\documentclass[a4paper, 11pt]{article}
\usepackage[utf8]{inputenc}
\usepackage[margin=1in]{geometry} 
\usepackage{amsmath,amsthm,amssymb,amsfonts,amscd}
\usepackage{mathtools}
\usepackage{enumitem}
\usepackage{indentfirst}
\usepackage{thmtools}
\usepackage{thm-restate}
\usepackage[numbers]{natbib}
\usepackage[affil-it]{authblk}
\setlist[enumerate,1]{label={(\alph*)}}
\setlength\parindent{0.6cm}

\usepackage{etoolbox}
\makeatletter
\pretocmd\@maketitle
  {\def\@makefnmark{\hbox{\@textsuperscript{\normalfont\@thefnmark}}}}
  {}{\FAIL}
\makeatother

\title{Eigenvalue Paths Arising From Matrix Paths}
\author{
    Eric Jankowski\,\textsuperscript{a,}\thanks{Corresponding author. Postal address: 1935 Addison St \#214, Berkeley, CA 94704}  and Charles R.\ Johnson\,\textsuperscript{b} \\
    \textsuperscript{a,b}\,Department of Mathematics \\
    College of William \& Mary \\
    P.O.\ Box 8795 \\
    Williamsburg, VA 23187}
\date{\today}

\newcommand{\N}{\mathbb{N}}

\newcommand{\R}{\mathbb{R}}
\newcommand{\C}{\mathbb{C}}
\newcommand{\CC}{\mathcal{C}}
\newcommand{\PP}{\mathcal{P}}
\newcommand{\SSS}{\mathcal{S}}
\newcommand{\HH}{\mathcal{H}}
\newcommand{\OO}{\mathcal{O}}

\newcommand{\spec}{\Sigma}

\DeclareMathOperator{\mult}{mult}
\DeclareMathOperator{\Tr}{Tr}

\theoremstyle{plain}
\newtheorem{Thm}{Theorem}
\newtheorem{Lemma}{Lemma}[section]
\newtheorem{Cor}[Lemma]{Corollary}
\newtheorem{Prop}[Lemma]{Proposition}
\theoremstyle{definition}
\newtheorem{Def}[Lemma]{Definition}

\newtheorem{Rmk}[Lemma]{Remark}

\newenvironment{usethmcounterof}[1]{%
  \Thm}{\endThm\addtocounter{Thm}{-1}}
  
\newcommand\blfootnote[1]{%
  \begingroup
  \renewcommand\thefootnote{}\footnote{#1}%
  \addtocounter{footnote}{-1}%
  \endgroup
}

\begin{document}
\maketitle

\blfootnote{E-mail addresses: ejankowsi@berkeley.edu (E.\ Jankowski), crjohn@wm.edu (C.\ R.\ Johnson)}
\blfootnote{Declarations of interest: none.}
\blfootnote{This work was supported by the 2019 National Science Foundation grant DMS \#1757603.}

\begin{abstract}
It is known (see e.g.\ \cite{Bounds}, \cite{RootContinuity}, \cite{MAnalysis}, \cite{PertTheoryKato}) that continuous variations in the entries of a complex square matrix induce continuous variations in its eigenvalues. If such a variation arises from one real parameter $\alpha \in [0,1]$, then the eigenvalues follow continuous paths in the complex plane as $\alpha$ shifts from $0$ to $1$. The intent here is to study the nature of these \emph{eigenpaths}, including their behavior under small perturbations of the matrix variations, as well as the resulting \emph{eigenpairings} of the matrices that occur at $\alpha = 0$ and $\alpha=1$. We also give analogs of our results in the setting of monic polynomials.
\end{abstract}

\noindent\textbf{Keywords:} Eigenvalue paths, Eigenvalue perturbations, Matrix perturbation theory, Matrix-valued functions, Non-analytic perturbations, Operator-valued functions

\noindent\textbf{2010 AMS Subject Classification:} 15A18, 47A10, 47A55, 47A56, 47A75


\section{Introduction}\label{SectionIntro}
We are interested in continuous paths through the space $M_n$ of $n$-by-$n$ complex matrices and the variations in eigenvalues along these paths. To this end, a \emph{matrix path} will refer to a continuous function $\CC : [0,1] \to M_n$, where we use $\CC_\alpha$ to represent its value at $\alpha \in [0,1]$. Unless otherwise stated, we assume $A=\CC_0$ and $B=\CC_1$. As proven in Theorem 2.5.2 of \cite{PertTheoryKato}, for such a path $\CC$ through $M_n$, there exists a continuous parameterization of the eigenvalues along this path. That is, there are $n$ continuous functions $\gamma_1, ..., \gamma_n : [0,1] \to \C$ such that $\spec(\CC_\alpha) = \{\gamma_1(\alpha), ..., \gamma_n(\alpha)\}$ for all $\alpha \in [0,1]$, where we use $\spec(A)$ to denote the size-$n$ multiset spectrum of a matrix $A \in M_n$.

It can be the case that the parameterization $\gamma_1, ..., \gamma_n$ is not unique (e.g.\ if two distinct paths $\gamma_i, \gamma_j$ intersect at some point but differ on either side of that point). This case is of particular interest, since small perturbations of the matrix path $\CC$ often ``break apart" intersecting paths, thereby removing these points of intersection. We analyze perturbations of this type with the aim of showing that any parameterization of the perturbed spectrum is close to some parameterization $\gamma_1, ..., \gamma_n$ of the initial spectrum $\spec(\CC_\alpha)$, and further that there is no ``canonical" parameterization of $\spec(\CC_\alpha)$.

Of special interest is the convex case, in which the matrix path in consideration is given by $(1-\alpha)A + \alpha B$ for $A, B \in M_n$. This matrix path has many notable applications to quantum physics (see e.g.\ \cite{Physics1}, \cite{Physics2}), motivating some of our discussion of this subject.

We begin with the following definitions that guide our analysis.

\begin{Def}
Given a matrix path $\CC$, choose $n$ continuous functions $\gamma_1, ..., \gamma_n : [0, 1] \to \C$, possibly not all distinct, such that the multiset equality
\begin{align}
\spec(\CC_\alpha) = \{\gamma_1(\alpha), ..., \gamma_n(\alpha)\} \label{EigenPSDef}
\end{align}
holds for all $\alpha \in [0,1]$. The multiset $\{\gamma_1, ..., \gamma_n\}$ of paths is called a \emph{$\CC$-eigenpath set}, and each element is a \emph{$\CC$-eigenpath}.
\end{Def}

\begin{Def}
Given a $\CC$-eigenpath set $\{\gamma_1, ..., \gamma_n\}$, we may define a bijection $p : \spec(A) \to \spec(B)$ by $\gamma_j(0) \mapsto \gamma_j(1)$ for $j=1,...,n$. We call a bijection induced by these paths a \emph{$\CC$-eigenpairing} of the eigenvalues of $A$ and $B$. If there is exactly one $\CC$-eigenpairing, we say that it is \emph{unambiguous}.
\end{Def}

\begin{Rmk}
Notice that if $A$ (or $B$) has a repeated eigenvalue $\lambda$, then there cannot be an unambiguous eigenpairing due to the distinction we make between the first and second occurrences of $\lambda$ in the multiset $\spec(A)$.
\end{Rmk}

\begin{Def}
The \emph{$\CC$-eigenregion}, denoted by $E_\CC$, is the set of all eigenvalues realized by the matrices $\CC_\alpha$, each adorned with the parameter $\alpha \in [0,1]$ of the corresponding matrix. That is, we may write
\begin{align}
    E_\CC = \bigcup_{\alpha \in [0, 1]} \{(\lambda, \alpha) \mid \lambda \in \sigma(\CC_\alpha)\}
\end{align}
so $E_\CC \subseteq \C \times [0, 1]$ with the relative topology inherited from the standard product topology.
\end{Def}

\begin{Def}
We say that the point $(\lambda, \alpha) \in E_\CC$ is an \emph{ambiguity} if $\lambda$ is a repeated eigenvalue of $C_{\alpha}$. If $\mult(\lambda, \alpha)$ denotes the multiplicity of $\lambda$ in $\spec(\CC_\alpha)$, we say that an ambiguity $(\lambda_0, \alpha_0) \in E_\CC$ is \emph{singular} if for all open neighborhoods $\OO \subseteq E_\CC$ of $(\lambda_0, \alpha_0)$, there is a $(\lambda, \alpha) \in \OO$ such that $\mult(\lambda, \alpha) < \mult(\lambda_0, \alpha_0)$. Otherwise we say it is \emph{nonsingular}.
\end{Def}

\begin{Def}
We write $C_\alpha = (1-\alpha)A + \alpha B$ to denote the convex path from $A$ to $B$. Further, we will often use the prefix \emph{convex} (e.g.\ convex eigenpath set, convex eigenregion, etc.) when referring to objects induced by this matrix path.
\end{Def}

The overall structure of this paper will be as follows: In sections \ref{SectionBasicFacts} and \ref{SectionInvariants}, we present our basic results on eigenpaths, eigenpairings, and ambiguities. Many of these results will motivate definitions and aid us with later proofs. Section \ref{SectionAnalytic} contains a brief analysis of some crucial ideas from analytic perturbation theory that will later be applied to prove a theorem on non-analytic matrix paths (Theorem \ref{RippingThm}). The next four sections will be dedicated to proving the four theorems below.

Theorems \ref{ClosePathsThm} and \ref{RippingThm} characterize achievable eigenpath sets for matrix paths that are norm-close to a given matrix path $\CC$. In particular, we see that eigenpaths are rather well-behaved under small perturbations of $\CC$. These first two results are especially practical when dealing with matrix paths that exhibit undesirable behavior at infinitely many points or intervals, as evidenced by the application of Theorem \ref{ClosePathsThm} to our proof of Theorem \ref{ConvexReductionThm}.

In these theorems, as well as in the rest of this paper, we use the generalized matrix norm $\|A\| = \max_{i,j} |a_{i,j}|$. Recall that this norm is equivalent to all other (generalized) matrix norms; our choice is purely for convenience.

\begin{restatable}{Thm}{ClosePathsThm}\label{ClosePathsThm}
Let $\varepsilon > 0$ and let $\CC$ be a matrix path. Then there is a $\delta > 0$ such that for any matrix path $\CC'$ with $\|\CC_\alpha - \CC_\alpha'\| < \delta$ for all $\alpha \in [0,1]$ and any $\CC'$-eigenpath set $\{\gamma_1', ..., \gamma_n'\}$, there is a $\CC$-eigenpath set $\{\gamma_1, ..., \gamma_n\}$ satisfying $|\gamma_j(\alpha) - \gamma_j'(\alpha)| < \varepsilon$ for all $\alpha \in [0,1]$.
\end{restatable}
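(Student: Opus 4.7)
The plan is to argue by contradiction via an Arzel\`a--Ascoli type compactness argument. Suppose the theorem fails; then there exist $\varepsilon > 0$ and a sequence of matrix paths $\CC^{(k)}$ with $\sup_{\alpha \in [0,1]} \|\CC_\alpha - \CC^{(k)}_\alpha\| < 1/k$, together with $\CC^{(k)}$-eigenpath sets $\{\gamma^{(k)}_1, \ldots, \gamma^{(k)}_n\}$, such that no $\CC$-eigenpath set lies uniformly within $\varepsilon$ of $\{\gamma^{(k)}_j\}_j$. I aim to extract a subsequence that converges uniformly to a $\CC$-eigenpath set, contradicting the choice of the sequence.

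First I would verify that the family $\{\gamma^{(k)}_j\}_{k,j}$ is uniformly bounded and equicontinuous on $[0,1]$. Uniform boundedness is immediate from $|\gamma^{(k)}_j(\alpha)| \le \rho(\CC^{(k)}_\alpha)$ together with the fact that $\CC^{(k)} \to \CC$ uniformly and $\CC$ is bounded on $[0,1]$. For equicontinuity I would invoke a continuity-of-roots estimate for monic polynomials (Ostrowski type, as in \cite{RootContinuity}): the characteristic polynomial coefficients are polynomial in the matrix entries, and the matrices $\CC^{(k)}_\alpha$ lie in a fixed compact subset of $M_n$. Combined with the triangle-style bound $\|\CC^{(k)}_\alpha - \CC^{(k)}_{\alpha'}\| \le \|\CC_\alpha - \CC_{\alpha'}\| + 2\sup_\beta \|\CC_\beta - \CC^{(k)}_\beta\|$, this yields a modulus of continuity for each $\gamma^{(k)}_j$ depending only on the modulus of continuity of $\CC$ plus a term vanishing in $k$, hence an equicontinuous family for all sufficiently large $k$.

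Applying Arzel\`a--Ascoli componentwise then produces a subsequence $\gamma^{(k_l)}_j \to \tilde\gamma_j$ uniformly on $[0,1]$ for each $j$, with each $\tilde\gamma_j$ continuous. For every $\alpha$ the multiset identity $\{\gamma^{(k_l)}_j(\alpha)\}_j = \spec(\CC^{(k_l)}_\alpha)$ passes to the limit (using once more that $\spec(\CC^{(k)}_\alpha) \to \spec(\CC_\alpha)$ as multisets, by root continuity applied pointwise in the matrix entries) to give $\{\tilde\gamma_1(\alpha), \ldots, \tilde\gamma_n(\alpha)\} = \spec(\CC_\alpha)$, so $\{\tilde\gamma_1, \ldots, \tilde\gamma_n\}$ is a $\CC$-eigenpath set. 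Uniform convergence then forces $|\gamma^{(k_l)}_j(\alpha) - \tilde\gamma_j(\alpha)| < \varepsilon$ for all $\alpha, j$ once $l$ is large, contradicting the choice of the sequence.

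The step I expect to be the main obstacle is the equicontinuity, because one has to control the modulus of an individual continuous branch $\gamma^{(k)}_j$ rather than merely the Hausdorff distance between the whole spectra $\spec(\CC^{(k)}_\alpha)$ and $\spec(\CC^{(k)}_{\alpha'})$ (which, as the example of diagonal matrices with eigenvalues that ``swap labels'' across a crossing shows, does not on its own control the individual branches). The key observation that makes it work is that on sufficiently small parameter intervals any continuous lift of the spectrum must follow the optimal bijective matching between the two nearby multisets, so the Ostrowski-type root-continuity estimate transfers to each individual eigenpath; the uniform norm bound on the $\CC^{(k)}$ then makes this bound uniform in $k$, which is what the Arzel\`a--Ascoli argument requires.
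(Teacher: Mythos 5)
Your Arzel\`a--Ascoli argument is a genuinely different route from the paper's proof. The paper proceeds constructively: it builds a finite cover $\OO_1, \ldots, \OO_r$ of the singular ambiguities (Lemma \ref{FiniteCoverLem}), derives an explicit $\delta$ from the separation parameters of that cover together with the quantitative bound in Lemma \ref{EigenvalueDeltaLem}, and then assembles the required $\CC$-eigenpath set segment by segment. Your proof instead argues by contradiction and extracts a uniformly convergent subsequence of eigenpaths; it is softer, trades an explicit $\delta$ for a cleaner structure, and sidesteps the combinatorics of the cover entirely. Both approaches rely on an Ostrowski/Elsner-type continuity-of-roots estimate as the only quantitative input.

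The one step that needs repair is the equicontinuity justification. The assertion that ``on sufficiently small parameter intervals any continuous lift of the spectrum must follow the optimal bijective matching'' is not quite right: at or near a crossing a continuous branch is free to pass to either emerging eigenvalue, so the branch-induced pairing between $\spec(\CC^{(k)}_\alpha)$ and $\spec(\CC^{(k)}_{\alpha'})$ can differ from the optimal one, and composing near-optimal matchings along a chain of small steps does not by itself control the accumulated drift. What does work is a covering argument. Fix $\eta > 0$ and let $D$ bound the Hausdorff distance between $\spec(\CC^{(k)}_t)$ and $\spec(\CC^{(k)}_{t'})$ whenever $|t - t'| \leq \eta$; by the root-continuity estimate $D \to 0$ as $\eta \to 0$ uniformly in $k$, since the $\CC^{(k)}_\alpha$ lie in a fixed compact set and the paths $\CC^{(k)}$ share, up to a term vanishing in $k$, the modulus of continuity of $\CC$. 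For $|\alpha - \alpha'| \leq \eta$, the image of $\gamma^{(k)}_j$ restricted to $[\alpha, \alpha']$ is a connected subset of the closed $D$-neighborhood of $\spec(\CC^{(k)}_\alpha)$, which is a union of at most $n$ disks of radius $D$ whose connected components have diameter at most $2nD$. Hence $|\gamma^{(k)}_j(\alpha) - \gamma^{(k)}_j(\alpha')| \leq 2nD$, which is the uniform modulus you need. With that substitution the rest of your argument --- the diagonal extraction, passage of the multiset identity to the limit, and the final contradiction --- goes through.
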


\begin{restatable}{Thm}{RippingThm}\label{RippingThm}
Let $\varepsilon > 0$ and let $\CC$ be a matrix path with $\CC$-eigenpath set $\{\gamma_1, ..., \gamma_n\}$. Then there is a matrix path $\CC'$ admitting a unique $\CC'$-eigenpath set $\{\gamma_1', ..., \gamma_n'\}$ such that $\|\CC_\alpha - \CC_\alpha'\| < \varepsilon$ and $|\gamma_j(\alpha) - \gamma_j'(\alpha)| < \varepsilon$ for all $\alpha \in [0,1]$ and $j=1,...,n$.
\end{restatable}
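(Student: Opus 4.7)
The plan is to first approximate the (possibly nonsmooth) matrix path $\CC$ by a polynomial path $\tilde{\CC}$, then exploit analyticity to reduce the coincidence set of eigenvalues to finitely many points, and finally ``rip apart'' the eigenpaths at each such point by a small localized perturbation chosen to be consistent with the prescribed eigenpath set $\{\gamma_1,\ldots,\gamma_n\}$.

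Concretely, I would first apply Theorem~\ref{ClosePathsThm} to fix a tolerance $\delta > 0$ such that every eigenpath set of every matrix path within $\delta$ of $\CC$ lies pointwise within $\varepsilon/2$ of some $\CC$-eigenpath set. Then, using the Weierstrass approximation theorem entrywise on $[0,1]$, I would produce a polynomial matrix path $\tilde{\CC}$ with $\|\CC_\alpha - \tilde{\CC}_\alpha\| < \delta/3$ for all $\alpha$. Because $\tilde{\CC}$ is polynomial in $\alpha$, the coefficients of its characteristic polynomial, and hence the discriminant $\Delta(\alpha)$, are polynomials in $\alpha$. If $\Delta \equiv 0$ I would first add a small generic analytic perturbation (e.g.\ $\eta\alpha\,\mathrm{diag}(1,2,\ldots,n)$) of norm at most $\delta/3$ to force $\Delta \not\equiv 0$; then $\Delta$ has only finitely many roots $\alpha_1 < \cdots < \alpha_m$ in $[0,1]$, and these are the only $\alpha$ at which eigenvalues of $\tilde{\CC}_\alpha$ coincide.

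Next, I would invoke the analytic perturbation theory summarized in Section~\ref{SectionAnalytic} to get, in a neighborhood of each $\alpha_k$, a Puiseux-series description of the colliding eigenvalues of $\tilde{\CC}_\alpha$. For each $\alpha_k$ I would add a bump-supported term $\eta_k\chi_k(\alpha)E_k$ with $\chi_k$ a continuous cutoff near $\alpha_k$ and $E_k$ a small matrix whose first-order effect on the degenerate eigenspace splits the coinciding eigenvalues in the complex plane. The \emph{direction} of this split (prescribed by $E_k$) is precisely what selects which incoming eigenpath of $\tilde{\CC}$ should be identified with which outgoing one across $\alpha_k$; I would choose $E_k$ so that the resulting pairing matches the pairing implied by the prescribed $\{\gamma_j\}$ at $\alpha_k$. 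Choosing each $\eta_k$ small enough, the aggregate perturbation keeps us within $\delta$ of $\CC$, and because each $\alpha_k$ has been desingularized, the resulting $\CC'$ has simple spectrum for every $\alpha \in [0,1]$. Simplicity forces the eigenpath set of $\CC'$ to be unique, and by Theorem~\ref{ClosePathsThm} that unique set lies within $\varepsilon/2$ of some $\CC$-eigenpath set, which by our matched choice of ripping directions must be $\{\gamma_j\}$ itself (up to $\varepsilon$).

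The main obstacle I expect is choosing the ripping matrices $E_k$ correctly. In the semisimple case the first-order eigenvalue splittings are given by the eigenvalues of the restriction of $E_k$ to the degenerate eigenspace, so $E_k$ can be read off directly from the local behavior of the $\gamma_j$ on either side of $\alpha_k$; but in the non-semisimple (Jordan) case the relevant splittings are governed by higher-order Puiseux coefficients, and the bookkeeping needed to ensure the ``correct'' cyclic pairing of branches is the most delicate part of the argument. Once these local constructions are carried out and glued together using the cutoffs $\chi_k$, verifying $\|\CC_\alpha - \CC'_\alpha\| < \varepsilon$, uniqueness, and $|\gamma_j(\alpha) - \gamma_j'(\alpha)| < \varepsilon$ is a matter of combining the estimates from Theorem~\ref{ClosePathsThm} with the smallness of each $\eta_k$.
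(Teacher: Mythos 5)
Your skeleton matches the paper's: use Weierstrass to approximate $\CC$ by a polynomial path whose ambiguities occur at finitely many points $\alpha_1,\ldots,\alpha_m$, then locally modify the path near each $\alpha_k$ to remove the degeneracy while realizing the pairing dictated by $\{\gamma_j\}$. The divergence --- and the gap --- is in the local step.

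You propose to add a bump-supported perturbation $\eta_k\chi_k(\alpha)E_k$ and to choose $E_k$ so that the first-order splitting directions at $\alpha_k$ ``select which incoming eigenpath should be identified with which outgoing one.'' This is not correct as stated: the pairing across $\alpha_k$ is not a local datum of the splitting at $\alpha_k$ but a global datum of the branches over the whole support of $\chi_k$ --- as $\chi_k$ ramps up and back down, the branches can re-coalesce in the complex plane and exchange connections in a way the first-order split at $\alpha_k$ does not see. More seriously, you never show that \emph{every} prescribed reconnection of $m\ge 3$ colliding branches (or of a Jordan cluster) is realized by \emph{some} admissible $E_k$; the Puiseux-cycle structure constrains which reconnections a given perturbation can produce, and you explicitly flag this bookkeeping as the hard part without resolving it. As written, the proposal defers exactly the step that carries the content of the theorem.

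The paper's Lemma~\ref{PolynomialRippingLem} avoids both issues by \emph{replacing} the path, not perturbing it, on a short interval $[\alpha_k^-,\alpha_k^+]$. In a basis where $\PP_{\alpha_k^-}$ is a diagonal matrix $D$, one runs an ambiguity-free near-convex path from $D$ to $\PP_{\alpha_k^+}$ (Lemma~\ref{ConvexApproxLem}), which realizes \emph{some} unambiguous pairing $p$; one then prepends a loop from $D$ to itself, built from the rotation blocks of Lemma~\ref{DiagonalPairingLem}, realizing a correcting permutation $\tau$ chosen so that $p\circ\tau$ is exactly the prescribed pairing. Because any permutation of $\tfrac{\varepsilon}{2}$-close eigenvalues factors into transpositions and Lemma~\ref{DiagonalPairingLem} achieves each transposition by a small-norm loop, this construction is \emph{guaranteed} to hit any prescribed pairing --- which is precisely the control your perturbative choice of $E_k$ would have to supply but does not. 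If you want to keep the perturbative route, the thing to prove is a surjectivity statement: that the map from admissible local perturbations to reconnection permutations of the colliding branches is onto; the paper substitutes an explicit construction for that surjectivity.
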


Theorem $\ref{ConvexReductionThm}$ gives a condition under which we can expect eigenpairings of a particular type of matrix path to coincide with convex eigenpairings. Here we write $f \lor g$ to denote the least upper bound (i.e.\ pointwise maximum) of the functions $f,g : [0,1] \to \R$. This reduction is particularly useful in view of Theorem \ref{2x2Theorem}, which completely determines the convex eigenpairings in the $2$-by-$2$ case.

\begin{restatable}{Thm}{ConvexReductionThm}\label{ConvexReductionThm}
Let $f$ and $g$ be continuous functions $[0,1] \to \R$ satisfying $f(0) = g(1) = 1$ and $f(1) = g(0) = 0$ so that $\CC_\alpha = f(\alpha)A+g(\alpha)B$ is a matrix path from $A$ to $B$. If $(f \lor g)(\alpha) \geq 0$ for all $\alpha \in [0,1]$, then any convex eigenpairing $p$ is also a $\CC$-eigenpairing.
\end{restatable}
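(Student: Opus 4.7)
My approach is to construct a continuous homotopy $\CC^t$ of matrix paths interpolating between $\CC^0 = C$ (the convex path) and $\CC^1 = \CC$, and then to show, via Theorem~\ref{ClosePathsThm}, that the set of $t$ for which $p$ is a $\CC^t$-eigenpairing is both open and closed in $[0, 1]$, hence all of it.

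I would first set $F_t(\alpha) = (1-t)(1-\alpha) + tf(\alpha)$, $G_t(\alpha) = (1-t)\alpha + tg(\alpha)$, and $\CC^t_\alpha = F_t(\alpha)A + G_t(\alpha)B$. This family is jointly continuous in $(t, \alpha)$, interpolates between $C$ and $\CC$, and each $\CC^t$ is a matrix path from $A$ to $B$ (direct check at $\alpha = 0, 1$). Crucially, the hypothesis $(f \lor g)(\alpha) \geq 0$ combined with $1 - \alpha, \alpha \geq 0$ yields $(F_t \lor G_t)(\alpha) \geq 0$ for every $t$ and $\alpha$: if $f(\alpha) \geq 0$ then $F_t(\alpha) \geq 0$, and otherwise $g(\alpha) \geq 0$ and $G_t(\alpha) \geq 0$. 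So the homotopy remains within the class of paths to which the theorem's hypothesis applies.

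Next, I would define $T = \{t \in [0, 1] : p \text{ is a } \CC^t\text{-eigenpairing}\}$, so that $0 \in T$ by assumption. For openness at $t_0 \in T$, the plan is to fix a $\CC^{t_0}$-eigenpath set $\{\gamma_j^{t_0}\}$ inducing $p$ and apply Theorem~\ref{ClosePathsThm} (with $\CC = \CC^t$ and $\CC' = \CC^{t_0}$) to produce, for $t$ near $t_0$, a $\CC^t$-eigenpath set $\{\tilde\gamma_j^t\}$ with $|\tilde\gamma_j^t(\alpha) - \gamma_j^{t_0}(\alpha)| < \varepsilon$ uniformly in $\alpha, j$. The induced multiset of pairs $\{(\tilde\gamma_j^t(0), \tilde\gamma_j^t(1))\}$ lies in the finite (hence discrete) space of multisets of size $n$ over $\spec(A) \times \spec(B)$; being $\varepsilon$-close to $p$, it must actually equal $p$ once $\varepsilon$ is smaller than the minimum pairwise separation in this discrete space, giving $t \in T$. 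Closedness is symmetric, applying Theorem~\ref{ClosePathsThm} with $\CC = \CC^{t_0}$ fixed and $\CC' = \CC^{t_n}$ for a sequence $t_n \to t_0$ in $T$. Since $T$ will then be nonempty and clopen in the connected interval $[0, 1]$, it must equal $[0, 1]$, and in particular $1 \in T$, which is the desired conclusion.

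The main obstacle I anticipate is that the $\delta$ supplied by Theorem~\ref{ClosePathsThm} depends on the base path $\CC^t$, which itself varies with $t$; a uniform-in-$t$ version of the theorem on a compact neighborhood of $t_0$ is needed for the openness step, and this follows from a standard compactness argument combined with the continuity of $t \mapsto \CC^t$. A secondary subtlety is the treatment of repeated eigenvalues in $\spec(A)$ or $\spec(B)$: by interpreting an eigenpairing as a multiset of pairs rather than a bijection of labelled sets, the ``close implies equal'' step proceeds without modification.
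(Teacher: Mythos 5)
Your homotopy idea is creative and structurally clean, but there is a genuine gap in the openness step that cannot be patched by the compactness argument you sketch, and I think the gap is fatal rather than cosmetic. The machinery of Theorem~\ref{ClosePathsThm} (and its Corollary~\ref{CommonPairingCor}) has a built-in asymmetry: it says that for $\CC'$ sufficiently close to a \emph{fixed} base path $\CC$, every $\CC'$-eigenpairing is also a $\CC$-eigenpairing. This gives exactly the closedness of $T$ --- take $\CC=\CC^{t_0}$ and $\CC'=\CC^{t_n}$ with $t_n\to t_0$ --- but it points in the wrong direction for openness. To run your openness argument you must instead take $\CC=\CC^t$ as the base with $\CC'=\CC^{t_0}$, which requires $\|\CC^t_\alpha - \CC^{t_0}_\alpha\| < \delta(\CC^t)$ for all $\alpha$, and here $\delta(\CC^t)$ is controlled by the minimum separation between distinct connected components of $E_{\CC^t}$. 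If $\CC^{t_0}$ has an ambiguity that $\CC^t$ does not (i.e.\ a crossing is being created exactly at $t=t_0$), then this separation, and hence $\delta(\CC^t)$, shrinks to $0$ as $t\to t_0$, at a rate comparable to $\|\CC^t-\CC^{t_0}\|$ itself; the comparison you need is then a delicate race between two quantities of the same order, not something a generic compactness argument can settle. This is not merely a technical nuisance: for a \emph{general} homotopy $H_t$ it is simply false that the eigenpairing set can only grow as $t$ increases, so $T$ genuinely fails to be open. Any successful version of your argument must therefore invoke the hypothesis $(f\lor g)\geq 0$ in the openness step, yet your write-up verifies that each $\CC^t$ satisfies the hypothesis and then never touches it again; the clopen argument as written would ``prove'' the conclusion for arbitrary $f,g$ with the given boundary conditions, which is more than the theorem claims and is not supported by the paper's methods.

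For comparison, the paper takes a more elementary reduction route. It first proves (Lemma~\ref{ConeLem}) the easy case $f,g\geq 0$, where each $\CC_\alpha$ is a positive multiple of some convex combination and the eigenpath set is obtained by a continuous rescaling of the convex one. It then handles a single excursion of one coefficient into negatives (Lemma~\ref{ArcLem}) by observing that on such an interval the path is a positive $\mathbb{R}$-linear combination of $B$ and a well-chosen $C_{\alpha_0}$, so Lemma~\ref{ConeLem} applies in two pieces via concatenation (Lemma~\ref{Concatenation}). Finally, Theorem~\ref{ConvexReductionThm} is proved by using Corollary~\ref{CommonPairingCor} (the \emph{closedness} direction of Theorem~\ref{ClosePathsThm}, the one that actually works) to flatten shallow negative dips of $f$ and $g$ to zero, so that only finitely many ``deep'' negative intervals remain, and each of those is absorbed by Lemma~\ref{ArcLem}. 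This approach uses only the stable direction of Theorem~\ref{ClosePathsThm} and makes essential, localized use of the hypothesis to control where $f$ and $g$ can be simultaneously negative. If you want to salvage your homotopy argument, the missing ingredient is a proof that along your specific homotopy (which preserves $(F_t\lor G_t)\geq 0$), the eigenpairing set is in fact locally constant --- but that is a substantive claim, roughly of the same depth as the theorem itself.
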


\begin{restatable}{Thm}{2x2Theorem}\label{2x2Theorem}
Suppose that $A \in M_2$ has distinct eigenvalues $\lambda_1, \lambda_2$, and further that $B \in M_2$ has distinct eigenvalues $\mu_1,\mu_2$. Then the convex eigenpairings of $A$ and $B$ are determined entirely by the proximity of their eigenvectors and the quantity $\frac{\mu_1-\mu_2}{\lambda_1-\lambda_2}$.
\end{restatable}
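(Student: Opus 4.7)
The plan is to reduce the problem to an explicit computation in the eigenbasis of $A$. First I would simultaneously conjugate the matrix path by a similarity so that $A = \mathrm{diag}(\lambda_1, \lambda_2)$; this preserves every spectrum $\spec(\CC_\alpha)$ and therefore preserves all eigenpaths and eigenpairings. Writing an eigenvector matrix of $B$ as $P = \begin{pmatrix} p & r \\ q & s \end{pmatrix}$ with $B = P\,\mathrm{diag}(\mu_1,\mu_2)\,P^{-1}$, the entries of $C_\alpha = (1-\alpha)A + \alpha B$ are determined, and its characteristic polynomial can be written out directly.

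Since $C_\alpha \in M_2$, its eigenvalues are $(\Tr(C_\alpha) \pm \sqrt{\Delta(\alpha)})/2$, where $\Delta(\alpha) = \Tr(C_\alpha)^2 - 4\det(C_\alpha)$ is the discriminant. A direct (if tedious) calculation produces the factorization
\[
\Delta(\alpha) \;=\; (\lambda_1 - \lambda_2)^2 \bigl[(1-\alpha)^2 + 2(1-\alpha)\alpha\,\rho\sigma + \alpha^2 \rho^2\bigr],
\]
where $\rho = (\mu_1 - \mu_2)/(\lambda_1 - \lambda_2)$ and $\sigma = (ps + qr)/(ps - qr)$. The quantity $\sigma$ is invariant under independent rescaling of the two columns of $P$, so it is a well-defined function of the two eigenlines of $B$ in $\mathbb{CP}^1$. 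It genuinely measures how close these lines are to the eigenlines of $A$ (the coordinate axes): $\sigma = 1$ iff some eigenvector pair is aligned ($q = 0$ or $r = 0$), $\sigma = -1$ iff some pair is swapped ($p = 0$ or $s = 0$), and in general $\sigma$ depends only on the product of slopes of $v_1$ off of $e_1$ and $v_2$ off of $e_2$. This is the precise sense in which $\sigma$ encodes the "proximity of eigenvectors."

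Finally, I would argue that the set of convex eigenpairings is determined by $\Delta(\alpha)$ alone. A convex eigenpath set consists of continuous selections of $\pm\sqrt{\Delta(\alpha)}$ added to $\Tr(C_\alpha)/2$. If $\Delta$ is nonvanishing on $[0,1]$, there is (up to global sign) a unique continuous branch of $\sqrt{\Delta}$ there, yielding a unique eigenpairing determined by whether the continuous extension of $\lambda_1 - \lambda_2$ terminates at $+(\mu_1-\mu_2)$ or $-(\mu_1-\mu_2)$. If $\Delta(\alpha_0) = 0$ for some $\alpha_0 \in [0,1]$, the eigenpaths may either cross or bounce off at $\alpha_0$, so both pairings are realizable. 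In either case the pairings depend on $\CC$ only through $\Delta(\alpha)/(\lambda_1-\lambda_2)^2$, which by the factorization depends only on $\rho$ and $\sigma$.

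The main obstacle is conceptual rather than computational: making the informal phrase "proximity of eigenvectors" precise by pinpointing the basis-invariant parameter $\sigma$ as the right quantity, and verifying cleanly that ambiguities in the eigenpairing correspond exactly to zeros of $\Delta$ on $[0,1]$ (so that no other feature of $\CC$, such as the trace data, can affect which pairings occur). The discriminant factorization itself is a short algebraic exercise once $B$ has been expressed through $P$ and $\mathrm{diag}(\mu_1,\mu_2)$.
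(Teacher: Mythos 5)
Your proposal is correct and follows essentially the same route as the paper: diagonalize $A$, compute the discriminant of the characteristic polynomial of $C_\alpha$, and analyze the continuous square-root branches to determine the pairing. Your parameterization via a general eigenvector matrix $P$ and the invariant $\sigma = (ps+qr)/(ps-qr)$ is equivalent to the paper's $\frac{v_1+v_2}{v_1-v_2}$ (after writing the eigenvectors of $B$ as $\binom{v_i}{1}$) but handles the degenerate eigenvector case $\binom{v}{0}$ more uniformly; the paper goes further and extracts from this same discriminant an explicit angular threshold $\theta$ classifying exactly when $p$, $q$, or both occur, which your argument identifies implicitly through the vanishing locus of $\Delta$ on $[0,1]$.
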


In the final section, we give analogs of Theorems 1-3 for paths of polynomials and their corresponding paths of roots.


\section{Some Basic Facts}\label{SectionBasicFacts}

First we give some motivation to the definition of an ambiguity.

\begin{Prop}\label{AmbiguityProp}
The following are equivalent:
\begin{enumerate}
    \item There is an unambiguous $\CC$-eigenpairing.
    
    \item $\CC_\alpha$ has $n$ distinct eigenvalues for all $\alpha \in [0, 1]$.
    
    \item $E_\CC$ contains no ambiguities.
    
    \item $E_\CC$ consists of $n$ connected components.
\end{enumerate}
\end{Prop}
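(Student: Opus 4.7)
My plan is to prove (2) $\Leftrightarrow$ (3), (2) $\Leftrightarrow$ (4), and (2) $\Leftrightarrow$ (1), which together yield the full four-way equivalence. The first is immediate from the definition of an ambiguity: $E_\CC$ contains an ambiguity precisely when some $\CC_\alpha$ has a repeated eigenvalue, which is the negation of (2).

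For (2) $\Leftrightarrow$ (4), I would fix any $\CC$-eigenpath set $\{\gamma_1, \dots, \gamma_n\}$ and form the images $K_j = \{(\gamma_j(\alpha), \alpha) : \alpha \in [0,1]\}$, each compact and connected as a continuous image of $[0,1]$, with $E_\CC = \bigcup_j K_j$. Assuming (2), the $K_j$ are pairwise disjoint, so each $K_j$ is closed in $E_\CC$ and its complement $\bigcup_{i \neq j} K_i$ is also closed; thus $K_j$ is clopen in $E_\CC$ and hence a connected component, so $E_\CC$ has exactly $n$ components. Conversely, if (2) fails then any eigenpath set contains two paths meeting at an ambiguity, so $K_i \cap K_j \neq \emptyset$ for some $i \neq j$, making $K_i \cup K_j$ connected and forcing $E_\CC$ to have at most $n-1$ components.

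For (2) $\Rightarrow$ (1), under (2) any two $\CC$-eigenpath sets $\{\gamma_j\}$ and $\{\tilde\gamma_k\}$ differ only by a constant relabeling: for each $k$, the function sending $\alpha$ to the unique index $j$ with $\gamma_j(\alpha) = \tilde\gamma_k(\alpha)$ is well-defined (by distinctness at each $\alpha$) and locally constant (by continuity), hence constant, so both sets induce the same bijection $\gamma_j(0) \mapsto \gamma_j(1)$. For the converse, I would argue contrapositively: if $A$ or $B$ has a repeated eigenvalue the Remark already supplies two distinct pairings. Otherwise, $A$ and $B$ have distinct spectra but some interior $\CC_{\alpha_0}$ has an ambiguity $\lambda_0$; two paths $\gamma_i, \gamma_j$ in any eigenpath set meet there with $\gamma_i(1) \neq \gamma_j(1)$ (using distinctness of $\spec(B)$), and swapping them past $\alpha_0$ — defining $\tilde\gamma_i(\alpha) = \gamma_i(\alpha)$ for $\alpha \leq \alpha_0$ and $\tilde\gamma_i(\alpha) = \gamma_j(\alpha)$ for $\alpha \geq \alpha_0$, with $\tilde\gamma_j$ defined symmetrically — yields a new eigenpath set whose induced bijection sends $\gamma_i(0) \mapsto \gamma_j(1) \neq \gamma_i(1)$, a different pairing.

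The most delicate step will be this last construction. I must verify that the swap preserves continuity (immediate from agreement at $\alpha_0$), satisfies the multiset condition (immediate since swapping happens within a common fiber), and genuinely changes the induced pairing — the last point rests crucially on $\gamma_i(1) \neq \gamma_j(1)$, i.e., on the sub-case assumption that $\spec(B)$ is distinct, so carefully isolating the three sub-cases of the contrapositive (repeated eigenvalue in $A$, in $B$, or only in the interior) is essential.
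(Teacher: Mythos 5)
Your proof is correct, and it reorganizes the implications into a different shape than the paper does: you prove all three equivalences by pivoting on condition (b) (distinct eigenvalues for every $\alpha$), whereas the paper runs a cycle (a)$\implies$(b)$\implies$(c)$\implies$(d)$\implies$(a). The contents overlap substantially --- your (1)$\implies$(2) via the path-swap construction is essentially the paper's (a)$\implies$(b), and (2)$\Leftrightarrow$(3) is the same definitional observation --- but there are two places where your route does genuine extra work. First, for (2)$\implies$(4) you observe that each graph $K_j$ is compact, connected, and (by pairwise disjointness) clopen in $E_\CC$, hence a full connected component; the paper establishes only the contrapositive direction (c)$\implies$(d) and relies on the cycle to close the loop. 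Second, and more substantively, your (2)$\implies$(1) argument --- showing that the index-matching map $\alpha\mapsto j$ with $\gamma_j(\alpha)=\tilde\gamma_k(\alpha)$ is well-defined and locally constant, hence constant --- makes rigorous something the paper states rather loosely in its (d)$\implies$(a) step (``Since this is the only choice of paths''). Your version actually proves the stronger fact that under (b) the eigenpath set itself is unique as a multiset of functions, not merely that the induced pairing is unique. Your care in isolating the three sub-cases of the contrapositive (repeated eigenvalue at $\alpha=0$, at $\alpha=1$, or strictly interior) and in noting that the swap only produces a genuinely new pairing because $\gamma_i(1)\neq\gamma_j(1)$ matches what the paper does implicitly when it ``therefore assume[s] that both $A$ and $B$ have $n$ distinct eigenvalues.''
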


\begin{proof}
We will first show that (a)$\implies$(b). Suppose that (b) does not hold, so some $\CC_\alpha$ has fewer than $n$ distinct eigenvalues. Notice that if $\alpha = 0$, then we may trivially find multiple pairings by switching which ``copy" of the repeated eigenvalue $\lambda \in \spec(A)$ we map to which value in $\spec(B)$. The same is true if $\alpha = 1$. We may therefore assume that both $A$ and $B$ have $n$ distinct eigenvalues. Then we may find two associated paths, say $\gamma_1$ and $\gamma_2$, such that $\gamma_1(\alpha) = \gamma_2(\alpha)$. We may replace these paths with $\gamma_1'$ and $\gamma_2'$ such that $\gamma_1'(t) = \gamma_1(t)$ for $t \leq \alpha$ and $\gamma_1'(t) = \gamma_2(t)$ for $t \geq \alpha$, and the opposite for $\gamma_2'$. This new collection of paths induces a new pairing mapping $\gamma_1(0) \mapsto \gamma_2(1)$ and $\gamma_2(0) \mapsto \gamma_1(1)$. Therefore there are multiple pairings in this case, so indeed (a)$\implies$(b).

By definition of ambiguities, we also have the implication (b)$\implies$(c). Now suppose that (d) holds, so the $j$th connected component may be described by a continuous path $\gamma_j$. Then the collection $\gamma_1, ..., \gamma_n$ of paths induces a single pairing $p$. Since this is the only choice of paths, we conclude that (d)$\implies$(a).

We are now reduced to showing that (c)$\implies$(d). Suppose that (d) does not hold, so there are some paths, say $\gamma_1$ and $\gamma_2$, that intersect at the same $\alpha$. Then $\CC_\alpha$ has a repeated eigenvalue of $\gamma_1(\alpha) = \gamma_2(\alpha)$, so we are finished.
\end{proof}

We will now demonstrate that the singular ambiguities of $E_\CC$ are compact, a fact that will be useful in Lemma \ref{FiniteCoverLem} and that proves a property (Corollary \ref{NSACor}) about points in $E_\CC$ that are not singular ambiguities. We begin with compactness of $E_\CC$ itself:

\begin{Lemma}\label{CompactERLem}
The eigenregion $E_\CC$ is a compact subset of $\C \times [0,1]$.
\end{Lemma}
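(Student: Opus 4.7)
The plan is to invoke the Heine--Borel theorem: since $\C \times [0,1]$ is a finite-dimensional metric space, compactness of $E_\CC$ reduces to showing it is closed and bounded.

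For boundedness, I would observe that $\CC:[0,1] \to M_n$ is continuous on a compact domain, so $\alpha \mapsto \|\CC_\alpha\|$ attains a finite upper bound $M$. Since our entrywise max norm is equivalent to any operator norm (up to a dimension-dependent constant), each eigenvalue of $\CC_\alpha$ is bounded in modulus by some multiple of $M$, uniformly in $\alpha$. Hence $E_\CC$ sits inside the compact set $\overline{D}_{nM}(0) \times [0,1] \subseteq \C \times [0,1]$.

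For closedness, I would take a sequence $(\lambda_k, \alpha_k) \in E_\CC$ converging to some $(\lambda_\infty, \alpha_\infty) \in \C \times [0,1]$ and show the limit lies in $E_\CC$. By definition, $\det(\lambda_k I - \CC_{\alpha_k}) = 0$ for every $k$. The determinant is a polynomial in the matrix entries and in $\lambda$, and $\CC$ is continuous in $\alpha$, so the map $(\lambda, \alpha) \mapsto \det(\lambda I - \CC_\alpha)$ is jointly continuous. Passing to the limit gives $\det(\lambda_\infty I - \CC_{\alpha_\infty}) = 0$, so $\lambda_\infty \in \spec(\CC_{\alpha_\infty})$ and therefore $(\lambda_\infty, \alpha_\infty) \in E_\CC$.

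There is essentially no real obstacle here; the only mild care required is in the boundedness step, where one must cite the equivalence of $\|\cdot\|$ with a genuine operator norm to bound eigenvalue moduli. Alternatively, one could bypass the Heine--Borel route altogether by invoking the continuous eigenvalue parameterization from Kato (Theorem 2.5.2 of \cite{PertTheoryKato}) already quoted in the introduction: writing $E_\CC = \bigcup_{j=1}^n \{(\gamma_j(\alpha), \alpha) : \alpha \in [0,1]\}$ expresses $E_\CC$ as a finite union of continuous images of $[0,1]$, each of which is compact. Either approach yields the result in a handful of lines.
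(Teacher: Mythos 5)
Your proof is correct, and your primary route differs from the paper's. The paper's own proof fixes a $\CC$-eigenpath set $\{\gamma_1,\dots,\gamma_n\}$ (from Kato's Theorem 2.5.2, quoted in the introduction) and writes $E_\CC$ as the finite union of the graphs $\{(\gamma_j(\alpha),\alpha):\alpha\in[0,1]\}$, each of which is compact because it is the continuous image of $[0,1]$ under $\alpha\mapsto(\gamma_j(\alpha),\alpha)$; this is precisely the alternative you sketch at the end. Your main argument instead goes through Heine--Borel directly: boundedness from the spectral-radius bound $|\lambda|\le c\,\|\CC_\alpha\|\le cM$, and closedness from the joint continuity of $(\lambda,\alpha)\mapsto\det(\lambda I-\CC_\alpha)$. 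The trade-off is that your characteristic-polynomial argument is more elementary and self-contained---it does not depend on the existence of a continuous eigenvalue parameterization at all, which is a nontrivial perturbation-theoretic input---while the paper's graph decomposition is shorter given that the eigenpath framework has already been established and is reused throughout. Both are valid; your version would even let one prove the compactness of $E_\CC$ independently and then use it as scaffolding en route to Kato's result, whereas the paper's version presupposes it.
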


\begin{proof}
Let $\{\gamma_1, ..., \gamma_n\}$, so $E_\CC$ is the union of the graphs of the $\gamma_j$. Since the $\gamma_j$ are continuous functions into the Hausdorff space $\C$, it follows that each graph is closed. Finally, since the domains of the $\gamma_j$ are compact, we find that each graph is bounded. Therefore $E_\CC$ is compact, as desired.
\end{proof}

\begin{Lemma}\label{CompactSALem}
The singular ambiguities of $E_\CC$ form a compact subset of $E_\CC$.
\end{Lemma}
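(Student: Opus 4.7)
The plan is to prove compactness of the set $S$ of singular ambiguities by showing that $S$ is closed in the compact ambient space $E_\CC$; Lemma \ref{CompactERLem} then finishes the argument. Concretely, I would take an arbitrary sequence $\{(\lambda_k, \alpha_k)\}_{k=1}^\infty \subseteq S$ converging to some $(\lambda_0, \alpha_0) \in E_\CC$, and verify that the limit is itself a singular ambiguity, i.e.\ that $\mult(\lambda_0, \alpha_0) \geq 2$ and that every neighborhood of $(\lambda_0, \alpha_0)$ in $E_\CC$ contains a point of strictly smaller multiplicity.

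For the ambiguity property, I would establish the following upper-semicontinuity statement: for any sufficiently small disk $D$ around $\lambda_0$, the number of eigenvalues of $\CC_\alpha$ lying in $D$, counted with multiplicity, equals $m := \mult(\lambda_0, \alpha_0)$ for all $\alpha$ sufficiently close to $\alpha_0$. This can be justified either through Rouché's theorem applied to the characteristic polynomial of $\CC_\alpha$, or more elementarily by fixing a continuous $\CC$-eigenpath set $\{\gamma_1, \ldots, \gamma_n\}$, passing to a subsequence on which the (finitely many) index pairs $(i,j)$ with $\gamma_i(\alpha_k)=\gamma_j(\alpha_k)=\lambda_k$ are constant, and evaluating at the limit. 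This yields both $\mult(\lambda_0,\alpha_0) \geq 2$ and, for all large $k$, the bound $\mult(\lambda_k,\alpha_k) \leq m$.

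For the singularity property, I would pick an arbitrary open neighborhood $\OO \subseteq E_\CC$ of $(\lambda_0, \alpha_0)$, and split into cases. For $k$ large enough, $(\lambda_k,\alpha_k) \in \OO$ and $\mult(\lambda_k,\alpha_k) \leq m$. If $\mult(\lambda_k,\alpha_k) < m$ for some such $k$, then $(\lambda_k,\alpha_k)$ itself witnesses singularity of $(\lambda_0,\alpha_0)$. Otherwise $\mult(\lambda_k,\alpha_k) = m$ for all large $k$, and the assumption that $(\lambda_k,\alpha_k) \in S$ provides, inside any small neighborhood of $(\lambda_k,\alpha_k)$ contained in $\OO$, a point whose multiplicity is strictly less than $m$; this is the required witness. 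I expect the main obstacle to be the upper-semicontinuity ingredient: without it, the limit multiplicity could a priori drop to $1$ (breaking the ambiguity property) or, worse, the relation between $\mult(\lambda_k,\alpha_k)$ and $m$ could be inverted in a way that invalidates the comparison needed to transport singularity from the sequence to the limit.
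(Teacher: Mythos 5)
Your proposal is correct and follows the same overall strategy as the paper: reduce compactness to closedness inside the compact ambient set $E_\CC$ (Lemma \ref{CompactERLem}), take a convergent sequence of singular ambiguities, and show the limit is again a singular ambiguity. The one genuine difference is how the multiplicity comparison is handled. You set $m := \mult(\lambda_0,\alpha_0)$ at the \emph{limit} and then prove upper semicontinuity ($\mult(\lambda_k,\alpha_k) \le m$ for large $k$, via Rouch\'e or via a pigeonhole/eigenpath argument), followed by a two-case analysis. The paper instead anchors $m$ to the \emph{sequence}: it takes $m$ to be the largest multiplicity occurring infinitely often among the $\mult(\lambda_k,\alpha_k)$, passes (by pigeonhole on the finitely many size-$m$ subsets of an eigenpath set) to a subsequence on which a fixed collection of $m$ eigenpaths all hit $\lambda_k$ at $\alpha_k$, and lets continuity yield $\mult(\lambda_0,\alpha_0) \ge m \ge 2$. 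Because a witness of multiplicity strictly less than $m$ is automatically strictly less than $\mult(\lambda_0,\alpha_0)$, the paper never needs upper semicontinuity at all and avoids your case split; only the easy ``lower-semicontinuity'' direction (eigenpaths passing through the sequence must pass through the limit) is used. Both arguments are sound; the paper's choice of $m$ is a small simplification that eliminates exactly the step you flagged as the main obstacle.
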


\begin{proof}
Let $Z$ be the set of singular ambiguities in $E_\CC$. By Lemma \ref{CompactERLem}, it suffices to show that $Z$ is closed. Suppose we have a sequence $\{(\lambda_k, \alpha_k)\}$ of elements of $Z$ that converges to some point $(\lambda, \alpha) \in \C \times [0,1]$. Let $m$ be the maximum integer such that $m=\mult(\lambda_k, \alpha_k)$ for infinitely many $k$. Then there must be a subset $\{\gamma_1, ..., \gamma_m\}$ of a $\CC$-eigenpath set that all intersect at infinitely many of these points. In particular, we may find a subsequence (which we also label $\{(\lambda_k, \alpha_k)\}$) satisfying $\gamma_j(\alpha_k) = \lambda_k$ for $k \in \N$ and $j=1,...,m$. Then by continuity of the $\gamma_j$, we also obtain $\gamma_j(\alpha) = \lambda$.

Now consider an open neighborhood $\OO$ of $(\lambda, \alpha)$. By our assumption on $m$, we know there is a $(\lambda_k, \alpha_k) \in \OO$ such that $\mult(\lambda_k, \alpha_k) = m$. Since $\OO$ is an open neighborhood of the singular ambiguity $(\lambda_k, \alpha_k)$, there must be some other point in $\OO$ with multiplicity less than $m$. Therefore $(\lambda, \alpha)$ is a singular ambiguity, so we are finished.
\end{proof}

\begin{Cor}\label{NSACor}
Suppose that $(\lambda_0, \alpha_0) \in E_\CC$ is not a singular ambiguity. Then there is an open neighborhood $\OO \subseteq E_\CC$ of $(\lambda_0, \alpha_0)$ that contains no singular ambiguities.
\end{Cor}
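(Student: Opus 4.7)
The corollary should be a short, essentially formal consequence of the preceding lemma, so my plan is to rely almost entirely on Lemma \ref{CompactSALem} together with basic point-set topology.

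First I would let $Z \subseteq E_\CC$ denote the set of singular ambiguities. By Lemma \ref{CompactSALem}, $Z$ is compact. Since the ambient space $\C \times [0,1]$ is Hausdorff, compactness of $Z$ implies $Z$ is closed in $\C \times [0,1]$, and therefore also closed in $E_\CC$ under the subspace topology (since the intersection of a closed set with $E_\CC$ is closed in $E_\CC$, and $Z \subseteq E_\CC$ already).

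Next I would pass to the complement: $\OO := E_\CC \setminus Z$ is open in $E_\CC$. By hypothesis, $(\lambda_0, \alpha_0)$ is not a singular ambiguity, so $(\lambda_0, \alpha_0) \in \OO$. This $\OO$ is an open neighborhood of $(\lambda_0, \alpha_0)$ in $E_\CC$, and by construction it contains no singular ambiguities, which completes the proof.

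There is essentially no hard step here: the content of the corollary is just the reformulation ``the complement of a closed set is open,'' and the work of showing that $Z$ is closed was already carried out in Lemma \ref{CompactSALem}. The only thing to be mildly careful about is distinguishing between closedness of $Z$ in $\C\times[0,1]$ versus in $E_\CC$, but since one implies the other, no real obstacle arises.
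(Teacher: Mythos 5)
Your proof is correct and rests on the same key fact as the paper's: the set of singular ambiguities is closed (Lemma \ref{CompactSALem}). The paper phrases this via a sequence/contradiction argument while you simply take the complement of the closed set directly, but these are the same argument in different words.
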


\begin{proof}
If not, then there would be a sequence of singular ambiguities converging to $(\lambda_0, \alpha_0)$, in which case it too would be a singular ambiguity.
\end{proof}

The following proposition provides useful information about how we may construct a $\CC$-eigenpath set. In particular, we may simply choose one eigenpath at a time until property \eqref{EigenPSDef} is satisfied. Additionally, it confirms that any continuous function $\gamma : [0,1] \to \C$ such that $\gamma(\alpha) \in \spec(\CC_\alpha)$ for all $\alpha \in [0,1]$ is indeed a $\CC$-eigenpath.

\begin{Prop}\label{PathExtProp}
For $k \leq n$, suppose there are $k$ continuous functions $\gamma_1, ..., \gamma_k : [0,1] \to \C$ such that
\begin{align}
\spec(\CC_\alpha) \supseteq \{\gamma_1(\alpha), ..., \gamma_k(\alpha)\} \label{PEigenPSDef}
\end{align}
as a multiset for all $\alpha \in [0,1]$. Then there are continuous functions $\gamma_{k+1}, ..., \gamma_n : [0,1] \to \C$ such that $\{\gamma_1, ..., \gamma_n\}$ is a $\CC$-eigenpath set.
\end{Prop}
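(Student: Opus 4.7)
The plan is to reduce the proposition to Kato's theorem applied to an auxiliary $(n-k)$-by-$(n-k)$ matrix path whose spectrum is precisely the ``leftover'' part of $\spec(\CC_\alpha)$ after removing $\gamma_1(\alpha), \ldots, \gamma_k(\alpha)$. First I would form the characteristic polynomial $p(\alpha, z) = \det(zI - \CC_\alpha)$, which is monic of degree $n$ in $z$ with coefficients depending continuously on $\alpha$ (they are, up to sign, sums of principal minors of $\CC_\alpha$). Alongside it I would form $q(\alpha, z) = \prod_{j=1}^{k}(z - \gamma_j(\alpha))$, monic of degree $k$ with coefficients continuous in $\alpha$ (elementary symmetric polynomials in the continuous $\gamma_j$). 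The multiset containment \eqref{PEigenPSDef} is precisely the statement that $q(\alpha, \cdot)$ divides $p(\alpha, \cdot)$ in $\C[z]$ for each $\alpha$, so the quotient $r(\alpha, z) := p(\alpha, z) / q(\alpha, z)$ is a well-defined monic polynomial of degree $n - k$.

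The key technical point is that the coefficients of $r$ depend continuously on $\alpha$. Since $q$ is monic (its leading coefficient never vanishes), the long-division algorithm expresses the coefficients of $r$ as continuous functions of those of $p$ and $q$, so $\alpha \mapsto r(\alpha, \cdot)$ is continuous. I would then let $\tilde\CC_\alpha$ denote the companion matrix of $r(\alpha, \cdot)$; this gives a continuous $(n-k)$-by-$(n-k)$ matrix path whose characteristic polynomial is $r(\alpha, \cdot)$ and whose spectrum (with multiplicity) is therefore the multiset $\spec(\CC_\alpha) \setminus \{\gamma_1(\alpha), \ldots, \gamma_k(\alpha)\}$.

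Finally I would invoke Kato's theorem (Theorem 2.5.2 of \cite{PertTheoryKato}) on $\tilde\CC$ to produce continuous functions $\gamma_{k+1}, \ldots, \gamma_n : [0,1] \to \C$ parameterizing $\spec(\tilde\CC_\alpha)$. By construction $\{\gamma_1, \ldots, \gamma_n\}$ then satisfies \eqref{EigenPSDef} and so is a $\CC$-eigenpath set. The only nontrivial step in this plan is the continuity of the quotient $r$, which rests on the fact that polynomial division by a divisor whose leading coefficient is bounded away from zero is a continuous operation; here the leading coefficient of $q$ is identically $1$, so this causes no issue.
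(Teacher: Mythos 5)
Your proof is correct, and it takes a genuinely different route from the paper's. The paper argues by induction, adding one continuous eigenpath at a time: it supposes no continuous $\gamma_{k+1}$ extending the collection exists, examines the least parameter $\alpha_0$ at which any candidate $\gamma_{k+1}$ must jump, counts how many paths among $\gamma_1,\dots,\gamma_k$ approach $\lim_{\alpha\uparrow\alpha_0}\gamma_{k+1}(\alpha)$ from each side, and derives a parity mismatch that would preclude \emph{any} $\CC$-eigenpath set, contradicting Kato's existence theorem. Your argument instead constructs all $n-k$ missing paths at once: you factor the characteristic polynomial as $p = q r$ with $q=\prod_{j\le k}(z-\gamma_j(\alpha))$, observe that monic division makes the coefficients of $r$ continuous (indeed polynomial) in those of $p$ and $q$, pass to the companion matrix path $\tilde\CC_\alpha$ of $r$, and invoke Kato directly on $\tilde\CC$. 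The trade-offs are real: the paper's approach stays at the level of eigenpaths and avoids companion matrices, but its justification that $\gamma_{k+1}$ ``may be chosen so that it is discontinuous at finitely many points'' is left rather informal; your approach buys a self-contained, readily checkable reduction to the one black box already cited in the paper (Kato's Theorem 2.5.2), at the modest cost of introducing the companion-matrix machinery. Both are valid; yours is arguably the more transparent argument.
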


\begin{proof}
By induction, it suffices to show for $k<n$ that there is a path $\gamma_{k+1}$ that preserves property \eqref{PEigenPSDef} when it is added to the union.

Suppose towards a contradiction that such a choice of $\gamma_{k+1}$ were not possible, so any such choice of a function $\gamma_{k+1}$ must be discontinuous. Since there are only finitely many paths in the set $\{\gamma_1, ..., \gamma_k\}$, we know that $\gamma_{k+1}$ may be chosen so that it is discontinuous at finitely many points. Let $\alpha_0 > 0$ be the least point at which $\gamma_{k+1}$ must be discontinuous, and suppose that $m$ of the paths $\gamma_j$ for $j=1, ..., k$ satisfy $\gamma_j(\alpha_0) = \lim_{\alpha \uparrow \alpha_0} \gamma_{k+1}(\alpha)$. Then any choice $\gamma_1',...\gamma_n'$ of $\CC$-eigenpaths must have $m+1$ that approach $\lim_{\alpha \uparrow \alpha_0} \gamma_{k+1}(\alpha)$ from the left at $\alpha_0$ and only $m$ that approach it from the right at $\alpha_0$. Therefore there can be no $\CC$-eigenpath set, a contradiction.
\end{proof}

By using the initial set $\{\gamma_1, ..., \gamma_k\}$ to determine a partial function $p : \spec(A) \to \spec(B)$ given by $p(\gamma_j(0)) = \gamma_j(1)$ for $j=1,...,k$, we can then use the extension from the previous lemma to extend $p$ to an $\CC$-eigenpairing:

\begin{Cor}\label{PairingExtCor} Suppose that $p$ is a partial bijection $\spec(A) \to \spec(B)$ induced by the continuous functions $\gamma_1, ..., \gamma_k$ satisfying \eqref{PEigenPSDef}. Then there is a $\CC$-eigenpairing that extends $p$. 
\end{Cor}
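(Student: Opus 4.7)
The plan is to reduce the corollary almost entirely to the previous proposition. Since $p$ is induced by continuous functions $\gamma_1,\dots,\gamma_k$ satisfying \eqref{PEigenPSDef}, the hypotheses of Proposition \ref{PathExtProp} are met, so I can extend this collection to continuous $\gamma_{k+1},\dots,\gamma_n:[0,1]\to\C$ so that $\{\gamma_1,\dots,\gamma_n\}$ forms a $\CC$-eigenpath set. This extended eigenpath set induces, by definition, a $\CC$-eigenpairing $\tilde p:\spec(A)\to\spec(B)$ via $\gamma_j(0)\mapsto\gamma_j(1)$ for $j=1,\dots,n$.

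The remaining task is to verify that $\tilde p$ extends $p$, which is essentially a matter of bookkeeping with multisets. By hypothesis, $p$ sends $\gamma_j(0)$ to $\gamma_j(1)$ for $j=1,\dots,k$, so on those specific multiset slots $\tilde p$ agrees with $p$ by construction. The fact that $p$ is given as a partial bijection guarantees that the $\gamma_j(0)$ correspond to distinct slots in $\spec(A)$ (with multiplicity) and similarly on the $B$ side, so appending the new paths $\gamma_{k+1},\dots,\gamma_n$ accounts for exactly the remaining slots of $\spec(A)$ and $\spec(B)$. Hence $\tilde p$ is a well-defined bijection of the full multisets, and its restriction to the domain of $p$ equals $p$.

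Since Proposition \ref{PathExtProp} does the heavy lifting, I do not anticipate any substantive obstacle here; the only care needed is the multiset interpretation in the previous paragraph, which is more notational than mathematical.
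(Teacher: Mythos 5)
Your proposal is correct and matches the paper's (implicit, one-line) argument: apply Proposition \ref{PathExtProp} to extend the partial collection of paths to a full $\CC$-eigenpath set, and read off the induced eigenpairing. The multiset bookkeeping you add is fine and faithful to what the paper leaves unstated.
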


\section{Invariants}\label{SectionInvariants}

There are several key invariants in the problem of finding the $\CC$-eigenregion and $\CC$-eigenpairings of $A$ and $B$. To simplify the proofs of our main results, we will first analyze these invariants. The first few results in this section allow us to modify the matrix path $\CC_\alpha$ in a consistent way and expect similar eigenpairings to occur.

\begin{Lemma}[Uniform Similarity]
Let $S \in M_n$ be nonsingular, and let $\CC^S$ be the matrix path given by $\alpha \mapsto S \CC_\alpha S^{-1}$. Then $E_{\CC^S} = E_\CC$.
\end{Lemma}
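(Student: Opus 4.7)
The plan is to reduce the statement to the elementary fact that similar matrices have identical spectra, then to translate that pointwise identity into an identity of the eigenregions.

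First, I would fix $\alpha \in [0,1]$ and compare characteristic polynomials. Since $\CC^S_\alpha = S\CC_\alpha S^{-1}$, a routine computation using multiplicativity of the determinant gives
\[
\det(\lambda I - S\CC_\alpha S^{-1}) = \det\bigl(S(\lambda I - \CC_\alpha)S^{-1}\bigr) = \det(\lambda I - \CC_\alpha),
\]
so $\CC^S_\alpha$ and $\CC_\alpha$ share the same characteristic polynomial. In particular, $\spec(\CC^S_\alpha) = \spec(\CC_\alpha)$ as size-$n$ multisets (not merely as sets), which is actually more than we need here.

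Second, I would plug this pointwise equality directly into the definition of the eigenregion. For each $\alpha \in [0,1]$, the slice $\{(\lambda,\alpha) \mid \lambda \in \spec(\CC^S_\alpha)\}$ coincides with $\{(\lambda,\alpha) \mid \lambda \in \spec(\CC_\alpha)\}$ as a subset of $\C \times [0,1]$. Taking the union over all $\alpha \in [0,1]$ then yields
\[
E_{\CC^S} = \bigcup_{\alpha \in [0,1]} \{(\lambda,\alpha) \mid \lambda \in \spec(\CC^S_\alpha)\} = \bigcup_{\alpha \in [0,1]} \{(\lambda,\alpha) \mid \lambda \in \spec(\CC_\alpha)\} = E_\CC,
\]
which is the claim.

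There is no real obstacle — the main thing to get right is simply to invoke similarity invariance of the characteristic polynomial at each $\alpha$ rather than arguing one eigenvalue at a time, so that multiplicities (and hence the multiset structure of $\spec(\CC_\alpha)$) are handled automatically. As a small bonus, the fact that the spectra agree as multisets means that $\mult(\lambda,\alpha)$ is preserved by the transformation $\CC \mapsto \CC^S$, so the singular/nonsingular classification of ambiguities in $E_\CC$ is invariant under uniform similarity as well — a fact one may wish to record for use later.
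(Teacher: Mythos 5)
Your proof is correct and follows essentially the same route as the paper's one-line argument: both rest on the observation that similar matrices share the same spectrum at each $\alpha$, from which the equality of eigenregions follows immediately. You merely make the underlying determinant computation and the multiset bookkeeping explicit, which is fine but not a different method.
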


\begin{proof}
Similar matrices share the same spectrum, so the spectra at each $\alpha$ coincide.
\end{proof}

Since the eigenregions in the above proposition are identical, the $\CC$-eigenpairings coincide with the $\CC^S$-eigenpairings. In the following lemma, we will see that the same is essentially true, with the caveat that each point must be scaled and shifted by some $a,b \in \C$.

\begin{Lemma}[Uniform Scaling and Shifting]\label{ScaleShiftInvLem}
Let $a,b \in \C$, and let $a\CC+bI$ be the matrix path given by $\alpha \mapsto a\CC_\alpha + bI$. Then
$$E_{a\CC+bI} = \big\{ (a\lambda+b, \alpha) \mid (\lambda, \alpha) \in E_\CC \big\}$$
\end{Lemma}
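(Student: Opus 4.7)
The plan is to reduce the statement to the elementary fact that scaling by $a$ and shifting by $b$ of a matrix transforms its spectrum by the map $\lambda \mapsto a\lambda + b$, preserving multiplicities. Once we verify this for each fixed $\alpha \in [0,1]$, the claim about $E_{a\CC+bI}$ follows immediately by taking the union over $\alpha$.

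First, I would fix $\alpha \in [0,1]$ and show the multiset equality
\[
\spec(a\CC_\alpha + bI) = \{a\lambda + b \mid \lambda \in \spec(\CC_\alpha)\}.
\]
For this, I would observe that if $\CC_\alpha v = \lambda v$ for some nonzero $v$, then
\[
(a\CC_\alpha + bI)v = a\lambda v + bv = (a\lambda + b)v,
\]
so every $a\lambda + b$ with $\lambda \in \spec(\CC_\alpha)$ is an eigenvalue of $a\CC_\alpha + bI$. To get multiplicities right and handle the reverse inclusion simultaneously, I would compare characteristic polynomials: a direct computation gives
\[
\det\bigl((a\CC_\alpha + bI) - tI\bigr) = a^n \det\!\left(\CC_\alpha - \tfrac{t-b}{a}I\right)
\]
when $a \neq 0$, so the roots of the left-hand polynomial are exactly $\{a\lambda + b : \lambda \in \spec(\CC_\alpha)\}$, counted with algebraic multiplicity. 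The degenerate case $a = 0$ is handled separately by noting that $a\CC_\alpha + bI = bI$ has $b$ as an eigenvalue of multiplicity $n$, which agrees with $\{a\lambda + b : \lambda \in \spec(\CC_\alpha)\} = \{b, b, \ldots, b\}$.

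Finally, I would unpack the definition of the eigenregion: a pair $(\mu, \alpha)$ lies in $E_{a\CC+bI}$ iff $\mu \in \spec(a\CC_\alpha + bI)$, which by the multiset equality above holds iff $\mu = a\lambda + b$ for some $\lambda \in \spec(\CC_\alpha)$, i.e.\ iff $(\mu, \alpha) = (a\lambda + b, \alpha)$ for some $(\lambda, \alpha) \in E_\CC$. Taking the union over $\alpha \in [0,1]$ yields the stated set equality. There is no real obstacle here; the only mild subtlety is being careful to track multiplicities (so that the multiset spectra, and not just the underlying sets, correspond correctly) and to cover the $a = 0$ case, but both are handled by the characteristic polynomial identity above.
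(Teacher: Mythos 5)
Your proof is correct and takes essentially the same approach as the paper's one-line argument, which simply observes that each eigenvalue of $a\CC_\alpha + bI$ is $a\lambda + b$ for some $\lambda \in \spec(\CC_\alpha)$. Your extra care with characteristic polynomials, multiplicities, and the $a=0$ case is sound but goes beyond what is needed, since $E_\CC$ is defined in terms of the set $\sigma(\CC_\alpha)$ rather than the multiset $\spec(\CC_\alpha)$, so multiplicities play no role in the stated set equality.
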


\begin{proof}
Each eigenvalue of $a\CC_\alpha+bI$ is $a\lambda+b$ for some $\lambda \in \spec(\CC_\alpha)$.
\end{proof}

In the convex case with matrix path $C_\alpha = (1-\alpha)A + \alpha B$, this lemma admits a slight generalization:

\begin{Lemma}[Convex Scaling and Shifting]\label{PosScalLem}
Let $c > 0$ and $d \in \C$, and denote by $C'$ the convex matrix path from $A$ to $cB+dI$. Then
$$E_{C'} = \Big\{ \big((1-\alpha + \alpha c)\lambda + \beta(\alpha) d, \alpha \big) \mid (\lambda, \beta(\alpha)) \in E_C \Big\}$$
where $\beta : [0, 1] \to [0, 1]$ is the strictly increasing smooth bijection $\alpha \mapsto \frac{\alpha c}{1-\alpha + \alpha c}$.
\end{Lemma}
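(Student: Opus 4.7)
The plan is to reparameterize the new convex path so that at each $\alpha$, the matrix $C'_\alpha$ differs from some $C_\beta$ by a uniform scaling and shift, and then invoke Lemma \ref{ScaleShiftInvLem} pointwise. Expanding, $C'_\alpha = (1-\alpha)A + \alpha(cB+dI) = (1-\alpha)A + \alpha c\,B + \alpha d\,I$, so I would look for scalars $s(\alpha), t(\alpha)$ and a new parameter $\beta(\alpha) \in [0,1]$ such that $C'_\alpha = s(\alpha)\,C_{\beta(\alpha)} + t(\alpha) I$. Matching the coefficients of $A$ and $B$ yields the linear system $s(1-\beta) = 1-\alpha$ and $s\beta = \alpha c$, which adds to $s(\alpha) = 1-\alpha+\alpha c$, and then dividing gives $\beta(\alpha) = \alpha c/(1-\alpha+\alpha c)$ exactly as stated. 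The constant part is forced by matching the $I$-coefficient.

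Next I would verify that $\beta : [0,1] \to [0,1]$ is the claimed smooth, strictly increasing bijection. Since $c > 0$ and $\alpha \in [0,1]$, the denominator $1-\alpha + \alpha c$ is strictly positive on $[0,1]$, so $\beta$ is smooth. A quotient-rule computation gives
\[
\beta'(\alpha) \;=\; \frac{c(1-\alpha+\alpha c) - \alpha c(c-1)}{(1-\alpha+\alpha c)^2} \;=\; \frac{c}{(1-\alpha+\alpha c)^2} \;>\; 0,
\]
so $\beta$ is strictly increasing; combined with $\beta(0)=0$ and $\beta(1)=1$, this makes $\beta$ a bijection.

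Finally I would read off the eigenregion identity. Applying Lemma \ref{ScaleShiftInvLem} to the uniform scaling and shift $C'_\alpha = s(\alpha)C_{\beta(\alpha)} + t(\alpha) I$ at each fixed $\alpha$ shows that $\spec(C'_\alpha)$ consists exactly of the values $s(\alpha)\lambda + t(\alpha) = (1-\alpha+\alpha c)\lambda + t(\alpha)$ as $\lambda$ ranges over $\spec(C_{\beta(\alpha)})$. Taking the union over $\alpha \in [0,1]$ and using that $\beta$ is a bijection (so the points $(\lambda, \beta(\alpha))$ range over all of $E_C$), one obtains the claimed parametrization of $E_{C'}$.

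The main obstacle is essentially bookkeeping rather than anything conceptually hard; the critical role of the hypothesis $c > 0$ is that it guarantees $1-\alpha+\alpha c > 0$ throughout $[0,1]$, which is what makes the reparameterization $\beta$ well-defined, smooth, and monotone. Without positivity of $c$, one could cross a value $\alpha$ at which $s(\alpha)$ vanishes or changes sign, and the pointwise application of Lemma \ref{ScaleShiftInvLem} would break down.
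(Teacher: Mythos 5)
Your approach matches the paper's: both decompose $C'_\alpha$ as a uniform scaling and shift of $C_{\beta(\alpha)}$ and then invoke Lemma~\ref{ScaleShiftInvLem} pointwise. The paper splits this into two explicit steps (handle the scalar $c$, handle the shift $dI$, then compose), while you do it in a single coefficient match; the reparameterization $\beta$ and the scale factor $1-\alpha+\alpha c$ emerge identically either way.

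However, your final step glosses over a real discrepancy. Matching the $I$-coefficient of $C'_\alpha = (1-\alpha)A + \alpha c B + \alpha d I$ against $s(\alpha)C_{\beta(\alpha)} + t(\alpha)I$ gives $t(\alpha) = \alpha d$, whereas the lemma's displayed formula has the shift $\beta(\alpha)d = \frac{\alpha c}{1-\alpha+\alpha c}\,d$, and these agree only when $c=1$ or at the endpoints $\alpha\in\{0,1\}$. In your closing sentence you left the shift as the unexpanded symbol $t(\alpha)$ and asserted that one ``obtains the claimed parametrization''; substituting $t(\alpha)=\alpha d$ shows you actually obtain $(1-\alpha+\alpha c)\lambda + \alpha d$, which is \emph{not} the stated formula. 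A trivial check ($n=1$, $A=B=[0]$, $c=2$, $d=1$) gives $\sigma(C'_\alpha)=\{\alpha\}$, consistent with the $\alpha d$ version and inconsistent with $\beta(\alpha)d = 2\alpha/(1+\alpha)$. Composing the two operations in the paper's own proof correctly also yields $\alpha d$, so the displayed formula in the lemma appears to contain an error; your derivation is in fact the correct version, and you should have flagged the mismatch instead of claiming agreement with the statement as written.
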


\begin{proof}
Note that 
\begin{align*}
    \sigma\big((1-\alpha)A + \alpha cB\big) &= (1-\alpha + \alpha c) \sigma \left( \frac{1-\alpha}{1-\alpha + \alpha c}A + \frac{\alpha c}{1-\alpha + \alpha c} B \right) \\
    &= (1-\alpha + \alpha c) \sigma( C_{\beta(\alpha)})
\end{align*}
and
\begin{align*}
    \sigma\big((1-\alpha)A + \alpha(B+dI)\big) = \sigma(C_\alpha + \alpha d I),
\end{align*}
so composing these two operations yields the result.
\end{proof}

\begin{Rmk}
In particular, if $p$ is a convex eigenpairing of $A$ and $B$, then the map $\lambda \mapsto c p(\lambda) + d$ is a convex eigenpairing of $A$ and $cB+dI$. We remark that multiplying one matrix by a negative or non-real scalar without also scaling the other does not preserve pairings in a predictable way. For a concrete realization of this phenomenon, see section \ref{Section2x2} and note that scaling just one matrix by an element of $\C \backslash \R^+$ will change the crucial quantity $\arg(\frac{\mu}{\lambda})$.
\end{Rmk}

Moreover, we may modify the scope of our path by inverting it, truncating it, or extending it as follows. In remainder of this section, we will assume that $p$ is a $\CC$-eigenpairing.

\begin{Lemma}[Inversion]\label{InvertibilityLem}
Denote by $\CC^R$ the path $\CC$ with reversed orientation, so $\CC^R$ is a matrix path from $B$ to $A$. Then the inverse map $p^{-1}$ is a $\CC^R$-eigenpairing of $B$ and $A$.
\end{Lemma}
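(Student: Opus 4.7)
The plan is to take a $\CC$-eigenpath set whose induced pairing is $p$ and construct from it a $\CC^R$-eigenpath set whose induced pairing is $p^{-1}$. Concretely, let $\{\gamma_1, \ldots, \gamma_n\}$ be a $\CC$-eigenpath set with $p(\gamma_j(0)) = \gamma_j(1)$ for each $j$, and define $\gamma_j^R : [0,1] \to \C$ by $\gamma_j^R(\alpha) = \gamma_j(1-\alpha)$.

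Each $\gamma_j^R$ is continuous as a composition of continuous functions. For any $\alpha \in [0,1]$ we have
\[
\spec(\CC^R_\alpha) = \spec(\CC_{1-\alpha}) = \{\gamma_1(1-\alpha), \ldots, \gamma_n(1-\alpha)\} = \{\gamma_1^R(\alpha), \ldots, \gamma_n^R(\alpha)\}
\]
as multisets, so $\{\gamma_1^R, \ldots, \gamma_n^R\}$ is a $\CC^R$-eigenpath set. The bijection it induces on $\spec(B) \to \spec(A)$ sends
\[
\gamma_j^R(0) = \gamma_j(1) \;\longmapsto\; \gamma_j^R(1) = \gamma_j(0),
\]
which is exactly the inverse of the assignment $\gamma_j(0) \mapsto \gamma_j(1)$ defining $p$. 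Hence $p^{-1}$ is a $\CC^R$-eigenpairing.

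This is essentially a bookkeeping argument; there is no real obstacle beyond checking continuity and unwinding the definitions. The only subtlety worth flagging is multiplicity: repeated eigenvalues of $A$ or $B$ are distinguished by which path $\gamma_j$ they come from, and the reparameterization $\alpha \mapsto 1-\alpha$ preserves this labeling, so the multiset structure on which pairings are defined is respected throughout.
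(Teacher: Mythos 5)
Your proof is correct and takes essentially the same approach as the paper: reverse the orientation of each eigenpath via $\alpha \mapsto 1-\alpha$ and observe that the induced bijection is $p^{-1}$. The paper states this in a single sentence; you have simply made the bookkeeping explicit.
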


\begin{proof}
    By reversing the orientations of the eigenpaths $\gamma_j$ that induce the eigenpairing $p : \spec(A) \to \spec(B)$, we obtain paths that induce the eigenpairing $p^{-1} : \spec(B) \to \spec(A)$.
\end{proof}

\begin{Lemma}[Truncation]
Let $0\leq\alpha<\beta\leq1$, and suppose that $\{\gamma_1, ..., \gamma_n\}$ is a $\CC$-eigenpath set. Denote by $\gamma_j'$ the restriction of the path $\gamma_j$ to $[\alpha, \beta]$, so each $\gamma_j'$ is a continuous function $[\alpha, \beta] \to \C$. If $\CC'$ is the restriction of $\CC$ to $[\alpha, \beta]$, then $\{\gamma_1', ..., \gamma_n'\}$ is a $\CC'$-eigenpath set.
\end{Lemma}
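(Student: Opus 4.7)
The plan is straightforward verification: the restricted collection $\{\gamma_1', \ldots, \gamma_n'\}$ will be a $\CC'$-eigenpath set provided each $\gamma_j'$ is continuous on $[\alpha,\beta]$ and the multiset equality $\spec(\CC'_t) = \{\gamma_1'(t), \ldots, \gamma_n'(t)\}$ holds for every $t \in [\alpha, \beta]$. Both properties should follow immediately from the corresponding properties of $\{\gamma_1, \ldots, \gamma_n\}$ together with the fact that domain restriction changes neither the pointwise values of the paths nor the matrix values of $\CC$.

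First I would observe that each $\gamma_j' : [\alpha, \beta] \to \C$ is continuous, since the restriction of a continuous function to a subspace with the induced topology is again continuous. Next I would fix an arbitrary $t \in [\alpha, \beta]$ and note that by definition $\CC'_t = \CC_t$ and $\gamma_j'(t) = \gamma_j(t)$ for each $j$. The hypothesis that $\{\gamma_1, \ldots, \gamma_n\}$ is a $\CC$-eigenpath set gives $\spec(\CC_t) = \{\gamma_1(t), \ldots, \gamma_n(t)\}$ as multisets at this point $t \in [\alpha,\beta] \subseteq [0,1]$, and substituting the equalities $\CC'_t = \CC_t$ and $\gamma_j'(t) = \gamma_j(t)$ yields $\spec(\CC'_t) = \{\gamma_1'(t), \ldots, \gamma_n'(t)\}$, as required.

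There is no substantive obstacle here; the lemma is essentially a bookkeeping check that the defining conditions of a $\CC$-eigenpath set are preserved by restriction of the parameter interval. The only minor cosmetic point worth flagging is that the paper's definition of a matrix path uses the domain $[0,1]$ throughout, so if one wishes to stay strictly within that convention one can compose both $\CC'$ and each $\gamma_j'$ with the affine bijection $s \mapsto \alpha + (\beta - \alpha)s$ from $[0,1]$ to $[\alpha,\beta]$; this reparametrization respects both continuity and the spectrum-multiset equality pointwise, so it does not affect any part of the argument above.
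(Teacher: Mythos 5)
Your verification is correct and matches the paper's (one-line) proof, which simply notes that the restricted paths satisfy the same defining multiset equality; you spell out the pointwise substitution and continuity-of-restriction steps explicitly, and your reparametrization remark is a sensible aside but not needed.
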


\begin{proof}
The paths $\gamma_j'$ satisfy the same property \eqref{EigenPSDef} that characterizes eigenpath sets.
\end{proof}

Since truncation of matrix paths truncates eigenpaths correspondingly, we may abuse notation and say that $\{\gamma_1', ..., \gamma_n'\}$ is a $\CC$-eigenpath set from $\CC_\alpha$ to $\CC_\beta$ when we actually mean that it is an eigenpath set for the truncation of $\CC$ to $[\alpha, \beta]$. Similarly, we may refer to a $\CC$-eigenpairing and the $\CC$-eigenregion of $\CC_\alpha$ and $\CC_\beta$.

\begin{Lemma}[Concatenation]\label{Concatenation}
Suppose that $p$ is a $\CC$-eigenpairing for $A$ and $A'$, and further that $p'$ is a $\CC'$-eigenpairing for $A'$ and $A''$. Define $\CC''$ as the concatenation of the paths $\CC$ and $\CC'$. Then the composition $p' \circ p$ is a $\CC''$-eigenpairing for $A$ and $A''$.
\end{Lemma}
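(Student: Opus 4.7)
The plan is to produce a $\CC''$-eigenpath set that induces the pairing $p' \circ p$ by splicing together an eigenpath set for $\CC$ and an eigenpath set for $\CC'$. Concretely, I would choose a $\CC$-eigenpath set $\{\gamma_1,\ldots,\gamma_n\}$ that induces $p$ (so $p(\gamma_j(0)) = \gamma_j(1)$) and a $\CC'$-eigenpath set $\{\gamma'_1,\ldots,\gamma'_n\}$ that induces $p'$ (so $p'(\gamma'_k(0)) = \gamma'_k(1)$). Both multisets $\{\gamma_j(1)\}_{j=1}^n$ and $\{\gamma'_k(0)\}_{k=1}^n$ equal $\spec(A')$, so a bijection $\sigma$ of $\{1,\ldots,n\}$ exists with $\gamma'_{\sigma(j)}(0) = \gamma_j(1)$ for every $j$. (This matching exists regardless of multiplicities in $\spec(A')$, since we are only matching labeled copies of equal values.)

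Using the standard reparameterization of a concatenation, I would write $\CC''_\alpha = \CC_{2\alpha}$ for $\alpha \in [0,\tfrac12]$ and $\CC''_\alpha = \CC'_{2\alpha-1}$ for $\alpha \in [\tfrac12,1]$; continuity at $\alpha=\tfrac12$ holds because $\CC_1=A'=\CC'_0$. Define
\[
\gamma''_j(\alpha) = \begin{cases} \gamma_j(2\alpha) & \alpha\in[0,\tfrac12],\\ \gamma'_{\sigma(j)}(2\alpha-1) & \alpha\in[\tfrac12,1].\end{cases}
\]
By the matching condition $\gamma'_{\sigma(j)}(0)=\gamma_j(1)$, each $\gamma''_j$ is continuous on $[0,1]$.

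Next I would verify that $\{\gamma''_1,\ldots,\gamma''_n\}$ is a $\CC''$-eigenpath set. On $[0,\tfrac12]$ this follows from $\spec(\CC_{2\alpha}) = \{\gamma_j(2\alpha)\}_{j}$, and on $[\tfrac12,1]$ from $\spec(\CC'_{2\alpha-1}) = \{\gamma'_k(2\alpha-1)\}_k = \{\gamma'_{\sigma(j)}(2\alpha-1)\}_j$ since $\sigma$ is a bijection. Thus the multiset equality \eqref{EigenPSDef} holds for $\CC''$ on all of $[0,1]$.

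Finally, the induced $\CC''$-eigenpairing sends $\gamma''_j(0)=\gamma_j(0)$ to $\gamma''_j(1)=\gamma'_{\sigma(j)}(1) = p'(\gamma'_{\sigma(j)}(0)) = p'(\gamma_j(1)) = p'(p(\gamma_j(0)))$, which is exactly $(p'\circ p)(\gamma_j(0))$. Hence $p'\circ p$ is a $\CC''$-eigenpairing. The only non-bookkeeping step is producing the permutation $\sigma$, and that amounts to the trivial observation that two finite multisets equal to $\spec(A')$ admit a value-preserving bijection; I expect this to be the main place where one must be slightly careful, but it is not a serious obstacle.
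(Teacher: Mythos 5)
Your proof is correct and is essentially the paper's own argument: re-index (your $\sigma$) so that the terminal value of each $\CC$-eigenpath matches the initial value of a $\CC'$-eigenpath, concatenate, and read off the induced pairing. You simply spell out the reparameterization and the multiset-verification steps that the paper leaves implicit.
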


\begin{proof}
Let $\{\gamma_1, ..., \gamma_n\}$ be a $\CC$-eigenpath set corresponding to $p$ and $\{\gamma_1', ..., \gamma_n'\}$ a $\CC'$-eigenpath set corresponding to $p'$. Re-index these paths so that $\gamma_j(1) = \gamma_j'(0)$. Define $\gamma_j''$ as the concatenation of the paths $\gamma_j$ and $\gamma_j'$. Then $\{\gamma_1'', ..., \gamma_n''\}$ is a $\CC''$-eigenpath set that induces the pairing $p' \circ p$, finishing our proof.
\end{proof}

\begin{Cor}\label{AllBijectionsCor}
If $\CC_{\alpha_0} = cI$ for some $\alpha_0 \in [0,1]$ and $c \in \C$, then every bijection $\spec(A) \to \spec(B)$ is a $\CC$-eigenpairing.
\end{Cor}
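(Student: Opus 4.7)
The approach is to split $\CC$ at $\alpha_0$ using the Truncation Lemma and then reassemble via the Concatenation Lemma (Lemma \ref{Concatenation}). The key observation is that because $\CC_{\alpha_0} = cI$, every $\CC$-eigenpath must take the single value $c$ at $\alpha_0$, so the two halves of any eigenpath set may be freely recombined across $\alpha_0$ without breaking continuity.

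Concretely, let $\CC^1$ and $\CC^2$ denote the restrictions of $\CC$ to $[0,\alpha_0]$ and $[\alpha_0,1]$. Each admits a continuous eigenpath set by Kato's theorem (Theorem 2.5.2 of \cite{PertTheoryKato}, as cited in the Introduction); fix a $\CC^1$-eigenpath set $\{\gamma_1,\dots,\gamma_n\}$ and a $\CC^2$-eigenpath set $\{\eta_1,\dots,\eta_n\}$. Since $\spec(\CC_{\alpha_0}) = \spec(cI)$ consists of $n$ copies of $c$, we have $\gamma_j(\alpha_0) = \eta_j(\alpha_0) = c$ for every $j$. Given a prescribed bijection $q : \spec(A) \to \spec(B)$, re-index $\{\eta_1,\dots,\eta_n\}$ so that $\eta_j(1) = q(\gamma_j(0))$ for each $j$; this is always possible because both $(\eta_j(1))_{j=1}^n$ and $(q(\gamma_j(0)))_{j=1}^n$ enumerate the same multiset $\spec(B)$. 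Now define $\gamma_j^*$ by concatenating $\gamma_j$ on $[0,\alpha_0]$ with $\eta_j$ on $[\alpha_0,1]$; this is continuous because the two halves agree at $\alpha_0$ with common value $c$.

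Finally, $\{\gamma_1^*,\dots,\gamma_n^*\}$ is a $\CC$-eigenpath set, since on each sub-interval its pointwise multiset coincides with that of one of the original two eigenpath sets and hence equals $\spec(\CC_\alpha)$. The induced bijection sends $\gamma_j(0) \mapsto \eta_j(1) = q(\gamma_j(0))$, which is exactly $q$. Since $q$ was arbitrary, every bijection $\spec(A) \to \spec(B)$ is a $\CC$-eigenpairing. There is no real obstacle in the argument: all of the substance is captured by the remark that the endpoint-matching step in the Concatenation Lemma is entirely unconstrained here, because all $n$ paths collide at the single value $c$ at $\alpha_0$ and so any of the $n!$ ways of gluing them together yields a valid continuous eigenpath set.
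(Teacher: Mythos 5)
Your proof is correct and follows essentially the same route as the paper's one-line proof, which simply invokes Lemma \ref{Concatenation} with $A' = cI$ and $A'' = B$. You have unpacked the mechanism that makes that invocation work: since $\spec(cI)$ consists of $n$ copies of $c$, every bijection $\spec(A) \to \spec(cI)$ and every bijection $\spec(cI) \to \spec(B)$ is an eigenpairing, so for any target $q$ one can factor $q = p' \circ p$ through $cI$; your re-indexing of the $\eta_j$ is exactly this choice of $p'$. Nothing is missing, and the detail you supply (the freedom to glue across $\alpha_0$ because all paths collide at $c$) is precisely the implicit content of the paper's terse appeal to Concatenation.
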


\begin{proof}
Apply the previous proposition, using $A' = cI$ and $A''=B$.
\end{proof}

\begin{Lemma}[Combination]\label{Combination}
For $j=1,...,m$ let $\CC^{(j)} : [0,1] \to M_{n_j}$ be continuous, and suppose that $\CC_\alpha$ is block upper (or lower) triangular with blocks $\CC^{(1)}_\alpha, ..., \CC^{(m)}_\alpha$. Then $E_\CC = \bigcup_{j=1}^m E_{\CC^{(j)}}$, and so any collection $\{p_j\}_{j=1}^m$ such that $p_j$ is a $\CC^{(j)}$-eigenpairing induces a $\CC$-eigenpairing $p$ given by $\lambda \mapsto p_j(\lambda)$ when $\lambda \in \spec(\CC^{(j)})$.
\end{Lemma}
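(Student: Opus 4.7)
The plan is to reduce everything to the standard linear algebra fact that the spectrum of a block triangular matrix, counted with multiplicity, is the multiset union of the spectra of its diagonal blocks. Applied pointwise at each $\alpha \in [0,1]$ this gives
\[
\spec(\CC_\alpha) \;=\; \bigsqcup_{j=1}^{m} \spec\!\bigl(\CC^{(j)}_\alpha\bigr)
\]
as a multiset, from which the set-theoretic identity $E_\CC = \bigcup_{j=1}^{m} E_{\CC^{(j)}}$ is immediate by unraveling the definition of the eigenregion.

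For the statement about eigenpairings, I would pick, for each $j$, a $\CC^{(j)}$-eigenpath set $\{\gamma_1^{(j)}, \ldots, \gamma_{n_j}^{(j)}\}$ that induces $p_j$. Each $\gamma_k^{(j)}$ is continuous on $[0,1]$ with $\gamma_k^{(j)}(\alpha) \in \spec(\CC^{(j)}_\alpha)$, and collectively they realize $\spec(\CC^{(j)}_\alpha)$ as a multiset. Then the combined family
\[
\bigl\{\gamma_k^{(j)} : 1 \le j \le m,\ 1 \le k \le n_j\bigr\}
\]
consists of $n_1 + \cdots + n_m = n$ continuous functions $[0,1] \to \C$, and by the multiset identity above, their values at any $\alpha$ enumerate $\spec(\CC_\alpha)$ with multiplicity. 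This is precisely the defining property \eqref{EigenPSDef} of a $\CC$-eigenpath set.

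Finally, the $\CC$-eigenpairing $p$ induced by this combined eigenpath set sends $\gamma_k^{(j)}(0) \mapsto \gamma_k^{(j)}(1)$, which agrees with $p_j$ wherever the latter is defined, so $p(\lambda) = p_j(\lambda)$ whenever $\lambda$ is regarded as coming from $\spec(\CC^{(j)}_0)$. The only point that requires a moment of care is that an eigenvalue $\lambda$ might appear in $\spec(\CC^{(j)}_0)$ for several values of $j$; the statement as phrased is unambiguous once one reads $\spec(A)$ as a multiset with occurrences tagged by their block of origin, which is the same convention used elsewhere in the paper. I do not anticipate a genuine obstacle here — the whole argument is a direct translation of the block-triangular spectrum identity into the language of eigenpaths, so the proof should be quite short.
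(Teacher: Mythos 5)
Your proof is correct and follows essentially the same route as the paper: both reduce to the fact that $\spec(\CC_\alpha)$ is the multiset union of the $\spec(\CC^{(j)}_\alpha)$ (the paper phrases this via the factorization of the characteristic polynomial, you phrase it directly), and both then concatenate $\CC^{(j)}$-eigenpath sets inducing the $p_j$ into a $\CC$-eigenpath set inducing $p$. Your closing remark about tagging repeated eigenvalues by block of origin is a reasonable observation but not a substantive departure.
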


\begin{proof}
The characteristic polynomial of $\CC_\alpha$ is the product of those of the $\CC^{(j)}_\alpha$, so the $\CC$-eigenregion is the union of the $\CC^{(j)}$-eigenregions. It follows that if $\{\gamma_{j,1}, ..., \gamma_{j, n_j}\}$ is a $\CC^{(j)}$-eigenpath set that induces the eigenpairing $p_j$, then the set $\{\gamma_{j, i} \mid 1 \leq j \leq m, 1 \leq i \leq n_j\}$ is a $\CC$-eigenpath set for $A$ and $B$. Furthermore, this eigenpath set induces the $\CC$-eigenpairing $p$.
\end{proof}

\begin{Rmk}
The converse of the above lemma does not hold in the sense that not all $\CC$-eigenpairings directly result from $\CC^{(j)}$-eigenpairings. In particular, this event will occur if some $E_{\CC^{(i)}}$ and $E_{\CC^{(j)}}$ have nonempty intersection for $i \neq j$.
\end{Rmk}


\section{Analytic Matrix Paths}\label{SectionAnalytic}

In this section we enforce the condition that the entries of $\CC_\alpha$ be analytic in $\alpha$. This case has been treated rather thoroughly in \cite{PertTheoryKato} (wherein singular ambiguities are called ``exceptional points") and \cite{PertTheoryBaum}, so we will simply present some main results through the lens of eigenpairings. Principally, we aim to show that $\CC$-eigenpaths are analytic except at finitely many singular ambiguities, and that a $\CC$-eigenpath set may be chosen so that any two eigenpaths either coincide entirely or intersect at just finitely many points. Though interesting in their own right, these results mainly serve to aid with our later proof of Theorem \ref{RippingThm}.

We will first cite some intermediate lemmas, noting that the characteristic polynomial of $\CC_\alpha$ is analytic on $[0,1]$ and therefore holomorphic on a domain in $\C$ containing $[0,1]$.

\begin{Lemma}\label{AnalyticityInts}
Denote by $\HH$ the space $\C$-valued functions of $\alpha$ that are holomorphic on a domain containing the real interval $[0,1]$.
\begin{enumerate}
    \item Let $\chi(\alpha,t) \in \HH[t]$ be a monic polynomial in the variable $t$ whose coefficients are holomorphic functions of $\alpha$. Then there exists a unique decomposition
    \begin{align*}
        \chi = \prod_{j=1}^r q_j^{m_j}
    \end{align*}
    of $\chi$ into monic irreducible factors $q_1, ..., q_r \in \HH[t]$. (\cite{PertTheoryBaum}, Corollary 3.2.1.1)
    
    \item Let $q \in \HH[t]$ be monic and irreducible. Then the points $\alpha$ at which $q$ has a multiple root are isolated. (\cite{PertTheoryBaum}, Corollary 3.2.2.2)
    
    \item Let $q_1, q_2 \in \HH[t]$ be relatively prime. Then the points $\alpha$ at which $q_1$ and $q_2$ have a common root are isolated. (\cite{PertTheoryBaum}, Theorem A3.1.1)
    
    \item The simple roots of a polynomial are smooth functions of its coefficients. (\cite{SimpleSmooth})
\end{enumerate}
\end{Lemma}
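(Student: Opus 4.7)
The statement collects four results that the paper cites from external references rather than reproving, so what I will provide is a sketch of the proof I would write if I had to reconstruct them.

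For part (a), my plan is to work in the field $\mathcal{M}$ of meromorphic functions on the chosen domain of holomorphy. Because $\mathcal{M}$ is a field, $\mathcal{M}[t]$ is a UFD, and so $\chi$ factors uniquely into monic irreducibles there. The real task is showing that these irreducible factors can be taken in $\HH[t]$ rather than merely $\mathcal{M}[t]$. I would clear denominators factor by factor and then argue that no pole can survive in a factor without forcing a matching pole elsewhere; since $\chi$ itself has holomorphic coefficients, no such poles can persist. This pole-bookkeeping step is where I expect most of the technical difficulty to lie.

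For parts (b) and (c), the strategy is uniform and classical. I would form the discriminant $D(\alpha) := \mathrm{disc}_t\, q(\alpha,t)$ for part (b), and the resultant $R(\alpha) := \mathrm{res}_t\bigl(q_1(\alpha,t), q_2(\alpha,t)\bigr)$ for part (c). Both are polynomial expressions in the (holomorphic) coefficients, so both lie in $\HH$. A multiple root of $q$ at $\alpha$ corresponds precisely to $D(\alpha) = 0$, and a common root of $q_1, q_2$ at $\alpha$ corresponds precisely to $R(\alpha) = 0$. Using part (a), irreducibility of $q$ (in characteristic zero) guarantees $D \not\equiv 0$, and relative primality of $q_1, q_2$ guarantees $R \not\equiv 0$. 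The standard complex-analytic fact that zeros of a nontrivial holomorphic function on a connected domain are isolated then finishes each argument.

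For part (d), I would apply the holomorphic (equivalently smooth, after restriction to $[0,1]$) implicit function theorem directly. At a simple root $t_0$ of a polynomial $p(t) = t^n + \sum_{j<n} a_j t^j$, the derivative $p'(t_0)$ is nonzero, so the equation defining the root can be solved locally for $t$ as a smooth function of the coefficients $a_0, \ldots, a_{n-1}$ near the given configuration. The main obstacle overall is part (a): the remaining three parts collapse quickly to the isolated-zeros principle or the implicit function theorem once the appropriate discriminant or resultant is in hand, but (a) requires genuine analytic algebra to promote the factorization from $\mathcal{M}[t]$ to $\HH[t]$.
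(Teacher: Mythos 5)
The paper does not prove Lemma~\ref{AnalyticityInts} at all: each part is stated with a citation to \cite{PertTheoryBaum} or \cite{SimpleSmooth}, and no argument is reproduced in the text. Your sketch is therefore a reconstruction rather than a comparison, but it is a sound one. One simplification worth noting for part (a): the ``pole-bookkeeping'' you anticipate as the main technical difficulty can be bypassed entirely. Since $\chi$ is monic with holomorphic coefficients, its roots are locally bounded on the domain; the coefficients of any monic factor $f \in \mathcal{M}[t]$, being elementary symmetric functions of a subset of those roots, are then locally bounded meromorphic functions, hence holomorphic by Riemann's removable singularity theorem. This places $f$ in $\HH[t]$ with no case analysis of which poles cancel against which. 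Your route through the discriminant and resultant for (b) and (c) --- each a nonzero element of $\HH$ and so with isolated zeros, using characteristic zero for separability in (b) and Gauss's lemma to transfer coprimality to $\mathcal{M}[t]$ in (c) --- and through the implicit function theorem for (d) is exactly the standard argument one would expect to find in the cited sources.
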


\begin{Prop}\label{AnalyticityThm}
Every $\CC$-eigenpath set is piecewise-smooth, and the only points at which an eigenpath might not be smooth are the singular ambiguities in $E$.
\end{Prop}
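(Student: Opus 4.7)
The plan is to use the irreducible factorization of the characteristic polynomial in $\HH[t]$ to pin down a finite exceptional set of parameters outside of which all eigenvalues are smooth, then to show that this exceptional set consists entirely of singular ambiguities. Let $\chi(\alpha, t) = \det(tI - \CC_\alpha) \in \HH[t]$ and apply Lemma \ref{AnalyticityInts}(a) to factor $\chi = \prod_{j=1}^r q_j^{m_j}$ into monic irreducibles. Define $S \subseteq [0,1]$ as the set of $\alpha_0$ for which some $q_j(\alpha_0, \cdot)$ has a repeated root in $t$, or some pair $q_i, q_j$ with $i \ne j$ shares a root at $\alpha_0$. By Lemma \ref{AnalyticityInts}(b) and (c), each such $\alpha_0$ is isolated among the $\alpha \in [0,1]$, so $S$ is a discrete subset of the compact interval $[0,1]$, hence finite.

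Next I would show that every $\alpha_0 \in S$ gives rise only to singular ambiguities. Fix $\alpha_0 \in S$ and any root $\lambda_0$ witnessing membership. If $\lambda_0$ is a repeated root of some $q_j$ at $\alpha_0$, then by (b) the roots of $q_j$ are distinct in a punctured neighborhood of $\alpha_0$, producing nearby points in $E_\CC$ of strictly smaller multiplicity than $\mult(\lambda_0, \alpha_0)$; if instead $\lambda_0$ is a shared root of two distinct factors, (c) separates this shared root in every punctured neighborhood, again dropping the multiplicity. In either case $(\lambda_0, \alpha_0)$ is a singular ambiguity. Finally, to conclude, I would show that each eigenpath is smooth at every $\alpha_0 \in [0,1] \setminus S$: choose a neighborhood $U$ of $\alpha_0$ disjoint from $S$, on which each $q_j$ has only simple roots and distinct $q_j$'s share no roots. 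Then Lemma \ref{AnalyticityInts}(d) yields smooth curves $\lambda_{j,1}(\alpha), \ldots, \lambda_{j, \deg q_j}(\alpha)$ parameterizing the roots of each $q_j$, and these curves are pairwise disjoint on $U$. Every eigenvalue of $\CC_\alpha$ for $\alpha \in U$ equals one of these $\lambda_{j,k}(\alpha)$ with multiplicity $m_j$, so any continuous eigenpath $\gamma$, taking values in a disjoint union of finitely many smooth curves on $U$, must agree with a single $\lambda_{j,k}$ on a neighborhood of $\alpha_0$ and therefore be smooth there. Combined with the finiteness of $S$, this gives piecewise-smoothness, and combined with the previous step it shows that the only possible non-smooth points are singular ambiguities.

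The main subtlety I expect is reconciling the factorization-multiplicities $m_j$ with the distinction between singular and non-singular ambiguities: a non-singular ambiguity arises precisely when some $m_j > 1$ at an $\alpha_0 \notin S$, since then the multiplicity $m_j$ of $\lambda_{j,k}(\alpha)$ persists in a whole neighborhood. The argument above handles this cleanly because the underlying smooth curve $\lambda_{j,k}$ is unique in $U$ even though its multiplicity in $\chi$ exceeds one, so continuity still forces $\gamma$ to coincide with that single smooth curve locally.
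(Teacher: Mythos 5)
Your proposal uses exactly the same machinery as the paper: the irreducible factorization from Lemma~\ref{AnalyticityInts}(a), the isolated-zero statements (b) and (c), and the smooth-dependence statement (d). The organization differs slightly: you work globally, first defining a finite exceptional set $S\subseteq[0,1]$ of parameters and then arguing on both sides of $S$, while the paper works purely locally, picking a single non-singular-ambiguity point $(\lambda_0,\alpha_0)$, invoking Corollary~\ref{NSACor} to produce a neighborhood of constant multiplicity, and applying (d) there. Both routes yield the result, and your additional step showing that witnessing roots at $\alpha_0\in S$ are in fact singular ambiguities is a nice supplement, though the statement being proved does not actually require it.

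One small imprecision is worth fixing. You define $S$ as a set of parameters $\alpha_0$, and you conclude that eigenpaths are smooth at every $\alpha_0\in[0,1]\setminus S$. But the proposition is pointwise in $E_\CC$: you must show that each point $(\lambda_0,\alpha_0)$ that is not a singular ambiguity is a smooth point of the eigenpath through it, including the case where $\alpha_0\in S$ but $\lambda_0$ is not a witness. For instance, $\alpha_0$ may lie in $S$ because $q_1$ acquires a repeated root there, while $\lambda_0$ is a simple, unshared root of $q_2$; then $(\lambda_0,\alpha_0)$ is not a singular ambiguity and the eigenpath through it must still be shown smooth at $\alpha_0$. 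Your argument currently only addresses witnessing roots at $\alpha_0\in S$ (showing they are singular ambiguities) and $\alpha_0\notin S$ (showing smoothness), leaving this case open. The fix is immediate: the same appeal to (d) applies because $\lambda_0$ is a simple root of $q_2(\alpha_0,\cdot)$ and remains a simple, unshared root of $q_2(\alpha,\cdot)$ in some neighborhood of $\alpha_0$ even if other factors are misbehaving elsewhere. The paper's local formulation via Corollary~\ref{NSACor} sidesteps this bookkeeping automatically.
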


\begin{proof}
Using Lemma \ref{AnalyticityInts}(a), decompose the characteristic polynomial of $\CC_\alpha$ as $\chi = \prod_{j=1}^r q_j^{m_j}$. Suppose that $(\lambda_0, \alpha_0) \in E_\CC$ is not a singular ambiguity. Then by Corollary \ref{NSACor}, there is an open neighborhood $\OO \subseteq E$ of $(\lambda_0, \alpha_0)$ on which points have constant multiplicity $m = \mult(\lambda_0, \alpha_0)$. Suppose without loss of generality that $U$ is connected, so $U = \{(\gamma(\alpha), \alpha) \mid \alpha_1 < \alpha < \alpha_2\}$ for some $\alpha_1 < \alpha_0 < \alpha_2$ and continuous function $\gamma : (\alpha_1, \alpha_2) \to \C$. Since $(\lambda_0, \alpha_0)$ was arbitrary, it suffices to show that $\gamma$ is smooth at $\alpha_0$.

We know by Lemma \ref{AnalyticityInts}(b) that there is some $j$ such that $q_j(\alpha, \gamma(\alpha)) = 0$ for $\alpha_1 < \alpha < \alpha_2$, and that $q_j$ has no multiple roots in this region. Then by (c) we know that $\gamma$ is smooth in the coefficients of $q_j$. Since these coefficients are smooth in $\alpha$ by (d), it follows that $\gamma$ is in fact smooth at $\alpha_0$, as desired.
\end{proof}

\begin{Lemma}\label{PathMultiplicityLem}
There is a $\CC$-eigenpath set $\{\gamma_1, ..., \gamma_n\}$ such that for any $i \neq j$, either $\gamma_i = \gamma_j$ on $[0,1]$ or $\gamma_i$ and $\gamma_j$ agree at finitely many points.
\end{Lemma}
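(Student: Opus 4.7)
The plan is to build the eigenpath set factor-by-factor from the irreducible factorization of the characteristic polynomial. By Lemma \ref{AnalyticityInts}(a), write $\chi = \prod_{j=1}^r q_j^{m_j}$ with each $q_j \in \HH[t]$ monic irreducible of some degree $d_j$, so $\sum_j m_j d_j = n$. For each fixed $j$, the coefficients of $q_j$ are holomorphic on a domain containing $[0,1]$, so applying the continuous parameterization theorem of Kato (Theorem 2.5.2 of \cite{PertTheoryKato}) to the companion-matrix path of $q_j$ yields $d_j$ continuous functions $\phi_1^{(j)}, \ldots, \phi_{d_j}^{(j)} : [0,1] \to \C$ whose multiset of values at each $\alpha$ equals the root multiset of $q_j(\alpha, \cdot)$. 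The proposed $\CC$-eigenpath set is obtained by listing $m_j$ identical copies of each $\phi_k^{(j)}$, across all $j$ and $k$; the multiset identity $\spec(\CC_\alpha) = \{\gamma_1(\alpha), \ldots, \gamma_n(\alpha)\}$ then follows from the factorization of $\chi(\alpha, \cdot)$.

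Next, verify the dichotomy for any two distinct paths $\gamma_i \ne \gamma_j$ in this set. There are three cases. First, if both are copies of the same $\phi_k^{(j)}$, then $\gamma_i = \gamma_j$ on $[0,1]$, giving one branch of the alternative. Second, suppose they are copies of $\phi_a^{(j)}$ and $\phi_b^{(j)}$ with $a \ne b$, arising from the same irreducible factor $q_j$. By Lemma \ref{AnalyticityInts}(b), the set of $\alpha$ at which $q_j(\alpha, \cdot)$ has a multiple root is isolated, and hence finite by compactness of $[0,1]$; away from these finitely many $\alpha$, the $d_j$ continuous functions $\phi_1^{(j)}, \ldots, \phi_{d_j}^{(j)}$ take pairwise distinct values, so $\gamma_i(\alpha) \ne \gamma_j(\alpha)$. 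Hence $\gamma_i$ and $\gamma_j$ coincide at finitely many points. Third, if $\gamma_i$ comes from $q_i$ and $\gamma_j$ from $q_j$ with $i \ne j$, then $q_i$ and $q_j$ are distinct monic irreducibles in $\HH[t]$, hence relatively prime; by Lemma \ref{AnalyticityInts}(c) their common roots occur at isolated $\alpha$, again finitely many in $[0,1]$.

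The main obstacle I anticipate is the first step, namely justifying a continuous parameterization of the roots of each \emph{individual} irreducible factor $q_j$, rather than of $\chi$ as a whole. The cleanest route is to apply Kato's continuity theorem to the companion-matrix path attached to $q_j$, but one should confirm that the cited theorem indeed yields continuity in $\alpha$ for polynomials whose coefficients are merely continuous (not analytic) on $[0,1]$ — this is the content of the general root-continuity results in \cite{Bounds, RootContinuity}. Once this per-factor parameterization is in hand, the remainder is a direct bookkeeping argument applying parts (b) and (c) of Lemma \ref{AnalyticityInts} to rule out the ``infinitely many coincidences'' possibility in each of the remaining cases.
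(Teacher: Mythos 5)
Your proof is correct and follows essentially the same approach as the paper: decompose $\chi$ into irreducible factors via Lemma \ref{AnalyticityInts}(a), parameterize each factor's roots continuously, take $m_j$ copies of each, and use parts (b) and (c) to bound the coincidences. You are somewhat more explicit than the paper in justifying the per-factor continuous parameterization (via companion matrices and Kato) and in organizing the dichotomy into cases, but the underlying argument is the same.
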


\begin{proof}
Again decompose the characteristic polynomial $\chi(\alpha,t)$ of $\CC_\alpha$ as $\chi = \prod_{j=1}^r q_j^{m_j}$. Notice that the root set of $\chi$ is the union of the root sets of these irreducible factors.

In particular, the eigenregion is $\{(\lambda, \alpha) \mid \alpha \in [0,1], \; \chi_0(\alpha, \lambda) = 0\}$ for $\chi_0 = \prod_{j=1}^r q_j$. By parts (b) and (c) of Lemma \ref{AnalyticityInts}, the points $\alpha$ at which $\chi_0$ has a multiple root are isolated. Since $\alpha \in [0,1]$, there are in fact finitely many such points. We may therefore find a set $\{\gamma_1, ..., \gamma_s\}$ of paths such that each $q_j$ has a root set $\{\gamma_{j_1}(\alpha), ..., \gamma_{j_{n_j}}(\alpha) \mid \alpha \in [0,1]\}$, the $\gamma_i$ intersect at only finitely many points, and the sets $\{\gamma_{j_1}, ..., \gamma_{j_{n_j}}\}$ partition $\{\gamma_1, ..., \gamma_s\}$. Then the multiset of paths wherein each $\gamma_{j_i}$ occurs $m_j$ times is a $\CC$-eigenpath set, so we are finished.
\end{proof}

\begin{Cor}\label{SingularAmbiguityBound}
There are finitely many singular ambiguities in $E_\CC$.
\end{Cor}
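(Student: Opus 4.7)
The plan is to invoke the special eigenpath set $\{\gamma_1, \ldots, \gamma_n\}$ furnished by Lemma~\ref{PathMultiplicityLem} and show that every singular ambiguity must arise as an intersection point between two distinct eigenpaths that do not coincide on all of $[0,1]$. Since there are at most $\binom{n}{2}$ such pairs and by Lemma~\ref{PathMultiplicityLem} each such pair agrees at only finitely many values of $\alpha$, the collection of candidate locations for singular ambiguities will automatically be finite.

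The key step is this local claim: if $(\lambda_0, \alpha_0)$ is a singular ambiguity with $m = \mult(\lambda_0, \alpha_0) \geq 2$, and $\gamma_{i_1}, \ldots, \gamma_{i_m}$ are the eigenpaths (from our chosen set) passing through $(\lambda_0, \alpha_0)$, then at least two of them must fail to coincide on $[0,1]$. I would prove this by contradiction: suppose instead that all $m$ of them coincide on $[0,1]$ with a single path $\gamma$. The remaining eigenpaths $\gamma_j$ satisfy $\gamma_j(\alpha_0) \neq \lambda_0$, and by continuity each stays bounded away from $\lambda_0$ on some interval around $\alpha_0$. Intersecting finitely many such exclusion regions, I can select a product neighborhood $U \subseteq \C \times [0,1]$ of $(\lambda_0, \alpha_0)$ that misses the graphs of all other $\gamma_j$. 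Then $\OO = U \cap E_\CC$ consists solely of points $(\gamma(\alpha), \alpha)$, each of multiplicity exactly $m$, contradicting the definition of a singular ambiguity.

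Having established this, every singular ambiguity $(\lambda_0, \alpha_0)$ in $E_\CC$ lies on a crossing of two eigenpaths $\gamma_i, \gamma_j$ with $\gamma_i \neq \gamma_j$ on $[0,1]$. By Lemma~\ref{PathMultiplicityLem}, each such pair shares only finitely many $\alpha$-values, and summing over the finitely many pairs gives a finite set of candidate $\alpha_0$'s. The corresponding $\lambda_0$'s are determined by $\lambda_0 = \gamma_i(\alpha_0)$, so the set of singular ambiguities is finite.

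The main obstacle is the local claim; once that is in place the counting is routine. The slightly delicate point is confirming that the relative topology on $E_\CC$ near $(\lambda_0, \alpha_0)$ is captured by the graphs of the $m$ coinciding eigenpaths alone, which is where continuity of the other $\gamma_j$ and the product neighborhood construction come in. Everything else follows directly from the structure theorem for the analytic eigenpath set.
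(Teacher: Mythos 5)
Your proof is correct and uses the same key input as the paper, namely Lemma~\ref{PathMultiplicityLem}, and reaches the same conclusion by showing singular ambiguities can only occur where two distinct paths from that special eigenpath set cross. The paper's proof states this containment (in fact, the stronger ``precisely'') in one sentence without justification; your local contrapositive argument, isolating the $m$ coinciding paths in a product neighborhood away from the remaining $\gamma_j$, supplies the missing reasoning for the containment direction that the corollary actually requires.
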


\begin{proof}
The singular ambiguities occur precisely at the (finitely many) points of intersection as in Lemma \ref{PathMultiplicityLem}.
\end{proof}


\section{Proof of Theorem 1}

Recall Theorem \ref{ClosePathsThm}, which states that sufficiently small perturbations of a matrix path $\CC$ induce small perturbations of eigenpaths, regardless of the ambiguities in $E_\CC$.

Our strategy for proving this result will be as follows: First, we construct a sufficiently well-behaved finite open cover of the singular ambiguities of $E_\CC$. We then find $\delta>0$ based on certain numerical properties of this cover. Finally, the conditions for a ``well-behaved" open cover will allow us to construct the desired $\CC$-eigenpath set. The following definitions and technical lemmas serve to break up this proof into smaller components, some of which will also be used in our proof of Theorem \ref{RippingThm}.

\begin{Def}
The \emph{diameter} of $X \subseteq E_\CC$ is $\sup \{ |\lambda_1 - \lambda_2| \mid (\lambda_1, \alpha), (\lambda_2, \alpha) \in X \}$.
\end{Def}

\begin{Def}
We say that a $\CC$-eigenpath $\gamma$ \emph{passes through} a subset $X \subseteq E_\CC$ if there is an $\alpha \in [0,1]$ such that $(\gamma(\alpha), \alpha) \in X$.
\end{Def}

\begin{Lemma}\label{SeparationLem}
Let $(\lambda_0, \alpha_0) \in E_\CC$ be a singular ambiguity and $\varepsilon > 0$. Then there is a connected open neighborhood $\OO \subseteq E_\CC$ of $(\lambda_0, \alpha_0)$ with closure $\overline{\OO }$ such that
\begin{enumerate}
    \item The diameter of $\OO $ is less than $\varepsilon$.
    
    \item For any $\CC$-eigenpath $\gamma$ that passes through $\overline{\OO }$, we have $\gamma(\alpha_0) = \lambda_0$.
    
    \item The boundary $\partial \OO $ of $\OO $ contains finitely many points, none of which are singular ambiguities.
\end{enumerate}
\end{Lemma}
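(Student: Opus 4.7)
The plan is to construct $\OO$ as a thin tube $E_\CC \cap \bigl(B(\lambda_0, r) \times (\alpha_0 - s, \alpha_0 + s)\bigr)$ for carefully chosen $r, s > 0$. I will use eigenvalue continuity to create an \emph{annular gap} in the spectrum that isolates the $k = \mult(\lambda_0, \alpha_0)$ eigenpaths through $(\lambda_0, \alpha_0)$ from the rest of the spectrum, and then a genericity condition on $s$ to secure the no-singular-ambiguities requirement of (c). The main obstacle will be this genericity step.

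For the construction, set $d = \min\{|\lambda_0 - \mu| \mid \mu \in \spec(\CC_{\alpha_0}),\ \mu \neq \lambda_0\}$, interpreted as $+\infty$ when $\spec(\CC_{\alpha_0}) = \{\lambda_0\}$, and fix $r > 0$ with $2r < \min(\varepsilon, d)$. Fix a $\CC$-eigenpath set $\{\gamma_1, \ldots, \gamma_n\}$, and using continuity of each $\gamma_j$ pick $s_0 > 0$ so that $|\gamma_j(\alpha) - \gamma_j(\alpha_0)| < r/2$ for every $j$ and every $\alpha \in [\alpha_0 - s_0, \alpha_0 + s_0]$. Combined with $d > 2r$, this forces each eigenvalue of $\CC_\alpha$ on this time window to lie either in $B(\lambda_0, r/2)$ (when it is $\gamma_j(\alpha)$ for some $j$ with $\gamma_j(\alpha_0) = \lambda_0$) or outside $\overline{B}(\lambda_0, 3r/2)$; in particular no eigenvalue enters the annulus $\{r/2 \leq |\lambda - \lambda_0| \leq 3r/2\}$.

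For the genericity step I need to refine $s \in (0, s_0]$ so that neither $\CC_{\alpha_0 - s}$ nor $\CC_{\alpha_0 + s}$ has a singular ambiguity in $\overline{B}(\lambda_0, r/2)$. Let $f \colon [0,1] \to \N$ count the distinct eigenvalues of $\CC_\alpha$. By eigenvalue continuity $f$ is lower semi-continuous, and since $f$ takes values in $\{1, \ldots, n\}$, each $\{f \leq c\}$ is closed with nowhere-dense boundary; hence the discontinuity set of $f$ is a finite union of nowhere-dense sets, itself nowhere dense in $[0,1]$. I claim that the $\alpha$-projection $\pi(S)$ of the set $S$ of singular ambiguities lies in this discontinuity set: a singular ambiguity $(\mu_0, \alpha_0')$ admits, by definition, arbitrarily nearby $(\mu', \alpha') \in E_\CC$ with $\mult(\mu', \alpha') < \mult(\mu_0, \alpha_0')$, which forces the cluster around $\mu_0$ to split into at least two distinct eigenvalues at $\alpha'$ and so $f(\alpha') > f(\alpha_0')$, a discontinuity of $f$ at $\alpha_0'$. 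Thus $\pi(S)$ is nowhere dense, and $\{s \in (0, s_0) \mid \{\alpha_0 - s, \alpha_0 + s\} \cap \pi(S) = \emptyset\}$ is a dense open subset of $(0, s_0)$, from which I pick $s$.

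With this choice, let $\OO = E_\CC \cap \bigl(B(\lambda_0, r) \times (\alpha_0 - s, \alpha_0 + s)\bigr)$. By the annular gap, $\OO$ equals the union of the graphs of the eigenpaths $\gamma_j$ with $\gamma_j(\alpha_0) = \lambda_0$ restricted to $(\alpha_0 - s, \alpha_0 + s)$, each of which passes through $(\lambda_0, \alpha_0)$, so $\OO$ is connected and open in $E_\CC$. Property (a) follows since any two points of $\OO$ with a common $\alpha$-coordinate lie in $B(\lambda_0, r/2)$, giving diameter $\leq r < \varepsilon$. For (b), suppose $(\gamma(\alpha), \alpha) \in \overline{\OO}$, so $|\gamma(\alpha) - \lambda_0| \leq r/2$ and $|\alpha - \alpha_0| \leq s$; the continuous function $t \mapsto |\gamma(t) - \lambda_0|$ on $[\min(\alpha, \alpha_0), \max(\alpha, \alpha_0)]$ starts at a value $\leq r/2$ and takes values only in $[0, r/2] \cup (3r/2, \infty)$ by the gap, so it stays in $[0, r/2]$, whence $|\gamma(\alpha_0) - \lambda_0| \leq r/2 < d$ forces $\gamma(\alpha_0) = \lambda_0$. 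For (c), $\partial \OO \subseteq \bigl(\spec(\CC_{\alpha_0 - s}) \cup \spec(\CC_{\alpha_0 + s})\bigr) \cap \overline{B}(\lambda_0, r/2)$ is finite, and by the choice of $s$ contains no singular ambiguities.
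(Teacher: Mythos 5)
For (a) and (b), your construction tracks the paper's proof quite closely: both isolate the $\mult(\lambda_0,\alpha_0)$ eigenpaths through $(\lambda_0,\alpha_0)$ by exploiting the positive spectral gap $d$ at $\alpha_0$ together with a short time window on which no eigenpath can cross the resulting annulus. Your treatment of (c), however, is a genuinely different route. The paper arranges (c) by excising small neighborhoods around each singular ambiguity lying on the boundary of the preliminary tube, asserting that those neighborhoods may be chosen with boundaries free of singular ambiguities -- an assertion that, as written, appears to presuppose a version of the very property being established. You instead observe that the $\alpha$-projection of the set of singular ambiguities is contained in the discontinuity set of the integer-valued, lower semicontinuous eigenvalue-count function $f$, and that this discontinuity set is a finite union of (closed, nowhere-dense) boundaries $\partial\{f \leq c\}$, hence closed and nowhere dense; a Baire-category choice of the tube's time-endpoints then delivers (c) with no excision step at all. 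The splitting argument you give -- that near a singular ambiguity the eigenvalue cluster around $\mu_0$ must separate at some arbitrarily close $\alpha'$, forcing $f(\alpha') > f(\alpha_0')$ while lower semicontinuity prevents $f$ from dropping -- is correct and is the crux of the containment $\pi(S) \subseteq \operatorname{Disc}(f)$. This buys a cleaner and more self-contained argument than the paper's. One small point worth spelling out: to conclude that $\{s \in (0,s_0) : \{\alpha_0-s,\,\alpha_0+s\} \cap \pi(S) = \emptyset\}$ is \emph{open} you should note that $\pi(S)$ is closed, either because your bounding set $\bigcup_c \partial\{f \leq c\}$ is closed or via the compactness of the set of singular ambiguities from Lemma~\ref{CompactSALem}; density alone already suffices to pick $s$, so this is a minor omission.
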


\begin{proof}
Fix some $\CC$-eigenpath set $\{\gamma_1, ..., \gamma_n\}$, and find the minimum distance $d>0$ from $\lambda_0$ to $\gamma_j(\alpha_0)$ for $\gamma_j$ such that $\gamma_j(\alpha_0) \neq \lambda_0$. By continuity of the $\gamma_j$, we know there is an open interval $U' \subseteq [0,1]$ containing $\alpha$ such that $|\gamma_j(\alpha) - \gamma_j(\alpha_0)| < \frac{d}{2}, \frac{\varepsilon}{2}$ for all $\alpha \in U'$ and $j=1,...,n$. Now let $\OO ' = \{(\gamma_j(\alpha), \alpha) \mid \alpha \in U', \; \gamma_j(\alpha_0) = \lambda_0\}$. None of the paths that hit $\lambda_0$ at $\alpha_0$ will intersect any other path at any point in $U'$, and every eigenvalue $\lambda$ such that some $(\lambda, \alpha)$ lies in $\OO '$ is less than $\frac{\varepsilon}{2}$ away from $\lambda_0$. Therefore the open set $\OO '$ satisfies properties (a) and (b).

Now for each boundary point $(\lambda, \alpha) \in \partial \OO '$ that is a singular ambiguity, find a connected open neighborhood $V_{\lambda, \alpha}$ of $(\lambda, \alpha)$ whose closure does not contain $(\lambda_0, \alpha_0)$. By definition of singular ambiguities, we may arrange for the boundary of $V_{\lambda, \alpha}$ to contain no singular ambiguities. Define $\OO = \OO ' \backslash \overline{V}$ where $\overline{V}$ is the closure of the union of the $V_{\lambda, \alpha}$. Now $\OO $ satisfies property (c) and inherits the remaining properties from $\OO '$.
\end{proof}

\begin{Def}
If $\OO $ is the set obtained from the previous lemma, we say that the \emph{center} of $\OO $ is $(\lambda_0, \alpha_0)$.
\end{Def}

\begin{Lemma}\label{FiniteCoverLem}
Let $\varepsilon > 0$. Then there is an open cover $\OO _1, ..., \OO _r$ of the singular ambiguities of $E_\CC$ such that each $\OO _k$ is connected and satisfies the following:
\begin{enumerate}
    \item The diameter of $\OO _k$ is less than $\varepsilon$.
    
    \item There is a singular ambiguity $(\lambda_k, \alpha_k) \in \OO _k$ such that any $\CC$-eigenpath $\gamma$ passing through $\overline{\OO _k}$ satisfies $\gamma(\alpha_k) = \lambda_k$.
    
    \item The boundary $\partial \OO _k$ of $\OO _k$ contains finitely many points, none of which are singular ambiguities.
\end{enumerate}
\end{Lemma}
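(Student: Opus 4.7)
The plan is to combine Lemma \ref{SeparationLem} with the compactness established in Lemma \ref{CompactSALem}. By Lemma \ref{SeparationLem}, for every singular ambiguity $(\lambda,\alpha) \in E_\CC$, I can produce an open neighborhood $\OO_{\lambda,\alpha} \subseteq E_\CC$ that is connected, has diameter less than $\varepsilon$, has the property that every $\CC$-eigenpath passing through its closure must hit $\lambda$ at $\alpha$, and whose boundary is a finite set containing no singular ambiguities. In other words, each such $\OO_{\lambda,\alpha}$ already witnesses all three desired conditions (a), (b), (c), with $(\lambda,\alpha)$ serving as its center.

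The collection $\{\OO_{\lambda,\alpha}\}$, taken as $(\lambda,\alpha)$ ranges over all singular ambiguities, is an open cover (in the subspace topology of $E_\CC$) of the set $Z$ of singular ambiguities. By Lemma \ref{CompactSALem}, $Z$ is compact, so I can extract a finite subcover $\OO_1,\dots,\OO_r$. Since the three properties from Lemma \ref{SeparationLem} were verified for each $\OO_{\lambda,\alpha}$ individually, they pass down verbatim to the finite subcollection: each $\OO_k$ is connected, has diameter less than $\varepsilon$, has a designated center $(\lambda_k,\alpha_k)$ through which every passing $\CC$-eigenpath is forced, and has a finite, singular-ambiguity-free boundary.

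The main potential obstacle is really just a bookkeeping one, namely making sure compactness is applied in the right ambient space. The set $Z$ is compact as a subset of $E_\CC$ (equivalently, of $\C \times [0,1]$), and the $\OO_{\lambda,\alpha}$ are open in $E_\CC$, so the standard Heine--Borel-type extraction of a finite subcover applies without issue. Once that is in place, no additional argument is required, because properties (a)--(c) are pointwise conditions on each $\OO_k$ and were already guaranteed by the construction of Lemma \ref{SeparationLem}. Thus the proof should be short, reducing essentially to invoking these two prior results in sequence.
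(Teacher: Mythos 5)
Your proof is correct and matches the paper's argument exactly: apply Lemma \ref{SeparationLem} at each singular ambiguity to build an open cover whose members already satisfy (a)--(c), then use the compactness from Lemma \ref{CompactSALem} to extract a finite subcover.
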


\begin{proof}
Let $(\lambda_0, \alpha_0) \in E_\CC$ be a singular ambiguity, so by Lemma \ref{SeparationLem} we may find a connected open neighborhood $\OO _{\lambda_0, \alpha_0}$ of $(\lambda_0, \alpha_0)$ satisfying properties (a) through (c). Repeating this process for each singular ambiguity, the resulting open sets cover the set of all singular ambiguities. Then by Lemma \ref{CompactSALem}, there is a finite subcover $\OO _1,...,\OO _r$.
\end{proof}

\begin{Lemma}\label{EigenvalueDeltaLem}
Let $\varepsilon>0$ and let $\CC$ be a matrix path. Then there is a $\delta > 0$ such that for any matrix path $\CC'$ with $\|\CC_\alpha - \CC_\alpha'\|<\delta$ for all $\alpha \in [0,1]$, there are orderings $\lambda_{\alpha,1},...,\lambda_{\alpha,n}$ and $\lambda_{\alpha,1}',...,\lambda_{\alpha,n}'$ of the eigenvalues of each $\CC_\alpha$ and $\CC_\alpha'$ such that $|\lambda_{\alpha,j}-\lambda_{\alpha,j}'|$ for $\alpha \in [0,1]$ and $j=1,...,n$.
\end{Lemma}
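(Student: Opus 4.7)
The plan is to reduce this lemma, which is purely pointwise in $\alpha$, to a standard eigenvalue perturbation bound applied uniformly across the compact parameter interval. Continuity in $\alpha$ plays no role in the conclusion; what we are really asserting is that the map sending a matrix to its multiset spectrum is uniformly continuous on bounded subsets of $M_n$, and that the image of $\CC$ is a bounded subset.

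First I would establish a uniform norm bound. Since $\CC : [0,1] \to M_n$ is continuous on a compact interval, the image $\{\CC_\alpha : \alpha \in [0,1]\}$ is compact, hence there is some $M$ with $\|\CC_\alpha\| \le M$ for all $\alpha$. Requiring at the outset that $\delta \le 1$, every admissible $\CC'$ then satisfies $\|\CC'_\alpha\| \le M+1$ uniformly in $\alpha$.

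Second I would invoke a standard matrix perturbation inequality of Elsner/Ostrowski type (available, for instance, in \cite{Bounds} or \cite{PertTheoryKato}): there is a constant $c_n$ depending only on $n$, and a continuity modulus of the form $\phi(\delta) = c_n (M+1)^{1-1/n} \delta^{1/n}$, such that for any two matrices $A, B \in M_n$ with $\|A\|, \|B\| \le M+1$, the multisets $\spec(A)$ and $\spec(B)$ can be enumerated as $\lambda_1,\ldots,\lambda_n$ and $\mu_1,\ldots,\mu_n$ with $\max_j |\lambda_j - \mu_j| \le \phi(\|A-B\|)$. Equivalently: roots of monic polynomials of fixed degree are uniformly continuous functions of the coefficients on bounded sets, and the coefficients of the characteristic polynomial depend polynomially (hence uniformly Lipschitz on bounded sets) on the matrix entries.

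Third I would finish by choosing $\delta > 0$ so that $\delta \le 1$ and $\phi(\delta) < \varepsilon$, and applying the above matching bound to $A = \CC_\alpha$ and $B = \CC'_\alpha$ separately for each $\alpha \in [0,1]$; the orderings obtained (allowed to depend on $\alpha$) furnish exactly the enumerations required by the statement. The only mildly subtle point, which I consider the main technical obstacle, is insisting on a \emph{bijective} matching of spectra rather than a one-sided Hausdorff bound; this is precisely what the Elsner-type optimal matching theorem delivers, so the step is routine once the correct reference is identified.
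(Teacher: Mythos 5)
Your proposal is essentially the same argument the paper gives: bound $\|\CC_\alpha\|$ uniformly by compactness of $[0,1]$, bound $\|\CC'_\alpha\|$ using $\delta$, and invoke the optimal-matching eigenvalue perturbation inequality from \cite{Bounds} (of Elsner/Bhatia type, with the $\|A-B\|^{1/n}$ H\"older modulus) to choose $\delta$ from $\varepsilon$. The paper just carries out the arithmetic with the explicit constant $4\cdot 2^{-1/n}$ from \cite{Bounds} instead of a generic $c_n$, and your remark that the bijective matching (rather than a one-sided Hausdorff estimate) is the essential content of the cited bound is exactly the right thing to flag.
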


\begin{proof}
Since the uniform norm $\|\cdot\|$ is equivalent to other generalized matrix norms, it suffices to prove this lemma for the Frobenius norm $\|\cdot\|_2$ given by $\|(a_{i,j})\|_2 = \sum_{i,j} |a_{i,j}|^{1/2}$.

Let $m = \max\{\|\CC_\alpha\|_2 \mid \alpha \in [0,1]\}$ and $\delta = \min \left\{ 2m, \frac{\varepsilon^n}{2^{4n-3}m^{n-1}} \right\}$. Let $\CC'$ be a matrix path $[0,1] \to M_n$ such that $\|\CC_\alpha - \CC_\alpha'\|_2 < \delta$ for all $\alpha \in [0,1]$. For $j=1,...,n$ denote by $\lambda_{\alpha, j}$ and $\lambda_{\alpha, j}'$ the eigenvalues of $\CC_\alpha$ and $\CC_\alpha'$, respectively. Then by the bound in \cite{Bounds}, we may reorder these eigenvalues so that
\begin{align*}
    |\lambda_{\alpha, j} - \lambda_{\alpha, j}'| \leq 4 \times 2^{-1/n} \left(\|C_{\alpha}\|_2 + \|C_{\alpha}'\|_2 \} \right)^{1-1/n} \| C_{\alpha} - C_{\alpha}' \|_2^{1/n},
\end{align*}
and so
\begin{align*}
    |\lambda_{\alpha, j} - \lambda_{\alpha, j}'|^n &< \frac{4^n}{2} \left( 2m + \delta \right)^{n-1} \delta \\
    &\leq 2^{2n-1} (4m)^{n-1} \delta \\
    &\leq 2^{4n-3} m^{n-1} \times \frac{\varepsilon^n}{2^{4n-3} m^{n-1}} \\
    &= \varepsilon^n,
\end{align*}
giving us the bound
\begin{align*}
    |\lambda_{\alpha, j} - \lambda_{\alpha, j}'| < \varepsilon
\end{align*}
for all $\alpha \in [0,1]$ and $j=1,...,n$.
\end{proof}

We now have the machinery necessary to prove Theorem \ref{ClosePathsThm}.

\begin{proof}
Let $\OO _1, ..., \OO _r$ be the open cover of Lemma \ref{FiniteCoverLem} such that each $\OO _k$ has diameter less than $\frac{\varepsilon}{2}$. Define $F = E_\CC \backslash \left( \bigcup_{k=1}^r \OO _k \right)$, so $F$ consists of closed connected components such that $\mult(\lambda, \alpha)$ is constant for all $(\lambda, \alpha)$ in the component. Since the $\OO _k$ have finite boundaries that contain no singular ambiguities, we know that the boundary of $F$ shares this property. Hence $F$ consists of finitely many such connected components $F_1,...,F_s$. For each pair $i,j$ with $1 \leq i < j \leq s$, define $\varepsilon_{i,j} = \inf\{|\lambda_i-\lambda_j| \mid (\lambda_i, \alpha) \in F_i, \; (\lambda_j, \alpha) \in F_j\}$. Then $\varepsilon_{i,j} > 0$, so we set $\varepsilon_0 = \min_{i,j} \varepsilon_{i,j}$. Since each $\partial \OO _k$ contains no singular ambiguities, there is some $\delta_k > 0$ such that $|\lambda - \lambda'| > \delta_k$ for all $\lambda, \lambda'$ such that $(\lambda, \alpha) \in \OO _k$ and $(\lambda', \alpha) \in E_\CC \backslash \OO _k$. Let $\varepsilon_0' = \min\{\delta_1, ..., \delta_r\}$, and define $\varepsilon_* = \frac{1}{2} \min\{\varepsilon_0, \varepsilon_0', \varepsilon\}$.

Now by Lemma \ref{EigenvalueDeltaLem} there is a $\delta > 0$ such that any matrix path $\CC_\alpha'$ that is $\delta$-close to $\CC_\alpha$ has eigenvalues that are $\varepsilon_*$-close to those of $\CC_\alpha$. Let $\CC'$ be such a path, and for $j=1,...,n$ denote by $\lambda_{\alpha, j}$ and $\lambda_{\alpha, j}'$ the eigenvalues of $\CC_\alpha$ and $\CC_\alpha'$, respectively. Then we have $|\lambda_{\alpha, j} - \lambda_{\alpha, j}'| < \varepsilon_*$ for all $j=1,...,n$ and $\alpha \in [0,1]$.

Take $\gamma_j'(\alpha) = \lambda_{\alpha, j}'$ to be continuous, so $\{\gamma_1', ..., \gamma_n'\}$ is a $\CC'$-eigenpath set. We will now find a ``matching" $\CC$-eigenpath set $\{\gamma_1, ..., \gamma_n\}$ by defining each $\gamma_j$ piecewise. Define each $\gamma_j(0)$ such that $|\gamma_j(0) - \gamma_j'(0)| < \varepsilon_*$ for $j=1,...,n$. Since $\varepsilon_* \leq \frac{1}{2}\varepsilon_0, \frac{1}{2}\varepsilon_0'$, we may continue to define each path $\gamma_j$ in such a way that $|\gamma_j(\alpha) - \gamma_j'(\alpha)| < \varepsilon_*$ until some $\gamma_j$ passes through some $\OO _k$. Then, since the diameter of each $\OO _k$ is less than $\frac{\varepsilon}{2}$ and since $\varepsilon_* \leq \frac{\varepsilon}{2}$, we have $|\gamma_i(\alpha) - \gamma_j'(\alpha)| < \varepsilon$ for any $\gamma_i, \gamma_j$ passing through $\OO _k$. It follows that we may define every path $\gamma_j$ to satisfy $|\gamma_j(\alpha) - \gamma_j'(\alpha)| < \varepsilon$ for all $\alpha \leq \alpha_k$ where $(\lambda_k, \alpha_k)$ is the first center that $\gamma_j$ must hit. Since all paths that pass through a certain $\OO _k$ must hit its center by property (b) of Lemma \ref{FiniteCoverLem}, this $\alpha_k$ is well-defined for each $\gamma_j$.

Now order $\alpha_1, ..., \alpha_r$ such that $\alpha_1 \leq ... \leq \alpha_r$. We will proceed by induction. Suppose that each $\gamma_j$ is defined up to one of the points $\alpha_1, ..., \alpha_r$, and all are defined up to at least $\alpha_{k-1}$. Further, suppose that if some $\gamma_j$ is defined up to some $\alpha_\ell$, then $\gamma_j$ hits the center $(\lambda_\ell, \alpha_\ell)$, and no other path that may hit this center is defined past $\alpha_\ell$. Let $\gamma_1, ..., \gamma_t$ be the paths that hit the center $(\lambda_{k-1}, \alpha_{k-1})$. Since $\varepsilon_* \leq \frac{\varepsilon_0}{2}, \frac{\varepsilon_0'}{2}$, we may use similar reasoning as in the above paragraph to define these paths from $\alpha_{k-1}$ to the next center they hit (or to 1) in such a way that $|\gamma_j(\alpha) - \gamma_j'(\alpha)| < \varepsilon$ for all $\alpha$ up to the next center hit by $\gamma_j$. Then all conditions in the inductive hypothesis still hold, but all paths are now defined up to at least $\alpha_k$. It follows that we may continue to define these paths up to $\alpha=1$, finishing our proof.
\end{proof}

\begin{Cor}\label{CommonPairingCor}
There is a $\delta > 0$ such that for all continuous paths $\CC'$ from $A$ to $B$ satisfying $\|\CC_\alpha - \CC_\alpha'\|  < \delta$ for all $\alpha \in [0,1]$, any $\CC'$-eigenpairing $p : \spec(A) \to \spec(B)$ is also a $\CC$-eigenpairing.
\end{Cor}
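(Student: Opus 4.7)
The plan is to derive this directly from Theorem \ref{ClosePathsThm} by choosing the approximation parameter $\varepsilon$ small enough that the endpoint values of approximating eigenpaths are forced to match exactly. Since $\CC$ and $\CC'$ share endpoints, $\CC_0 = \CC_0' = A$ and $\CC_1 = \CC_1' = B$, the endpoints of every $\CC$-eigenpath and every $\CC'$-eigenpath lie in $\spec(A) \cup \spec(B)$, which is a finite set of values.

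First I would fix $\eta > 0$ strictly less than half the minimum distance between any two distinct values appearing in $\spec(A) \cup \spec(B)$ (taking $\eta$ to be any positive number if this union consists of a single value). Then I apply Theorem \ref{ClosePathsThm} with $\eta$ in place of $\varepsilon$ to obtain a $\delta > 0$ with the property asserted there.

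Now suppose $\CC'$ is any matrix path from $A$ to $B$ satisfying $\|\CC_\alpha - \CC'_\alpha\| < \delta$ for all $\alpha \in [0,1]$, and let $p$ be a $\CC'$-eigenpairing arising from some $\CC'$-eigenpath set $\{\gamma_1', \ldots, \gamma_n'\}$. Theorem \ref{ClosePathsThm} produces a $\CC$-eigenpath set $\{\gamma_1, \ldots, \gamma_n\}$ with $|\gamma_j(\alpha) - \gamma_j'(\alpha)| < \eta$ for all $\alpha$ and $j$. Evaluating at $\alpha = 0$, both $\gamma_j(0)$ and $\gamma_j'(0)$ lie in $\spec(A)$ and differ by less than $\eta$, so by the choice of $\eta$ they coincide as values; the identical argument at $\alpha = 1$ yields $\gamma_j(1) = \gamma_j'(1)$. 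Thus the bijection $\gamma_j(0) \mapsto \gamma_j(1)$ induced by $\{\gamma_j\}$ is the same as $\gamma_j'(0) \mapsto \gamma_j'(1)$, which is $p$. Hence $p$ is a $\CC$-eigenpairing.

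The only potential wrinkle concerns repeated eigenvalues, where different labelings of the multiset $\spec(A)$ can a priori produce distinct pairings. I expect this to pose no real obstacle: since $\gamma_j(0) = \gamma_j'(0)$ and $\gamma_j(1) = \gamma_j'(1)$ hold as equalities of values for each index $j$, the multiplicity of every ordered pair $(\lambda, \mu) \in \spec(A) \times \spec(B)$ in the induced multiset bijection is the same for the two eigenpath sets, so the pairings agree on the nose. The main substantive step is really the invocation of Theorem \ref{ClosePathsThm}; once one has uniform $\eta$-closeness of eigenpaths, the discreteness of the spectra at $\alpha = 0, 1$ does the rest.
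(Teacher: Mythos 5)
Your proof is correct and follows essentially the same route as the paper's: apply Theorem \ref{ClosePathsThm} with a tolerance smaller than the separation between distinct spectral values at the endpoints, then observe that $\eta$-closeness at $\alpha = 0, 1$ forces the endpoint values to coincide exactly, so the two eigenpath sets induce the same pairing. The only cosmetic differences are that you take $\eta$ to be less than half the minimum gap over $\spec(A) \cup \spec(B)$, whereas the paper uses $\varepsilon = \min\{\varepsilon_A, \varepsilon_B\}$ with $\varepsilon_A, \varepsilon_B$ the minimum gaps within $\spec(A)$ and $\spec(B)$ separately (the factor of one-half is unnecessary since the bound from Theorem \ref{ClosePathsThm} is strict), and the paper disposes of the degenerate case where $A$ or $B$ has only one distinct eigenvalue by declaring it trivial up front, while your choice of $\eta$ over $\spec(A) \cup \spec(B)$ absorbs that case uniformly. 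Your closing remark about multiplicities of ordered pairs $(\lambda,\mu)$ correctly pins down that the two induced multiset bijections coincide, which is the same conclusion the paper draws more tersely.
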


\begin{proof}
If either $A$ or $B$ has just one distinct eigenvalue, then the claim is trivial. Otherwise let $\varepsilon_A$ be the minimum distance between distinct eigenvalues of $A$, and similarly define $\varepsilon_B$. Now define $\varepsilon = \min\{\varepsilon_A, \varepsilon_B\}$, and obtain the $\delta$ of Theorem \ref{ClosePathsThm}. Then for any continuous path $\CC'$ from $A$ to $B$ satisfying $\|\CC_\alpha - \CC_\alpha'\|  < \delta$ for all $\alpha \in [0,1]$, we may find for any $\CC'$-eigenpath set $\{\gamma_1', ..., \gamma_n'\}$ a $\CC$-eigenpath set $\{\gamma_1, ..., \gamma_n\}$ such that $|\gamma_j(\alpha) - \gamma_j'(\alpha)| < \varepsilon$ for all $\alpha \in [0,1]$ and $j=1,...,n$. It follows that $\gamma_j(0) = \gamma_j'(0)$ and $\gamma_j(1) = \gamma_j'(1)$ for each $j$, so these path sets induce the same pairing.
\end{proof}


\section{Proof of Theorem 2}

In the previous section, we saw that small perturbations of the matrix path $\CC$ yield small perturbations of the corresponding eigenpaths. Dual to this notion is whether there exists an arbitrarily small perturbation of $\CC$ that admits no ambiguities and whose unique eigenpath set approximates a given $\CC$-eigenpath set. This section is dedicated to proving the validity of this statement, which indicates that there is no canonical choice of $\CC$-eigenpath set without imposing supplementary conditions.

To complete this proof, we will first approximate $\CC$ by a polynomial matrix path $\PP$ that coincides with $\CC$ at certain important points (namely, the center singular ambiguities of the cover from Lemma \ref{FiniteCoverLem}). This approximation will be such that any $\CC$-eigenpath set can be approximated by a $\PP$-eigenpath set. Then it will suffice to prove the theorem in the case that $\CC$ is a polynomial matrix path with no nonsingular ambiguities. For this case, we will provide a construction that allows us to ``rip apart" the singular ambiguities into non-intersecting eigenpaths.

\begin{Lemma}\label{WeierstrassLem}
Let $\varepsilon > 0$ and let $\CC$ be a matrix path. Further, let $\alpha_1, ..., \alpha_r \in [0,1]$. Then there is a matrix path $\PP$ such that
\begin{enumerate}
    \item the entries of $\PP_\alpha$ are complex polynomials in $\alpha$.
    
    \item for $k=1,...,r$, we have $\PP_{\alpha_k} = \CC_{\alpha_k}$.
    
    \item for $\alpha \in [0,1]$, we have $\|\PP_\alpha - \CC_\alpha\| < \varepsilon$.
    
    \item all ambiguities in $E_\PP$ are singular.
\end{enumerate}
\end{Lemma}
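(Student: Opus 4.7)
The plan is to build $\PP$ in three stages: a Weierstrass approximation for (a), a Lagrange correction for (b), and a one-parameter polynomial perturbation vanishing at every $\alpha_k$ chosen so that the resulting characteristic polynomial becomes squarefree in $t$, which will give (d). I may assume the $\alpha_k$ are distinct after discarding duplicates.

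For the first two stages, I apply Weierstrass entrywise to $\CC$ to obtain a polynomial matrix path $\tilde{P}$ with $\|\tilde{P} - \CC\| < \eta$, where $\eta > 0$ is a tolerance to be fixed at the end. Writing $L_k(\alpha) := \prod_{j \neq k} \frac{\alpha - \alpha_j}{\alpha_k - \alpha_j}$ for the Lagrange basis and $M := \sup_{\alpha \in [0,1], k} |L_k(\alpha)|$, I set
$$P_0(\alpha) := \tilde{P}(\alpha) + \sum_{k=1}^r L_k(\alpha)\bigl(\CC_{\alpha_k} - \tilde{P}(\alpha_k)\bigr),$$
so that $P_0$ is polynomial in $\alpha$, satisfies $P_0(\alpha_k) = \CC_{\alpha_k}$, and obeys $\|P_0 - \CC\| \leq \eta(1 + rM)$. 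For the third stage I let $p(\alpha) := \prod_k (\alpha - \alpha_k)$ and $D := \mathrm{diag}(1, \dots, n)$, and consider the family $\PP^{(s)}_\alpha := P_0(\alpha) + s\, p(\alpha)\, D$ parameterized by $s \in \C$. Every $\PP^{(s)}$ is polynomial (so (a) holds) and matches $\CC$ at each $\alpha_k$ (so (b) holds), and choosing both $\eta$ and $|s|$ small enough forces $\|\PP^{(s)} - \CC\| < \varepsilon$, giving (c).

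The main obstacle is (d). By Proposition \ref{AnalyticityThm} combined with Lemma \ref{AnalyticityInts}(a), the characteristic polynomial $\chi_s(\alpha, t) := \det(tI - \PP^{(s)}_\alpha) \in \C[\alpha][t]$ decomposes as $\prod_j q_j^{m_j}$ with the $q_j$ pairwise coprime and irreducible: if some $m_j \geq 2$, then any $\alpha$ at which $q_j$ has a root not shared with another $q_i$ gives a nonsingular ambiguity of multiplicity $m_j$; conversely, if all $m_j = 1$, every ambiguity occurs at one of the finitely many intersections of distinct $q_i, q_j$ guaranteed by Lemma \ref{AnalyticityInts}(c), and is therefore singular. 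So (d) amounts to $\chi_s$ being squarefree in $t$ over $\C(\alpha)$, equivalently to its $t$-discriminant $\Delta_s(\alpha) \in \C[\alpha]$ being a nonzero polynomial. Viewing $\Delta$ jointly as $\Delta(s,\alpha) \in \C[s,\alpha]$ and writing $\Delta(s,\alpha) = \sum_j c_j(s)\alpha^j$, the bad set $B := \{s \in \C : \Delta_s \equiv 0\}$ equals the common zero locus of the $c_j(s)$, so $B$ is either all of $\C$ or finite. To exclude the first case I fix any $\alpha^* \in [0,1]$ with $p(\alpha^*) \neq 0$ and note that $\PP^{(s)}_{\alpha^*} = s\,p(\alpha^*)\bigl(D + \tfrac{1}{s\,p(\alpha^*)}P_0(\alpha^*)\bigr)$, whose eigenvalues for $|s|$ large are small perturbations of the distinct values $s\,p(\alpha^*)\,d_1, \dots, s\,p(\alpha^*)\,d_n$, so $\Delta(s,\alpha^*) \neq 0$ for all sufficiently large $|s|$. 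Thus $B$ is finite; selecting $s \notin B$ that also satisfies the norm bound and setting $\PP := \PP^{(s)}$ completes the construction.
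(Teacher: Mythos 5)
Your proof is correct, but it takes a genuinely different route from the paper's for the key step (d). The paper first perturbs $\CC$ slightly on a small interval around an auxiliary point $\alpha_0 \notin \{\alpha_1,\dots,\alpha_r\}$ so that the modified path hits a matrix $A$ with $n$ distinct eigenvalues at $\alpha_0$ (using density of such matrices), then applies Weierstrass with the values at $\alpha_0, \alpha_1, \dots, \alpha_r$ fixed; since the resulting polynomial path has $n$ distinct eigenvalues at $\alpha_0$, no irreducible factor of its characteristic polynomial can be repeated, and (d) follows from Lemma \ref{PathMultiplicityLem}. You instead leave the path untouched on $[0,1]$, impose (b) via a Lagrange correction, and then introduce a one-parameter perturbation $s\,p(\alpha)D$ vanishing at the interpolation nodes, arguing by a Bertini-style genericity argument on the $t$-discriminant that all but finitely many $s$ make the characteristic polynomial squarefree over $\C(\alpha)$. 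The two constructions are essentially dual: the paper forces squarefreeness by ensuring a specific good value of $\alpha$ is hit, while you force it by ensuring a generic good value of the perturbation parameter $s$ is chosen. The paper's version is shorter and leans on material already developed in Section \ref{SectionAnalytic}; yours is more self-contained, makes the Lagrange interpolation explicit (the paper only gestures at ``fixing values'' in Weierstrass), and the discriminant argument generalizes more cleanly. One small imprecision worth noting: in your characterization of (d) you say that when all $m_j=1$ the ambiguities occur only at intersections of distinct factors $q_i, q_j$, but they can also occur where a single factor $q_j$ develops a multiple root (Lemma \ref{AnalyticityInts}(b)); these points are also isolated and yield singular ambiguities, so the conclusion stands, but the case should be mentioned.
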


\begin{proof}
Let $\alpha_0 \in [0,1] \backslash\{\alpha_1, ..., \alpha_r\}$. Since the set $M_n'$ of matrices with no repeated eigenvalues in dense in $M_n$, we may find $A \in M_n'$ such that $\|A-\CC_{\alpha_0}\| < \frac{\varepsilon}{2}$. Now define the matrix path $\CC'$ to coincide with $\CC$ except on a small neighborhood of $\alpha_0$, where it deviates by less than $\frac{\varepsilon}{2}$ to hit $A$. We then use the Weierstrass approximation theorem to give a uniform $\frac{\varepsilon}{2}$-approximation of each entry of $\CC_\alpha'$ while fixing the values at $\alpha_0, \alpha_1, ..., \alpha_r$. These approximations form a polynomial matrix path $\PP$ satisfying (a), (b), and (c). Since $\PP_{\alpha_0}$ has $n$ distinct eigenvalues, property (d) also follows by Lemma \ref{PathMultiplicityLem}.
\end{proof}

\begin{Lemma}\label{ConvexApproxLem}
Let $\varepsilon > 0$, and let $A, B \in M_n$ with $C_\alpha = (1-\alpha)A + \alpha B$ the convex path between them. If $A$ and $B$ have distinct eigenvalues, then there is a matrix path $\CC$ from $A$ to $B$ such that $E_\CC$ contains no ambiguities and $\|\CC_\alpha - C_\alpha\| < \varepsilon$ for all $\alpha \in [0,1]$.
\end{Lemma}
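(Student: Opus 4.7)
My plan is to first approximate $C$ by a polynomial matrix path $\PP$ with matching endpoints via Lemma \ref{WeierstrassLem}, so that by Corollary \ref{SingularAmbiguityBound} only finitely many (singular) ambiguities remain, and then to perturb $\PP$ by a generic $\alpha(1-\alpha)D$ term so that no ambiguities survive at all. Concretely, I would first apply Lemma \ref{WeierstrassLem} with tolerance $\varepsilon/2$ and fixed points $\alpha_1 = 0$, $\alpha_2 = 1$ to produce a polynomial matrix path $\PP$ with $\PP_0 = A$, $\PP_1 = B$, $\|\PP_\alpha - C_\alpha\| < \varepsilon/2$, and every ambiguity in $E_\PP$ singular.

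The core of the argument is the genericity step. Consider perturbations $\CC_\alpha = \PP_\alpha + \alpha(1-\alpha) D$ for $D \in M_n$; these preserve the endpoints and satisfy $\|\CC_\alpha - \PP_\alpha\| \le \|D\|/4$. Let $\Delta(\alpha, D)$ denote the discriminant of the characteristic polynomial of $\CC_\alpha$ in the eigenvalue variable, so $\Delta$ is a polynomial in $\alpha$ and the entries of $D$, and an ambiguity of $\CC$ at some $\alpha$ corresponds exactly to $\Delta(\alpha, D) = 0$. At $\alpha \in \{0,1\}$ the factor $\alpha(1-\alpha)$ vanishes and $\Delta$ is nonzero by the hypothesis on $A, B$. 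For each fixed $\alpha \in (0,1)$, the map $D \mapsto \Delta(\alpha, D)$ is a nontrivial polynomial in the entries of $D$ (choose $D$ so that $\PP_\alpha + \alpha(1-\alpha) D$ is diagonal with distinct entries), so its zero set is a proper complex algebraic subvariety of $M_n$ of real codimension at least $2$. Consequently,
\[
Z = \{(\alpha, D) \in (0,1) \times M_n : \Delta(\alpha, D) = 0\}
\]
is a real semi-algebraic set of real dimension at most $1 + (2n^2 - 2) = 2n^2 - 1$, so by Tarski--Seidenberg its projection $\pi(Z) \subseteq M_n$ also has real dimension at most $2n^2 - 1$, hence Lebesgue measure zero in $M_n \cong \R^{2n^2}$; since $Z$ is closed and $[0,1]$ is compact, $\pi(Z)$ is closed, so its complement is open and dense.

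Finally, I would pick $D \in M_n \setminus \pi(Z)$ with $\|D\| < 2\varepsilon$ and set $\CC_\alpha = \PP_\alpha + \alpha(1-\alpha) D$. This yields a matrix path from $A$ to $B$ whose $\CC_\alpha$ has $n$ distinct eigenvalues for every $\alpha \in [0,1]$, so $E_\CC$ contains no ambiguities by Proposition \ref{AmbiguityProp}, and satisfies $\|\CC_\alpha - C_\alpha\| \le \|\CC_\alpha - \PP_\alpha\| + \|\PP_\alpha - C_\alpha\| < \|D\|/4 + \varepsilon/2 < \varepsilon$. The main obstacle is the genericity step — ruling out the scenario in which every small perturbation of $\PP$ still admits some ambiguity in $(0,1)$ — which is handled by the codimension count on the discriminant variety together with Tarski--Seidenberg (or, equivalently, a Fubini-type dimension argument on the zero set of a nontrivial polynomial).
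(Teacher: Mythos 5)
Your proof is correct, but it takes a considerably heavier route than the paper's. The paper's proof is essentially two lines: the discriminant $d(\alpha)$ of the characteristic polynomial of $C_\alpha = (1-\alpha)A + \alpha B$ is a polynomial in $\alpha$ that is nonzero at $\alpha=0,1$ by hypothesis, hence not identically zero and having only finitely many zeros in $\C$; since $C_\alpha$ makes sense for complex $\alpha$, one simply takes $\varepsilon$-small detours in the complex $\alpha$-plane around the finitely many zeros lying in $(0,1)$ and re-parameterizes back to $[0,1]$, giving a path $\CC$ whose discriminant never vanishes. Your approach instead perturbs by $\alpha(1-\alpha)D$ and shows by a semi-algebraic dimension count (Tarski--Seidenberg, or equivalently a fiberwise codimension-$\geq 2$ argument) that a generic small $D$ kills all ambiguities. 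This is a valid genericity argument and has the virtue of not using the holomorphic-in-$\alpha$ structure explicitly, so it would carry over verbatim to any polynomial (indeed analytic) matrix path with distinct eigenvalues at the endpoints; the cost is invoking real algebraic geometry where the paper gets by with the fact that a nonzero one-variable polynomial has finitely many roots. Two small simplifications to your write-up: the Weierstrass step is vestigial, since $C_\alpha$ is already polynomial in $\alpha$, so you may work directly with $C_\alpha + \alpha(1-\alpha)D$; and the closedness of $\pi(Z)$ is not needed, since ``measure zero'' already guarantees that the complement meets the open ball $\{D : \|D\| < 2\varepsilon\}$.
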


\begin{proof}
Let $d$ be the discriminant of the characteristic polynomial of $C_\alpha$, so $d$ is a polynomial in $\alpha$. Then $C_\alpha$ has a repeated eigenvalue if and only if $d(\alpha) = 0$, which occurs at finitely points $\alpha \in \C$. Define $\CC_\alpha$ by traversing the convex path $C_\alpha$, except taking $\varepsilon$-small detours around any roots $\alpha \in [0,1]$ of $d$. Then each $\CC_\alpha$ has distinct eigenvalues, so we are finished.
\end{proof}

\begin{Lemma}\label{DiagonalPairingLem}
Let $D \in M_n$ be diagonal with distinct diagonal entries $\lambda_1, ..., \lambda_n$, including some $\lambda_i \neq \lambda_j$. Then there is a matrix path $\CC$ from $D$ to itself such that $E_\CC$ has no ambiguities, the unambiguous $\CC$-eigenpairing swaps $\lambda_i$ and $\lambda_j$ but fixes the rest of the eigenvalues, and $\|\CC_\alpha - D\| < |\lambda_i - \lambda_j|$ for all $\alpha \in [0,1]$.
\end{Lemma}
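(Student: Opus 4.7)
\emph{Proof sketch.} The plan is to localize the entire perturbation to the $2\times 2$ sub-block at positions $\{i,j\}$, so that $n-2$ of the eigenvalues remain identically fixed and only a single $2\times 2$ problem needs analysis. Specifically, set $\CC_t = D + f(t)(E_{ij} + E_{ji})$ where $f : [0,1] \to \C$ is a continuous loop with $f(0) = f(1) = 0$ to be chosen. After conjugating by the permutation that brings the $i,j$ rows and columns together, $\CC_t$ becomes block-diagonal with the $2\times 2$ block $\begin{pmatrix}\lambda_i & f(t) \\ f(t) & \lambda_j\end{pmatrix}$ and the $1\times 1$ blocks $\lambda_k$ ($k \neq i,j$). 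In particular, $\spec(\CC_t) = \{\lambda_k : k \neq i,j\} \cup \{\mu_+(t), \mu_-(t)\}$ where $\mu_\pm(t) = \tfrac{\lambda_i+\lambda_j}{2} \pm g(t)$ and $g(t)$ is a continuous branch of $\sqrt{(\tfrac{\lambda_i-\lambda_j}{2})^2 + f(t)^2}$. The $n-2$ constant eigenpaths $\gamma_k(t) = \lambda_k$ are automatic, so the problem reduces to choosing $f$ so that $g$ flips sign from $t=0$ to $t=1$.

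The constraints on $f$ are: (i) $|f(t)| < |\lambda_i - \lambda_j|$, equivalent to the stated norm bound since $\|\CC_t - D\| = |f(t)|$; (ii) $f(t) \neq \pm i(\lambda_i - \lambda_j)/2$, so the discriminant under $g$ never vanishes and $\mu_+(t) \neq \mu_-(t)$; (iii) $f(t) \neq \pm\sqrt{(\lambda_k - \lambda_i)(\lambda_k - \lambda_j)}$ for every $k \neq i,j$, which a direct computation shows is exactly the condition $\mu_\pm(t) \neq \lambda_k$; and (iv) viewed as a loop in $\C$ based at $0$, $f$ has odd total winding around the branch set $\{\pm i(\lambda_i - \lambda_j)/2\}$, which is precisely what forces $g$ to change sign and hence swaps the endpoints of $\mu_+$ and $\mu_-$. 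The forbidden values in (iii) are all nonzero because the $\lambda$'s are distinct, and there are only finitely many of them overall.

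To produce such an $f$, I would work inside the open disk $\Delta = \{z \in \C : |z| < |\lambda_i - \lambda_j|\}$ with the finite forbidden set from (ii)--(iii) removed. The crucial geometric fact is that $+i(\lambda_i - \lambda_j)/2$ lies strictly inside $\Delta$ at half its radius, so a small circle around it sits well inside $\Delta$ and can be shrunk to miss every other forbidden point. I would then take $f$ to traverse a thin arc from $0$ toward $+i(\lambda_i - \lambda_j)/2$, circle that point once inside the small disk (giving winding $+1$ around $+i(\lambda_i - \lambda_j)/2$ and $0$ around $-i(\lambda_i - \lambda_j)/2$), and retrace the arc back to $0$, bending it as needed to avoid the finitely many other forbidden points. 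This is possible because $\Delta$ minus finitely many points is path-connected with free fundamental group on the punctures. The main obstacle is meeting (i)--(iv) simultaneously, but the fact that the required encirclement can be done deep inside $\Delta$ leaves ample room to achieve the winding while avoiding the other forbidden values and the boundary of $\Delta$, after which the verification that $\CC$ has the claimed properties is immediate.
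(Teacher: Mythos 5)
Your approach is correct and genuinely different from the paper's. The paper splits the path into two halves: on $[0,\tfrac12]$ it conjugates $D$ by a family of rotation matrices acting on the $i,j$-plane (a similarity path, so the spectrum is frozen while the matrix drifts to $D$ with $\lambda_i,\lambda_j$ swapped in position), and on $[\tfrac12,1]$ it uses a path of diagonal matrices whose $i$th and $j$th entries trace disjoint arcs inside the disk of radius $\tfrac12|\lambda_i-\lambda_j|$ centered at $\tfrac{\lambda_i+\lambda_j}{2}$, avoiding the other $\lambda_k$. Concatenating gives the swap directly. You instead keep $D$ fixed and add a single symmetric rank-two perturbation $f(t)(E_{ij}+E_{ji})$ with $f$ a loop based at $0$, reducing everything to the monodromy of $g(t)=\sqrt{(\tfrac{\lambda_i-\lambda_j}{2})^2+f(t)^2}$: an odd total winding of $f$ about the branch points $\pm i(\lambda_i-\lambda_j)/2$ flips the sign of $g$, hence swaps $\mu_\pm$. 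Your computation that $\mu_\pm(t)=\lambda_k$ exactly when $f(t)^2=(\lambda_k-\lambda_i)(\lambda_k-\lambda_j)$ is right, these forbidden values are nonzero and finite, and the branch points sit at radius $\tfrac12|\lambda_i-\lambda_j|$, strictly inside $\Delta$, so there is room to execute the encirclement while avoiding the finitely many punctures (this is the standard fact that a disk minus a finite set has free fundamental group). Also $\|\CC_t-D\|=|f(t)|$ in the entrywise-max norm is correct. The tradeoff: the paper's construction is completely explicit and elementary but requires stitching two different mechanisms together; yours is a single uniform perturbation and arguably cleaner conceptually, at the cost of invoking a covering-space/winding-number argument. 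One small bonus of your version: you get the \emph{strict} inequality $\|\CC_t-D\|<|\lambda_i-\lambda_j|$ for free since $f$ stays in the open disk, whereas the paper's rotation phase actually attains equality at $\alpha=\tfrac12$ and would need a minor adjustment to be strictly below the bound as stated.
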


\begin{proof}
By employing a change of basis, we may assume without loss of generality that $i=1$ and $j=2$. Now for $\alpha \in [0, \frac{1}{2}]$ let $\SSS_\alpha$ be the direct sum of the rotation matrix
$$\begin{bmatrix} \cos \pi \alpha & -\sin \pi \alpha \\ \sin \pi \alpha & \cos \pi \alpha \end{bmatrix}$$
with the identity matrix of size $(n-2)$-by-$(n-2)$. Then for $\alpha \in [0, \frac{1}{2}]$ let $\CC_\alpha = \SSS_\alpha D \SSS_\alpha^{-1}$, so $\CC_0 = D$ and $\CC_\frac{1}{2}$ is $D$ but with $\lambda_1$ and $\lambda_2$ swapped. Since the spectrum of $\CC_\alpha$ remains constant on this path, it follows that there are no ambiguities for $\alpha \in [0, \frac{1}{2}]$. Further, the unambiguous $\CC$-eigenpairing of $\CC_0$ and $\CC_\frac{1}{2}$ is the identity map $\lambda_k \mapsto \lambda_k$.

We note that $\CC_\alpha$ is the direct sum of
$$\frac{1}{2}\begin{bmatrix} \lambda_1 + \lambda_2 + (\lambda_1-\lambda_2)\cos 2\pi \alpha & (\lambda_1 - \lambda_2)\sin 2\pi \alpha \\ (\lambda_1 - \lambda_2)\sin 2\pi \alpha & \lambda_1 + \lambda_2 + (\lambda_2-\lambda_1)\cos 2\pi \alpha \end{bmatrix}$$
with the blocks $[\lambda_3], ..., [\lambda_n]$. Thus, we see that no entry of $\CC_\alpha$ differs by more than $|\lambda_1-\lambda_2|$ from the corresponding entry in $D$ on $[0, \frac{1}{2}]$.

Now since there are finitely many other eigenvalues $\lambda_3, ..., \lambda_n$, we may find paths $[\frac{1}{2}, 1] \to \C$ from $\lambda_1$ to $\lambda_2$ and $\lambda_2$ to $\lambda_1$ that disagree everywhere on $[\frac{1}{2}, 1]$ and never hit any $\lambda_3, ..., \lambda_n$. Further, we may assume that the images of each path is contained within the disk of radius $\frac{|\lambda_1-\lambda_2|}{2}$ centered on $\frac{\lambda_1+\lambda_2}{2}$. Define $\CC_\alpha$ on $[\frac{1}{2}, 1]$ as a diagonal matrix whose first two entries are precisely these paths, so then $\CC_1 = D$. Then the desired swap of $\lambda_1$ and $\lambda_2$ is the unambiguous $\CC$-eigenpairing of $D$ and itself, and we have $\|\CC_\alpha - D\| < |\lambda_1 - \lambda_2|$ for all $\alpha \in [0,1]$.
\end{proof}

\begin{Lemma}\label{PolynomialRippingLem}
Let $\varepsilon > 0$ and let $\PP$ be a polynomial matrix path such that all ambiguities in $E_\PP$ are singular. Further, let $\{\gamma_1, ..., \gamma_n\}$ be a $\PP$-eigenpath set. Then there is a matrix path $\CC$ admitting a unique $\CC$-eigenpath set $\{\gamma_1', ..., \gamma_n'\}$ such that $\|\PP_\alpha - \CC_\alpha\| < \varepsilon$ and $|\gamma_j(\alpha) - \gamma_j'(\alpha)| < \varepsilon$ for all $\alpha \in [0,1]$ and $j=1,...,n$.
\end{Lemma}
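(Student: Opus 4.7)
The plan is to modify $\PP$ only in small disjoint neighborhoods of its singular ambiguities, leaving it unchanged elsewhere. Since $\PP$ is polynomial with only singular ambiguities, Corollary \ref{SingularAmbiguityBound} ensures that there are finitely many to repair. First I apply Lemma \ref{FiniteCoverLem} to obtain a finite open cover $\OO_1,\dots,\OO_r$ of these ambiguities, each of diameter less than $\varepsilon/2$ and centered at some $(\lambda_k,\alpha_k)$ that every $\PP$-eigenpath passing through $\overline{\OO_k}$ must hit. After shrinking the $\OO_k$ I may assume the $\alpha_k$ are distinct and associate to each a closed interval $I_k=[\alpha_k-\eta_k,\alpha_k+\eta_k]$ on which $\PP$ has no other ambiguity, the $I_k$ are pairwise disjoint, and $\PP$ varies by less than $\varepsilon/2$ in norm.

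Outside $\bigcup_k I_k$ I set $\CC_\alpha=\PP_\alpha$, so no ambiguities arise there; all the work is in the local construction on each $I_k$. Given the $m_k$ eigenpaths $\gamma_{i_1},\dots,\gamma_{i_{m_k}}$ that pass through $(\lambda_k,\alpha_k)$, together with the pairing $\gamma_{i_j}(\alpha_k-\eta_k)\mapsto\gamma_{i_j}(\alpha_k+\eta_k)$ they induce, I will build a continuous path from $\PP_{\alpha_k-\eta_k}$ to $\PP_{\alpha_k+\eta_k}$ that has distinct eigenvalues throughout, realizes the same pairing, and stays within $\varepsilon$ of $\PP$ in norm with eigenvalues within $\varepsilon$ of the $\gamma_j$. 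Since $\PP_{\alpha_k\pm\eta_k}$ have $n$ distinct eigenvalues they are diagonalizable, so in narrow collars at each end of $I_k$ I conjugate continuously into a basis adapted to the correspondence, reaching in the interior of $I_k$ an approximately diagonal matrix whose $m_k$ relevant diagonal entries are the eigenvalues matched by the pairing. Between the collars I then traverse a diagonal path whose relevant entries are non-crossing continuous curves from $\gamma_{i_j}(\alpha_k-\eta_k)$ to $\gamma_{i_j}(\alpha_k+\eta_k)$ lying inside $\OO_k$, detoured slightly into the complex plane near $\alpha_k$ in the spirit of Lemma \ref{DiagonalPairingLem} so that they never coincide, while the remaining $n-m_k$ entries follow the already-distinct eigenpaths.

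The main technical obstacle is reconciling the diagonalizing conjugations at $\alpha_k-\eta_k$ and $\alpha_k+\eta_k$, which are a priori unrelated. My remedy is to work in a single fixed basis: a first $\varepsilon$-small perturbation of $\PP$ may be used to ensure that the near-$\lambda_k$ block of $\PP_{\alpha_k}$ is diagonalizable with approximately distinct entries, and then by continuity of $\PP$ and the smallness of $\eta_k$, both $\PP_{\alpha_k\pm\eta_k}$ are $O(\varepsilon)$-close to a common block-diagonal form, so the endpoint bases differ by something within $\varepsilon$ of the identity and can be linearly interpolated inside the collars without disturbing the middle segment. Concatenating the unchanged $\PP$ outside and the constructed paths inside the $I_k$ via Lemma \ref{Concatenation} yields the desired $\CC$: it has no ambiguities by construction, so Proposition \ref{AmbiguityProp} provides a unique $\CC$-eigenpath set $\{\gamma'_1,\dots,\gamma'_n\}$. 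Outside $\bigcup_k I_k$ these coincide with the $\gamma_j$, and inside each $I_k$ the diameter bound $\operatorname{diam}(\OO_k)<\varepsilon/2$ together with the pairing-preserving design forces $|\gamma'_j(\alpha)-\gamma_j(\alpha)|<\varepsilon$ throughout.
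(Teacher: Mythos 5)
Your overall strategy --- local surgery in disjoint neighborhoods of the finitely many singular ambiguities, leaving $\PP$ unchanged elsewhere, with non-crossing diagonal detours in the interior --- matches the paper's. The gap is in the surgery itself, specifically in reconciling the diagonalizing bases at the two endpoints $\alpha_k\pm\eta_k$. You claim that after a small preliminary perturbation the endpoint bases ``differ by something within $\varepsilon$ of the identity,'' but this is precisely what fails near a singular ambiguity (exceptional point): eigenvector assignments are not uniformly continuous there. Even when $\PP_{\alpha_k-\eta_k}$ and $\PP_{\alpha_k+\eta_k}$ are close in norm, the eigenvector attached to a given eigenvalue can rotate by $O(1)$, because eigenvector perturbation is controlled by the reciprocal of the spectral gap, which is itself $O(\varepsilon)$ after your first perturbation. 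Linearly interpolating two such bases ``inside the collars'' is therefore not justified and will generically fail. The preliminary perturbation separating the eigenvalues of $\PP_{\alpha_k}$ is also doing unacknowledged work: making $\PP$ ambiguity-free near $\alpha_k$ while keeping the surrounding eigenpath and pairing data intact is essentially the statement being proved.

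The paper avoids this by diagonalizing only once. It sets $\CC_{\alpha_k}=D$ (a diagonal form of $\PP_{\alpha_k^-}$) and on the second half $(\alpha_k,\alpha_k^+)$ makes no attempt at a second diagonalization; it instead invokes Lemma \ref{ConvexApproxLem} to produce an ambiguity-free $\tfrac{\varepsilon}{2}$-approximation of the straight convex segment from $D$ to $\PP_{\alpha_k^+}$. This yields \emph{some} unambiguous pairing $p$, which need not be the desired one; the discrepancy is a permutation $\tau$ of mutually $\tfrac{\varepsilon}{2}$-close eigenvalues, which is then realized on the first half $(\alpha_k^-,\alpha_k)$ by loops from $D$ to itself built with Lemma \ref{DiagonalPairingLem}. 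In other words, the pairing is corrected algebraically rather than by matching eigenvectors, sidestepping the eigenbasis-reconciliation problem entirely. To repair your proposal, replace the interpolation-of-bases step with this two-phase (permutation loop, then convex approximation) construction.
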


\begin{proof}
Since the supremum norm $\|\cdot\|$ is equivalent to the basis-invariant Frobenius norm $\|\cdot\|_2$, it suffices to prove this lemma for the Frobenius norm.

Enumerate the points $\alpha_1 < ... < \alpha_r$ at which singular ambiguities occur in $E_\PP$, and find a collection of disjoint open intervals $U_1', ..., U_r' \subseteq [0,1]$ such that $\alpha_k \in U_k'$ for $k=1,...,r$. For each $k$, there is an open subinterval $U_k \subseteq U_k'$ that still contains $\alpha_k$ and such that $\|\PP_\alpha - \PP_{\alpha_k}\|_2 < \frac{\varepsilon}{2}$ and $|\gamma_j(\alpha) - \gamma_j(\alpha_k)| < \frac{\varepsilon}{2}$ for all $\alpha \in U_k$ and $j=1,...,r$. Since we may re-parameterize $\PP_\alpha$ by extending the domain $[0,1]$ to a slightly larger real interval without significantly changing the entries at any given point, we may assume without loss of generality that $\alpha_1 > 0$ and $\alpha_r < 1$. With this assumption, we may further assume that each $U_k = (\alpha_k^-, \alpha_k^+)$ for some $\alpha_k^-, \alpha_k^+$ satisfying $0 < \alpha_k^- < \alpha_k < \alpha_k^+ < 1$.

We now define $\CC$ as follows: Set $\CC_\alpha = \PP_\alpha$ outside $\bigcup_k U_k$. For $\alpha \in U_k$, beginning with $k=1$, we use the following procedure to define $\CC_\alpha$. Since $\PP_{\alpha_1^-}$ has $n$ distinct eigenvalues, we may choose a basis of $\C^n$ such that $\PP_{\alpha_1^-} = D$ is diagonal. That is, we have
\begin{align*}
    D = \begin{bmatrix}
    \gamma_1(\alpha_1^-) & & \\
     & \ddots & \\
     & & \gamma_n(\alpha_1^-)
     \end{bmatrix}.
\end{align*}
Now set $\CC_{\alpha_1} = D$, and use Lemma \ref{ConvexApproxLem} to define $\CC_\alpha$ on $(\alpha_1, \alpha_1^+)$ as $\frac{\varepsilon}{2}$-close to the convex path $C$ from $D$ to $\PP_{\alpha_1^+}$, but without any ambiguities. Then there is some unique $\CC$-eigenpairing $p$ from $\CC_{\alpha_1}$ to $\CC_{\alpha_1^+}$, so there must be a permutation $\tau$ of the eigenvalues of $D$ such that $p \circ \tau$ is the desired $\CC$-eigenpairing $\gamma_j(\alpha_1^-) \mapsto \gamma_j(\alpha_1^+)$ of $\CC_{\alpha_1^-}$ and $\CC_{\alpha_1^+}$. Then $\tau$ is a product of transpositions involving $\frac{\varepsilon}{2}$-close eigenvalues, so we may apply Lemma \ref{DiagonalPairingLem} once for each transposition to construct a path from $D$ to itself. In particular, this path achieves the desired unambiguous eigenpairing $\tau$ and never deviates by more than $\frac{\varepsilon}{2}$ from $D$.

For $\alpha \in (\alpha_1^-, \alpha_1)$, we have
\begin{align*}
    \|\CC_\alpha - \PP_\alpha\|_2 &= \|\CC_\alpha - D + D- \PP_\alpha\|_2 \\
    &\leq \|\CC_\alpha - D\|_2 + \|D- \PP_\alpha\|_2 \\
    &< \frac{\varepsilon}{2} + \frac{\varepsilon}{2} \\
    &= \varepsilon.
\end{align*}
and for $\alpha \in (\alpha_1, \alpha_1^+)$ we have
\begin{align*}
    \|\CC_\alpha - \PP_\alpha\|_2 &= \|\CC_\alpha - C_\alpha + C_\alpha- \PP_\alpha\|_2 \\
    &\leq \|\CC_\alpha - C_\alpha\|_2 + \|C_\alpha- \PP_\alpha\|_2 \\
    &< \frac{\varepsilon}{2} + \frac{\varepsilon}{2} \\
    &= \varepsilon.
\end{align*}
We may then repeat this process for $k=2,...,r$, finishing our proof.
\end{proof}

We now give our proof of Theorem \ref{RippingThm}.

\begin{proof}
Let $\OO _1, ..., \OO _r$ be the open cover of the singular ambiguities determined by Lemma \ref{FiniteCoverLem}, each with diameter less than $\varepsilon$ and center $(\lambda_k, \alpha_k)$. We then obtain $\varepsilon_*  = \frac{1}{2}\min\{\varepsilon_0, \varepsilon_0', \frac{\varepsilon}{2}\}$ similarly as in Theorem \ref{ClosePathsThm}, so by Lemma \ref{EigenvalueDeltaLem} there is a $\delta>0$ such that a $\delta$-perturbation of the matrix path $\CC$ induces at most an $\varepsilon_*$-perturbation of the eigenvalues at any point. Assume without loss of generality that $\delta < \frac{\varepsilon}{2}$. Then by Lemma \ref{WeierstrassLem}, we may find a polynomial matrix path $\PP$ that agrees with $\CC$ on $\alpha_1, ..., \alpha_r$, admits only singular ambiguities, and $\delta$-approximates $\CC$.

Now consider the matrix path homotopy given by $H_{\alpha, \beta} = (1-\beta) \CC_\alpha + \beta \PP_\alpha$. As we shift $\beta$ from 0 to 1, the $\CC$-eigenregion continuously deforms into the $\PP$-eigenregion, each point shifting by no more than $\varepsilon_*$.  By our construction of $\varepsilon_*$, the shifts of the $\OO _1, ..., \OO _r$ are well-defined in the following sense: for any fixed $\OO _k$ and $\alpha_0 \in [0,1]$, let $\Lambda_{k, \alpha_0} = \{\lambda \in \C \mid (\lambda, \alpha_0) \in \OO _k\}$ be a multiset wherein each point $\lambda$ occurs $\mult(\lambda, \alpha_0)$ times. Then for any continuous parameterizations of the points $\lambda \in \Lambda$ under the shift by the homotopy, the total shifted multiset $\Lambda$ is the same as it would be for any other parameterization. Similarly, the shifts of the connected components $F_1, ..., F_s$ of $E_\CC \backslash \{\bigcup_{k=1}^r \OO _k\}$ are also well-defined in the same sense.

Note that each $\gamma_j$ is determined (up to variation by the maximum diameter $\frac{\varepsilon}{4}$ of the $\OO _k$) entirely by which of the $\OO _k$ (for $k=1,...,r$) and $F_\ell$ (for $\ell=1,...,s$) it passes through. Since the shifts of these objects under the homotopy $H_{\alpha, \beta}$ are well-defined, we may find a $\PP$-eigenpath set $\{\eta_1, ..., \eta_n\}$ such that each $\eta_j$ passes through the same (shifted versions of) $\OO _1, ..., \OO _r, F_1, ..., F_s$ as does $\gamma_j$. Then $|\eta_j(\alpha) - \gamma_j(\alpha)| < \varepsilon_* \leq \frac{\varepsilon}{4}$ when $\alpha$ is such that these paths are passing through a component of the form $F_\ell$. Since the diameter of each $\OO _k$ is at most $\frac{\varepsilon}{4}$, we have $|\eta_j(\alpha) - \gamma_j(\alpha)| < \varepsilon_* + \frac{\varepsilon}{4} \leq \frac{\varepsilon}{2}$ when $\alpha$ is such that these paths are passing through a component of the form $\OO _k$. Thus $|\eta_j(\alpha) - \gamma_j(\alpha)| < \frac{\varepsilon}{2}$ and $\|\CC_\alpha - \PP_\alpha\| < \frac{\varepsilon}{2}$ for all $\alpha \in [0,1]$.

We may now apply Lemma \ref{PolynomialRippingLem} to find an $\frac{\varepsilon}{2}$-approximation $\CC'$ to $\PP$ that admits no ambiguities and whose unique $\CC'$-eigenpath set $\{\gamma_1',...,\gamma_n'\}$ satisfies $|\eta_j(\alpha) - \gamma_j'(\alpha)| < \frac{\varepsilon}{2}$. Then $\CC'$ is an $\varepsilon$-approximation of $\CC$, and each $\gamma_j'$ is an $\varepsilon$-approximation of $\gamma_j$, so we are finished.
\end{proof}

\begin{Cor}
Along with the hypotheses of Theorem \ref{RippingThm}, suppose that $\CC_0$ and $\CC_1$ have $n$ distinct eigenvalues. Then the ambiguity-free approximation $\CC'$ may be chosen so that $\CC'_0 = \CC_0$ and $\CC'_1 = \CC_1$. If only one (say $\CC_0$) has $n$ distinct eigenvalues, then the approximation may still be chosen so that $\CC_0' = \CC_0$.
\end{Cor}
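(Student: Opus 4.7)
The plan is to rerun the proof of Theorem \ref{RippingThm} while arranging that every intermediate approximation already agrees with $\CC$ at the prescribed endpoint(s). The key observation is that if $\CC_0$ has $n$ distinct eigenvalues, then $\alpha = 0$ cannot be an ambiguity of $\CC$, so the singular ambiguities $\alpha_1, \dots, \alpha_r$ used in Lemma \ref{FiniteCoverLem} are bounded away from $0$; the analogous remark applies at $\alpha = 1$.

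First I would invoke Lemma \ref{WeierstrassLem} with the enlarged list of interpolation points, appending $0$ (and also $1$ in the two-sided case) to $\alpha_1,\dots,\alpha_r$. The lemma permits specifying finitely many points where $\PP$ must equal $\CC$, so we obtain a polynomial matrix path $\PP$ with $\PP_0 = \CC_0$ (and $\PP_1 = \CC_1$ when applicable), still $\delta$-close to $\CC$ and still admitting only singular ambiguities. Because $\PP_0 = \CC_0$ has $n$ distinct eigenvalues, $\PP$ has no ambiguity at $0$; similarly in the two-sided case $\PP$ has no ambiguity at $1$. Hence the set of singular ambiguities of $\PP$ lies in $(0,1)$ (respectively $(0,1]$).

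Next I would run the ripping construction of Lemma \ref{PolynomialRippingLem}, but choose the open intervals $U_k \subseteq [0,1]$ centered at the ambiguity points so as to avoid the endpoint(s) we wish to preserve. In the two-sided case, all $\alpha_k \in (0,1)$, so we may take each $U_k \subset (0,1)$ directly, and the re-parameterization trick inside the proof of Lemma \ref{PolynomialRippingLem} is not needed. In the one-sided case, if some $\alpha_r = 1$, we apply that re-parameterization only at the right endpoint (extending the domain slightly past $1$), while leaving a neighborhood of $0$ untouched so $U_1 \subset (0, \alpha_1^+)$. Because the output $\CC'$ of Lemma \ref{PolynomialRippingLem} agrees with $\PP$ outside $\bigcup_k U_k$, we conclude $\CC'_0 = \PP_0 = \CC_0$, and likewise $\CC'_1 = \CC_1$ in the two-sided case.

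Finally I would check that the homotopy-matching step from the body of the Theorem \ref{RippingThm} proof still gives the eigenpath approximation. Since $H_{0,\beta} = (1-\beta)\CC_0 + \beta\PP_0 = \CC_0$ for every $\beta$, the eigenvalues of $H_{\alpha,\beta}$ at $\alpha = 0$ do not move during the deformation, so the chosen $\PP$-eigenpaths $\eta_j$ satisfy $\eta_j(0) = \gamma_j(0)$; combined with $\CC_0' = \PP_0$ this yields $\gamma_j'(0) = \gamma_j(0)$, and symmetrically at $\alpha = 1$ when that endpoint is preserved. The main (mild) obstacle is the bookkeeping for the one-sided case: one must verify that the asymmetric re-parameterization of Lemma \ref{PolynomialRippingLem} respects the $\frac{\varepsilon}{2}$-bounds on each subinterval, but this is immediate from the proof since those bounds were derived locally on each $U_k$.
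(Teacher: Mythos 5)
Your argument follows the same route as the paper's: both invoke Lemma~\ref{WeierstrassLem} with $\alpha=0$ (and $\alpha=1$) added to the interpolation list, and both observe that Lemma~\ref{PolynomialRippingLem} only modifies $\PP$ inside the intervals $U_k$, which can be taken to avoid the preserved endpoint(s) since those are not singular ambiguities. The paper's proof is terser; your extra care about the one-sided re-parameterization and the homotopy-matching step $\eta_j(0)=\gamma_j(0)$ fills in details the paper leaves implicit, and both checks come out correctly.
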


\begin{proof}
Use Lemma \ref{WeierstrassLem} to approximate $\CC$ with a polynomial path $\PP$ that also agrees with $\CC$ at $\alpha=0,1$. Then by our construction of $\CC'$ in Lemma \ref{PolynomialRippingLem}, we see that $\CC'$ agrees with $\PP$ (and so also with $\CC$) at $\alpha=0,1$. Note that this procedure still works if we want only $\CC'_0 = \CC_0$ or $\CC'_1 = \CC_1$.
\end{proof}


\section{Proof of Theorem 3}

In this section we shift our focus from eigenpaths to eigenpairings. As will be seen in section \ref{Section2x2}, low-dimensional convex eigenpairings are relatively easy to determine. Here we prove Theorem 3, which allows us to predict eigenpairings for a slightly more general class of matrix paths.

We begin by proving the simple case wherein $f$ and $g$ are everywhere nonnegative, and then use a technical lemma along with Theorem \ref{ClosePathsThm} to reduce the problem to the aforementioned simple case. We will again denote the convex eigenpath by $C_\alpha = (1-\alpha)A + \alpha B$.

\begin{Lemma}\label{ConeLem}
Let $\CC_\alpha = f(\alpha)A+g(\alpha)B$ be a path from $A$ to $B$ for continuous $f,g : [0,1] \to \R$. If $f(\alpha), g(\alpha) \geq 0$ for all $\alpha \in [0,1]$, then any  convex eigenpairing $p$ is also a $\CC$-eigenpairing.
\end{Lemma}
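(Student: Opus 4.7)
The plan is to reduce the problem to the pure convex case by extracting a positive scalar from $\CC_\alpha$ and reparametrizing. Set $h(\alpha) = f(\alpha) + g(\alpha)$, which is continuous on $[0,1]$ with $h(0) = h(1) = 1$. Since $f,g \geq 0$, we have $h \geq 0$. I split into two cases depending on whether $h$ vanishes anywhere on $[0,1]$.

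The degenerate case $h(\alpha_0) = 0$ for some $\alpha_0 \in [0,1]$ forces $f(\alpha_0) = g(\alpha_0) = 0$, so $\CC_{\alpha_0} = 0 = 0 \cdot I$. Corollary \ref{AllBijectionsCor} then immediately tells us that every bijection $\spec(A) \to \spec(B)$ is a $\CC$-eigenpairing, and we are done (including the trivial subcase $n = 1$).

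In the main case $h > 0$ on $[0,1]$, define the continuous reparametrization $\hat{\beta} : [0,1] \to [0,1]$ by $\hat{\beta}(\alpha) = g(\alpha)/h(\alpha)$, and note that $\hat{\beta}(0) = 0$, $\hat{\beta}(1) = 1$. Then
\begin{align*}
    \CC_\alpha = h(\alpha)\left( \frac{f(\alpha)}{h(\alpha)}A + \frac{g(\alpha)}{h(\alpha)}B \right) = h(\alpha)\, C_{\hat{\beta}(\alpha)}.
\end{align*}
Given any convex eigenpairing $p$, pick a corresponding convex eigenpath set $\{\gamma_1, \ldots, \gamma_n\}$ with $p(\gamma_j(0)) = \gamma_j(1)$. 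I would then define $\tilde{\gamma}_j(\alpha) = h(\alpha)\,\gamma_j(\hat{\beta}(\alpha))$, which is continuous as a composition of continuous functions. By Lemma \ref{ScaleShiftInvLem} (with $a = h(\alpha)$, $b = 0$), the multiset $\spec(\CC_\alpha)$ equals $h(\alpha) \cdot \spec(C_{\hat{\beta}(\alpha)}) = \{\tilde{\gamma}_1(\alpha), \ldots, \tilde{\gamma}_n(\alpha)\}$, so $\{\tilde{\gamma}_1, \ldots, \tilde{\gamma}_n\}$ is a $\CC$-eigenpath set. Finally, since $h(0) = h(1) = 1$, $\hat{\beta}(0) = 0$, and $\hat{\beta}(1) = 1$, we get $\tilde{\gamma}_j(0) = \gamma_j(0)$ and $\tilde{\gamma}_j(1) = \gamma_j(1)$, so the induced $\CC$-eigenpairing is exactly $p$.

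There is no serious obstacle here; the only subtlety is the potential vanishing of $h$, which is precisely why the Lemma \ref{PosScalLem} style reparametrization does not work in a single stroke. However, Corollary \ref{AllBijectionsCor} makes this degeneracy harmless, since $\CC_\alpha$ becoming scalar at even a single point trivializes the eigenpairing question. Everything else is a direct algebraic manipulation leveraging the invariants from Section \ref{SectionInvariants}.
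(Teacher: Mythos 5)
Your proof is correct and follows essentially the same route as the paper's: factor $\CC_\alpha = (f(\alpha)+g(\alpha))\,C_{\beta(\alpha)}$ with $\beta = g/(f+g)$, rescale a convex eigenpath set by $h(\alpha)=f(\alpha)+g(\alpha)$, and note that endpoints are preserved. One point in your favor: the paper simply asserts that $c_\alpha$ and $\beta(\alpha)$ ``may be viewed as continuous,'' which is delicate when $f$ and $g$ vanish simultaneously (the ratio $g/(f+g)$ need not extend continuously there); your explicit case split via Corollary~\ref{AllBijectionsCor} when $h(\alpha_0)=0$ cleanly disposes of exactly this degeneracy and makes the argument airtight.
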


\begin{proof}
We know that each $\CC_\alpha$ is equal to $c_\alpha C_{\beta(\alpha)}$ for some $c_\alpha \geq 0$ and convex combination $C_{\beta(\alpha)}$. Further, we may view $c_\alpha$ and $\beta(\alpha)$ as continuous functions of $\alpha$. We then obtain
\begin{align*}
    E_\CC &= \{(\lambda, \alpha) \mid \alpha \in [0,1], \; \lambda \in \sigma(\CC_\alpha)\} \\
    &= \{(c_\alpha \lambda, \alpha) \mid \alpha \in [0,1], \; \lambda \in \sigma(C_{\beta(\alpha)})\}.
\end{align*}
If $\{\gamma_1, ..., \gamma_n\}$ is a convex eigenpath set that induces the convex eigenpairing $p$, then the continuous functions $\gamma_j'$ given by $\gamma_j'(\alpha) = c_\alpha \gamma_j(\beta(\alpha))$ form a $\CC$-eigenpath set. Note that $\gamma_j'(0) = \gamma_j(0)$ and $\gamma_j'(1) = \gamma_j(1)$, so $p$ is a $\CC$-eigenpairing as well.
\end{proof}

\begin{Lemma}\label{ArcLem}
Let $\CC_\alpha = f(\alpha)A+g(\alpha)B$ be a path from $g(0)B$ to $g(1)B$ for continuous $f,g : [0,1] \to \R$. If $f(\alpha), g(\alpha) \geq 0$ for all $\alpha \in [0,1]$, then the bijection $\spec(g(0)B) \to \spec(g(1)B)$ given by $\lambda \mapsto \frac{g(1)}{g(0)} \lambda$ is a $\CC$-eigenpairing.
\end{Lemma}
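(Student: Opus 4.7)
My plan is to prove this directly by reparameterizing a convex eigenpath set, adapting the idea in the proof of Lemma~\ref{ConeLem}. The new wrinkle is that $\CC_\alpha$ starts and ends at the \emph{same} matrix up to scaling (namely $B$ scaled by $g(0)$ and by $g(1)$), so the reparameterization $\beta(\alpha)$ must leave $1$ at $\alpha = 0$, travel through $[0,1]$, and return to $1$ at $\alpha = 1$, rather than being monotone as in Lemma~\ref{PosScalLem}.

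First I would assume without loss of generality that $g(0) > 0$, since otherwise the stated bijection $\lambda \mapsto (g(1)/g(0))\lambda$ is undefined. Fix a convex eigenpath set $\{\gamma_1, \ldots, \gamma_n\}$ for the path $C$ from $A$ to $B$, and set $c(\alpha) = f(\alpha) + g(\alpha) \geq 0$. On the (open) set where $c(\alpha) > 0$, define
\[
\beta(\alpha) = \frac{g(\alpha)}{c(\alpha)} \in [0,1],
\]
so that $\CC_\alpha = c(\alpha)\, C_{\beta(\alpha)}$ there. Since $f(0) = f(1) = 0$, we have $c(0) = g(0)$, $c(1) = g(1)$, and $\beta(0) = \beta(1) = 1$. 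I then define candidate paths
\[
\gamma'_j(\alpha) = \begin{cases} c(\alpha)\,\gamma_j(\beta(\alpha)) & \text{if } c(\alpha) > 0, \\ 0 & \text{if } c(\alpha) = 0. \end{cases}
\]

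Next come the two key verifications. Continuity of each $\gamma'_j$ on $\{c > 0\}$ is immediate from continuity of $c$, $\beta$, and $\gamma_j$. At any $\alpha_0$ with $c(\alpha_0) = 0$, continuity follows from the bound $|\gamma'_j(\alpha)| \leq c(\alpha)\,\max_{t \in [0,1]}|\gamma_j(t)|$, which forces $\gamma'_j(\alpha) \to 0 = \gamma'_j(\alpha_0)$ independently of any pathological behavior of $\beta$. For the multiset equality $\spec(\CC_\alpha) = \{\gamma'_1(\alpha), \ldots, \gamma'_n(\alpha)\}$, at $\alpha$ with $c(\alpha) > 0$ this is the scaling identity $\spec(\CC_\alpha) = c(\alpha)\spec(C_{\beta(\alpha)})$, while at $\alpha$ with $c(\alpha) = 0$ both sides equal the multiset $\{0, \ldots, 0\}$ since $\CC_\alpha = 0$.

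Finally, evaluating at the endpoints gives $\gamma'_j(0) = g(0)\gamma_j(1)$ and $\gamma'_j(1) = g(1)\gamma_j(1)$. As $j$ ranges over $1, \ldots, n$ the values $\gamma_j(1)$ exhaust $\spec(B)$, so the induced pairing $g(0)\gamma_j(1) \mapsto g(1)\gamma_j(1)$ is precisely $\lambda \mapsto (g(1)/g(0))\lambda$, as desired. The main obstacle I anticipate is the possibility of interior zeros of $c$, where $\beta$ is undefined and can be wildly discontinuous; this is absorbed by the observation that $\CC_\alpha = 0$ at any such zero, so the multiplicative factor $c(\alpha)$ flattens out any misbehavior of $\beta$ and simultaneously makes the multiset equality trivial.
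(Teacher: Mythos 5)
Your proof is correct, and it takes a genuinely different route from the paper's. You extend the reparameterization idea of Lemma~\ref{ConeLem} directly: write $\CC_\alpha = c(\alpha)\,C_{\beta(\alpha)}$ with $c = f+g$ and $\beta = g/c$, pull back a convex eigenpath set, and scale. The two technical points you need—that $\beta(0)=\beta(1)=1$ forces the endpoints to collapse onto $\spec(B)$ scaled by $g(0)$ and $g(1)$, and that interior zeros of $c$ are harmless because $c(\alpha)=0$ forces $\CC_\alpha=0$ while the factor $c(\alpha)$ squeezes $\gamma_j'$ to $0$ continuously—are both handled cleanly, and the latter is actually argued more carefully than in the paper's own proof of Lemma~\ref{ConeLem}. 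The paper instead treats the simultaneous-vanishing case separately via Corollary~\ref{AllBijectionsCor}, then picks a distinguished $\alpha_0$ (a zero of $g$, or the maximizer of $f/g$), observes that every $\CC_\alpha$ is a nonnegative combination of $B$ and $\CC_{\alpha_0}$, applies Lemmas~\ref{PosScalLem} and~\ref{ConeLem} on each of the two subintervals $[0,\alpha_0]$ and $[\alpha_0,1]$, and concatenates via Lemma~\ref{Concatenation}. Your version is more self-contained and constructs the $\CC$-eigenpath set explicitly, at the cost of redoing the reparameterization computation rather than invoking the existing lemmas as black boxes; the paper's version is modular but relies on several of the invariance lemmas and has to handle the vanishing case by a separate appeal. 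Either is a complete proof.
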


\begin{proof}
If $f$ and $g$ simultaneously vanish at some point in $[0,1]$, then we are finished by Lemma \ref{AllBijectionsCor}. Otherwise choose $\alpha_0 \in [0,1]$ such that $g(\alpha_0) = 0$, or if no such zero exists choose $\alpha_0$ that maximizes $\frac{f(\alpha_0)}{g(\alpha_0)}$.

Now let $p$ be a convex eigenpairing of $B$ and $C_{\alpha_0}$. Notice that for every $\alpha \in [0, 1]$, we know that $\CC_\alpha$ is a positive $\R$-linear combination of $B$ and $C_{\alpha_0}$. Then by Lemmas \ref{PosScalLem} and \ref{ConeLem}, we find that a suitably scaled version of $p$ is a $\CC$-eigenpairing of $g(0)B$ and $C_{\alpha_0}$, and similarly that a scaled version of $p^{-1}$ is a $\CC$-eigenpairing of $C_{\alpha_0}$ and $g(1)B$. Then by composing these maps via Lemma \ref{Concatenation}, we obtain the desired result.
\end{proof}

We may now complete the proof of Theorem \ref{ConvexReductionThm}.

\begin{proof}
Let $\{\gamma_1, ..., \gamma_n\}$ be a convex eigenpath set corresponding to a convex eigenpairing $p$. First observe that if there is some $\alpha_0 \in [0,1]$ such that $f(\alpha_0) = 0 = g(\alpha_0)$, then $\CC_{\alpha_0} = 0$ and so every bijection is a $\CC$-eigenpairing by Lemma \ref{AllBijectionsCor}. Henceforth we will assume there is no such $\alpha_0$.

By Corollary \ref{CommonPairingCor}, there is a $\delta>0$ such that for all continuous paths $\CC_\alpha'$ from $A$ to $B$ satisfying $\|\CC_\alpha - \CC_\alpha'\|  < \delta$ for all $\alpha \in [0,1]$, any $\CC'$-eigenpairing $p : \spec(A) \to \spec(B)$ is also a $\CC$-eigenpairing. Let $U$ be the union of all intervals $(\alpha_1, \alpha_2) \subset [0,1]$ on which $f$ is negative and such that $f(\alpha_1) = 0 = f(\alpha_2)$ and $f(\alpha) > -\frac{\delta}{\|A\|}$ for all $\alpha \in (\alpha_1, \alpha_2)$. Similarly, let $V$ be the union of all intervals $(\alpha_1, \alpha_2) \subset [0,1]$ on which $g$ is negative and such that $g(\alpha_1) = 0 = g(\alpha_2)$ and $g(\alpha) > -\frac{\delta}{\|B\|}$ for all $\alpha \in (\alpha_1, \alpha_2)$. If we define $f'=f$ on $[0,1]\backslash U$ and $f'=0$ on $U$ and similarly $g'=g$ on $[0,1]\backslash V$ and $g'=0$ on $V$, then the resulting path $\CC'_\alpha = f'(\alpha)A + g'(\alpha)B$ satisfies $\|\CC_\alpha - \CC_\alpha'\|  < \delta$ for all $\alpha \in [0,1]$. It therefore suffices to show that any convex eigenpairing $p$ is also a $\CC'$-eigenpairing.

Let $(\alpha_1, \alpha_2) \subset [0,1]$ be an interval on which $f'$ is negative and such that $f'(\alpha_1) = 0 = f'(\alpha_2)$. Then $g'(\alpha) \geq 0$ for all $\alpha \in (\alpha_1, \alpha_2)$, so by Lemma \ref{ArcLem} we know that a scaled version of the identity map is a $\CC'$-eigenpairing between $g'(\alpha_1)B$ and $g'(\alpha_2)B$. This bijection would be the only $\CC'$-eigenpairing if we redefined $f'$ to be identically zero and $g'$ to be the straight line between $g'(\alpha_1)B$ and $g'(\alpha_2)B$ on $(\alpha_1, \alpha_2)$, so we may assume without loss of generality that $f'$ is nonzero on this interval.

By our construction of $\CC'$, we know there are finitely many such intervals on which $f'$ is negative. We may therefore repeat this process for all these intervals, so in fact we may assume that $f'(\alpha) \geq 0$ for all $\alpha \in [0,1]$. By an identical argument, we may assume that $g'(\alpha) \geq 0$ as well. Thus, the theorem follows by Lemma \ref{ConeLem}.
\end{proof}


\section{The $2$-by-$2$ Case of Convex Eigenpairings}\label{Section2x2}
Before we investigate the $2$-by-$2$ case, we will first prove a few results that apply to the general convex case. First, we determine under what conditions there exist \emph{straight line} convex eigenpaths, i.e.\ paths that are degree-one polynomials in $\alpha$.

\begin{Lemma}\label{SameEvecsLem}
If $\lambda \in \sigma(A)$ and $\mu \in \sigma(B)$ share an eigenvector $v \in \C^n$, then the straight line given by $\gamma(\alpha) = (1-\alpha)\lambda + \alpha \mu$ is a convex eigenpath.
\end{Lemma}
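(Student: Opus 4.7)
The proof should be essentially a one-line verification together with an appeal to Proposition \ref{PathExtProp}. My plan is as follows.

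First, I would compute directly the action of $C_\alpha = (1-\alpha)A + \alpha B$ on the shared eigenvector $v$. Since $Av = \lambda v$ and $Bv = \mu v$, linearity gives
\[
C_\alpha v = (1-\alpha)Av + \alpha Bv = (1-\alpha)\lambda v + \alpha \mu v = \bigl((1-\alpha)\lambda + \alpha\mu\bigr)v = \gamma(\alpha) v.
\]
Because $v \neq 0$, this shows $\gamma(\alpha) \in \sigma(C_\alpha)$ for every $\alpha \in [0,1]$.

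Second, I would observe that $\gamma$ is clearly continuous (in fact, affine) as a function $[0,1] \to \C$. So $\gamma$ is a continuous function whose value at every $\alpha$ lies in $\sigma(C_\alpha)$, which is exactly the hypothesis of Proposition \ref{PathExtProp} with $k=1$. That proposition then produces continuous functions $\gamma_2,\dots,\gamma_n$ such that $\{\gamma,\gamma_2,\dots,\gamma_n\}$ is a convex eigenpath set; in particular, $\gamma$ is a convex eigenpath, which is the claim.

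There is essentially no obstacle here: the only subtle point is the convention that a $C$-eigenpath is by definition a member of some $C$-eigenpath set, rather than merely a continuous selection of eigenvalues. That gap is precisely what Proposition \ref{PathExtProp} is designed to close, so invoking it finishes the argument cleanly.
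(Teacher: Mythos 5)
Your proof is correct and matches the paper's approach: the paper's proof is the same one-line verification that $C_\alpha v = \gamma(\alpha)v$, and your explicit appeal to Proposition~\ref{PathExtProp} is exactly the justification the paper itself points to (in the remark preceding that proposition) for why a continuous eigenvalue selection qualifies as an eigenpath.
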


\begin{proof}
We have $C_\alpha v = ((1-\alpha)\lambda + \alpha \mu)v$, so $\gamma$ is indeed an eigenpath.
\end{proof}

\begin{Lemma}\label{TraceCondition}
Let $\{\gamma_1, ..., \gamma_n\}$ be an eigenpath set. Then the pointwise sum $\gamma = \sum_{j=1}^n \gamma_j$ is a straight line.
\end{Lemma}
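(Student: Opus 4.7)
The plan is essentially to invoke the standard identity that the trace of a matrix equals the sum of its eigenvalues (counted with multiplicity), and then to observe that the trace is linear in the matrix entries, hence linear in $\alpha$ along the convex path $C_\alpha = (1-\alpha)A + \alpha B$.

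More precisely, first I would note that for each fixed $\alpha \in [0,1]$, the defining property of an eigenpath set gives the multiset equality $\spec(C_\alpha) = \{\gamma_1(\alpha), \ldots, \gamma_n(\alpha)\}$. Summing over this multiset yields
\[
\sum_{j=1}^n \gamma_j(\alpha) = \Tr(C_\alpha).
\]
Next, I would exploit linearity of the trace: since $C_\alpha = (1-\alpha)A + \alpha B$, we get
\[
\Tr(C_\alpha) = (1-\alpha)\Tr(A) + \alpha \Tr(B),
\]
which is an affine (degree at most one) function of $\alpha$. Writing this as $\Tr(A) + \alpha(\Tr(B) - \Tr(A))$ exhibits it as a straight line in $\C$ parameterized by $\alpha \in [0,1]$, with endpoints $\Tr(A)$ and $\Tr(B)$.

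There is essentially no obstacle here: the result is immediate from the trace identity and the linearity of $C_\alpha$ in $\alpha$. The only small thing worth stating explicitly is that the convention ``straight line'' in this paper means a degree-one (affine) polynomial in $\alpha$, matching the terminology set up just before the lemma in Lemma \ref{SameEvecsLem}; this is exactly the form we obtain. Note that the lemma as stated is implicitly about the convex eigenpath set (since the word \emph{straight line} was defined in that context), so no further subtlety arises.
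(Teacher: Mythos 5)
Your proof is correct and uses exactly the same argument as the paper: the pointwise sum of the eigenpaths equals $\Tr(C_\alpha)$, and by linearity of the trace this is $(1-\alpha)\Tr(A) + \alpha\Tr(B)$, an affine function of $\alpha$.
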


\begin{proof}
Notice that $\gamma(\alpha) = \Tr(C_\alpha) = (1-\alpha)\Tr(A) + \alpha \Tr(B)$, so $\gamma$ is indeed a straight line.
\end{proof}

\begin{Rmk}
In view of Lemma \ref{Combination}, we see that if $A$ and $B$ share some $k$ linearly independent eigenvectors, then we may reduce the convex eigenpairing problem to the $(n-k)$-by-$(n-k)$ case by simply using a basis in which $A$ and $B$ are block upper triangular. In particular, there will be $k$ blocks of size $1$-by-$1$, each of which corresponds to a shared eigenvector. The remaining $(n-k)$-by-$(n-k)$ block may then be treated separately.
\end{Rmk}

We now proceed to our analysis of the $2$-by-$2$ case: Suppose that $A$ and $B$ are $2$-by-$2$ complex matrices with $\spec(A) = \{\lambda_1, \lambda_2\}$ and $\spec(B) = \{\mu_1, \mu_2\}$. Notice that the possible eigenpairings are $\lambda_j \mapsto \mu_j$ (denoted by $p$) and $\lambda_j \mapsto \mu_{3-j}$ (denoted by $q$). In the remainder of this section, we will analyze the conditions under which each of these eigenpairings may occur.

If either has an eigenvalue of algebraic multiplicity 2, then both $p$ and $q$ are convex eigenpairings of $A$ and $B$. We may therefore restrict our attention to the case in which $\lambda_1 \neq \lambda_2$ and $\mu_1 \neq \mu_2$. By uniform similarity invariance, it follows that we may reduce to the case in which $A$ is diagonal, so its eigenvectors are $\binom{1}{0}$ and $\binom{0}{1}$. Further, we will assume that the eigenvectors of $B$ are $\binom{v_1}{1}$ and $\binom{v_2}{1}$, corresponding to $\mu_1$ and $\mu_2$, respectively. Later we will address the remaining (trivial) case in which $B$ has an eigenvector of the form $\binom{v}{0}$.

To simplify notation later in this section, we will write $\lambda = \lambda_1-\lambda_2$ and $\mu = \mu_1-\mu_2$. Since pairings are invariant under shifts by the identity matrix (Lemma \ref{PosScalLem}), we note that these quantities have a natural invariance property. We therefore obtain
\begin{align*}
    A &= \begin{pmatrix} 
            \lambda_1 & 0 \\
            0 & \lambda_2
         \end{pmatrix} \\
    B &= \begin{pmatrix} 
            v_1 & v_2 \\
            1 & 1
         \end{pmatrix}
         \begin{pmatrix} 
            \mu_1 & 0 \\
            0 & \mu_2
         \end{pmatrix}
         \begin{pmatrix} 
            v_1 & v_2 \\
            1 & 1
         \end{pmatrix}^{-1} \\
      &= \begin{pmatrix} 
            \frac{\mu_1v_1}{v_1-v_2} - \frac{\mu_2v_2}{v_1-v_2} & -\frac{\mu v_1 v_2}{v_1-v_2} \\
            \frac{\mu}{v_1-v_2} & \frac{\mu_2v_1}{v_1-v_2} -\frac{\mu_1v_2}{v_1-v_2}
         \end{pmatrix} \\
    C_\alpha &= \begin{pmatrix} 
            (1-\alpha)\lambda_1 + \alpha \left( \frac{\mu_1v_1}{v_1-v_2} - \frac{\mu_2v_2}{v_1-v_2} \right) & -\frac{\alpha\mu v_1 v_2}{v_1-v_2} \\
            \frac{\alpha\mu}{v_1-v_2} & (1-\alpha)\lambda_2 + \alpha \left( \frac{\mu_2v_1}{v_1-v_2} -\frac{\mu_1v_2}{v_1-v_2} \right)
         \end{pmatrix}.
\end{align*}
The characteristic polynomial of $C_\alpha$ then has roots
\begin{align*}
    \frac{(1-\alpha)(\lambda_1+\lambda_2) + \alpha(\mu_1+\mu_2) \pm \sqrt{(1-\alpha)^2\lambda^2 + \alpha^2\mu^2 + 2\left(\frac{v_1+v_2}{v_1-v_2} \right)(1-\alpha)\alpha \lambda \mu}}{2},
\end{align*}
so a repeated root occurs precisely when the discriminant $\gamma : [0, 1] \to \C$ given by
\begin{align*}
    \gamma(\alpha) = (1-\alpha)^2\lambda^2 + \alpha^2\mu^2 + 2\left(\frac{v_1+v_2}{v_1-v_2} \right)(1-\alpha)\alpha \lambda \mu
\end{align*}
equals 0 for some $\alpha \in (0,1)$, which is true if and only if
\begin{align}
    \frac{\mu}{\lambda} = \left(\frac{\alpha-1}{\alpha}\right) \left( \frac{v_1+v_2 \pm 2\sqrt{v_1v_2}}{v_1-v_2} \right) \label{JCond}
\end{align}
for that value of $\alpha$.

\begin{Rmk}\label{SqrtPaths}
If the $\gamma(\alpha) \neq 0$ for all $\alpha \in [0,1]$, then by Proposition \ref{AmbiguityProp} there is an unambiguous eigenpairing. Further, there are two distinct continuous ``square root paths" $\eta_1, \eta_2 : [0,1] \to \C$ such that $\eta_1(\alpha)^2 = \gamma(\alpha) = \eta_2(\alpha)^2$ for all $\alpha \in [0,1]$. These paths connect either $\lambda$ to $\mu$ and $-\lambda$ to $-\mu$ or $\lambda$ to $-\mu$ and $-\lambda$ to $\mu$.

If one of these paths (say $\eta_1$) connects $\lambda$ to $\mu$, then one of the eigenpaths that induces the unambiguous eigenpairing is $\gamma_1(\alpha) = \frac{(1-\alpha)(\lambda_1+\lambda_2) + \alpha(\mu_1+\mu_2) + \eta_1(\alpha)}{2}$, so $\gamma_1$ is a path from $\lambda_1$ to $\mu_1$. In this case, it follows that $p$ is the unambiguous eigenpairing. Similarly, if one of the square root paths connects $\lambda$ to $-\mu$, then $q$ is the unambiguous eigenpairing.

If we fix values of $v_1$ and $v_2$ but allow $\frac{\mu}{\lambda}$ to vary, we therefore see that the eigenpaths may swap only when $\gamma$ hits the origin. That is, if $\gamma$ does not hit the origin at some point in a continuous perturbation of $\frac{\mu}{\lambda}$, then the endpoints of the square root paths $\eta_1$ and $\eta_2$ remain the same.
\end{Rmk}

\begin{Lemma}\label{OriginOrNoLem}
Suppose $\frac{\mu}{\lambda} \in \R$, so there is a ray $R$ emanating from the origin that contains $\mu^2$ and $\lambda^2$. Denote by $-R$ the ``opposite ray," the one emanating from the origin that contains $-\mu^2$ and $-\lambda^2$. If the discriminant $\gamma : [0,1] \to \C$ has no roots, then $\gamma$ never hits any point in $-R$.
\end{Lemma}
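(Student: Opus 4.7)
The plan is to normalize the discriminant $\gamma$ so that the condition ``$\gamma(\alpha)\in -R$'' reduces to a statement about a single polynomial in $\alpha$, and then to split into two cases depending on whether an auxiliary complex parameter is real.

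First, set $s = \mu/\lambda$, which by hypothesis lies in $\R\setminus\{0\}$ (nonzero since $\mu_1 \neq \mu_2$ forces $\mu \neq 0$), and set $k = (v_1+v_2)/(v_1-v_2) \in \C$. Dividing the given expression for $\gamma(\alpha)$ through by $\lambda^2$, define
$$h(\alpha) = \gamma(\alpha)/\lambda^2 = (1-\alpha)^2 + s^2\alpha^2 + 2ks(1-\alpha)\alpha,$$
so that $h(0) = 1$ and $h(1) = s^2$, both strictly positive. Since $R = \{t\lambda^2 : t \geq 0\}$ by definition, the condition $\gamma(\alpha) \in -R$ is equivalent to $h(\alpha) \in \R_{\leq 0}$; together with the no-roots hypothesis (which gives $h(\alpha) \neq 0$), it then suffices to show that $h(\alpha) \notin \R_{<0}$ for all $\alpha \in [0,1]$.

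The key observation is the imaginary part calculation: because $s$ is real, direct inspection gives
$$\operatorname{Im} h(\alpha) = 2s\,\operatorname{Im}(k)\,\alpha(1-\alpha).$$
This splits the argument into two cases. If $\operatorname{Im}(k) \neq 0$, then for every $\alpha \in (0,1)$ we have $\operatorname{Im} h(\alpha) \neq 0$ (since $s \neq 0$ and $\alpha(1-\alpha) > 0$), so $h(\alpha) \notin \R$; combined with the positive endpoint values $h(0) = 1$ and $h(1) = s^2$, this shows $h$ avoids $\R_{<0}$ on all of $[0,1]$, and in fact the no-roots hypothesis is not even needed here. If $\operatorname{Im}(k) = 0$, then $h$ is a continuous real-valued function on $[0,1]$ with $h(0) = 1 > 0$, so any negative value of $h$ would, by the intermediate value theorem, produce a zero of $h$ (equivalently of $\gamma$) in $[0,1]$, contradicting the hypothesis.

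I do not anticipate a serious obstacle. The essential content is the short imaginary-part computation, and the two-case split it forces is handled by elementary facts (non-membership in $\R$ in one case, the intermediate value theorem in the other). The only subtlety worth flagging is the normalization step: dividing by $\lambda^2$ identifies $R$ with the positive real ray and $-R$ with the negative real ray, which is what makes the imaginary-part argument directly applicable.
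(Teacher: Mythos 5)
Your proof is correct and follows essentially the same route as the paper's: both split on whether $\operatorname{Im}\bigl((v_1+v_2)/(v_1-v_2)\bigr)$ vanishes, and each case is handled by the same idea—an imaginary-part computation to keep $\gamma$ off the line entirely, or a connectedness/intermediate-value argument when $\gamma$ stays on the line. The only cosmetic difference is that you normalize by $\lambda^2$ to identify $R$ with the positive real axis, while the paper works directly with $\R$-linear combinations of $\lambda^2$, $\mu^2$, $\lambda\mu$ along the line $L = R \cup (-R)$.
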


\begin{proof}
Let $L = R \cup -R$ be the line containing 0, $\mu^2,$ and $\lambda^2$. Notice that $\lambda \mu \in L$, so $\gamma(\alpha)$ is a $\C$-linear combinations of $\lambda^2, \mu^2,$ and $\lambda \mu$. If $\frac{v_1+v_2}{v_1-v_2} \in \R$, then this combination is $\R$-linear, so the image of $\gamma$ lies in $L$. In this case, since $\gamma$ has no roots, we know that $\gamma$ can never cross the origin and hit $-R$.

Otherwise $\frac{v_1+v_2}{v_1-v_2} \notin \R$, in which case the image of $\gamma$ lies in $L$ only for $\alpha=0,1$. Therefore $\gamma$ does not hit $-R$ in this case either.
\end{proof}

\begin{Lemma}\label{RealRatioLem}
Suppose $\frac{\mu}{\lambda} \in \R$.
\begin{enumerate}
    \item If $\frac{\mu}{\lambda} > 0$, then $p$ is an eigenpairing.
    \item If $\frac{\mu}{\lambda} < 0$, then $q$ is an eigenpairing.
\end{enumerate}
\end{Lemma}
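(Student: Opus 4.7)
The strategy is to use the discriminant $\gamma$ together with the square-root-path description of the unambiguous convex eigenpairing given in Remark~\ref{SqrtPaths}. I first dispense with degenerate cases: if $\gamma(\alpha_0) = 0$ for some $\alpha_0 \in [0,1]$, then $C_{\alpha_0}$ has a repeated eigenvalue, so by Proposition~\ref{AmbiguityProp} there is an ambiguity, and the path-switching construction from the proof of that proposition (specifically the (a)$\implies$(b) direction) produces both $p$ and $q$ as convex eigenpairings; the conclusion of the lemma is then trivial. The remaining degenerate situation in which $B$ has an eigenvector of the form $\binom{v}{0}$ is handled directly using Lemma~\ref{SameEvecsLem}, which provides straight-line eigenpaths whose endpoints can be read off by inspection.

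In the main case I therefore assume $\gamma(\alpha) \neq 0$ on $[0,1]$ and the generic eigenvector setup of the section. By Proposition~\ref{AmbiguityProp} there is an unambiguous convex eigenpairing, and by Remark~\ref{SqrtPaths} it is $p$ precisely when a continuous square-root lift $\eta$ of $\gamma$ with $\eta(0) = \lambda$ satisfies $\eta(1) = \mu$, and is $q$ precisely when $\eta(1) = -\mu$. The key input for constructing and controlling this lift is Lemma~\ref{OriginOrNoLem}: since $\mu/\lambda \in \R$ and $\gamma$ has no roots, $\gamma$ avoids the ray $-R$ emanating from $0$ opposite to the ray $R$ containing $\lambda^2$ and $\mu^2$. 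The complement $\C \setminus (-R)$ is simply connected and omits the origin, so it admits a continuous square-root branch; I normalize it so that $\sqrt{\lambda^2} = \lambda$ and define $\eta$ as its composition with $\gamma$.

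It remains to compute $\eta(1)$. Writing $\mu = c\lambda$ with $c \in \R \setminus \{0\}$, we have $\mu^2 = c^2 \lambda^2$, which lies on the ray $R$ on the same side of $0$ as $\lambda^2$. The chosen branch sends any point $t\lambda^2$ with $t > 0$ to $\sqrt{t}\,\lambda$, so $\sqrt{\mu^2} = |c|\lambda$. Hence in case~(a) we have $c > 0$ and $\eta(1) = c\lambda = \mu$, yielding $p$; in case~(b) we have $c < 0$ and $\eta(1) = -c\lambda = -\mu$, yielding $q$.

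I foresee no serious obstacle. The only subtlety is that the branch cut must be placed along $-R$ (which rotates with $\lambda$) rather than, say, the standard negative real axis, and this placement is precisely what Lemma~\ref{OriginOrNoLem} was designed to enable.
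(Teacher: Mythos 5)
Your proof is correct and follows essentially the same route as the paper: dispose of the case where $\gamma$ vanishes, then use Lemma~\ref{OriginOrNoLem} to place a branch cut along $-R$ and track the endpoints of the resulting square-root lift via Remark~\ref{SqrtPaths}. You spell out the endpoint computation $\sqrt{\mu^2}=|c|\lambda$ more explicitly than the paper (which just asserts the square-root paths start and end on the same ray and says ``Case (b) is identical''), but the substance and key lemmas invoked are the same.
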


\begin{proof}
If $\gamma$ has any roots, then both $p$ and $q$ are eigenpairings and we are finished. Otherwise $\gamma$ has no roots, so we may apply Lemma \ref{OriginOrNoLem} to find that $\gamma$ never hits $-R$. Thus, when finding the square root paths of Remark \ref{SqrtPaths}, we may use the branch cut along the ray $-R$.

First suppose $\frac{\mu}{\lambda} > 0$. Then $\mu$ and $\lambda$ both lie on the same ray emanating from 0, meaning so too do $\mu^2$ and $\lambda^2$. Then each square root path must start and end on the same ray, so one such path traverses from $\lambda$ to $\mu$. By our reasoning in Remark \ref{SqrtPaths}, it follows that $p$ is an eigenpairing. Case (b) is identical.
\end{proof}

In the next several paragraphs, we will assume that arguments of complex numbers lie in $(-\pi, \pi]$. Denote by $\theta$ and $\theta'$ the arguments of $\left(\frac{\alpha-1}{\alpha}\right) \left( \frac{v_1+v_2 \pm 2\sqrt{v_1v_2}}{v_1-v_2} \right)$, where $\theta' \leq \theta$. Notice that this quantity equals 0 (and thus has no argument) only when $v_1=v_2=0$, which would imply that the two distinct eigenvalues of $B$ share an eigenvector.

\begin{Lemma}\label{2x2SymmetryLem}
Either $\theta' = -\theta$ or $\theta = \pi = \theta'$.
\end{Lemma}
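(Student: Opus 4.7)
The plan is to exploit a multiplicative identity between the two $\pm$ expressions, which reduces the claim to an elementary observation about arguments of complex numbers whose product is a positive real.

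First I would set $w_\pm = \frac{v_1+v_2 \pm 2\sqrt{v_1v_2}}{v_1-v_2}$ and note the algebraic fact
\begin{align*}
w_+ w_- = \frac{(v_1+v_2)^2 - 4v_1 v_2}{(v_1-v_2)^2} = \frac{(v_1-v_2)^2}{(v_1-v_2)^2} = 1.
\end{align*}
This is essentially the only computation the proof requires.

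Next, for $\alpha \in (0,1)$ the factor $\frac{\alpha-1}{\alpha}$ is a negative real, so the two complex numbers $z_\pm = \left(\frac{\alpha-1}{\alpha}\right) w_\pm$ whose arguments are $\theta$ and $\theta'$ (in some order) have product
\begin{align*}
z_+ z_- = \left(\frac{\alpha-1}{\alpha}\right)^{\!2} w_+ w_- = \left(\frac{\alpha-1}{\alpha}\right)^{\!2} > 0.
\end{align*}
Because $z_+ z_-$ is a positive real, the sum of its factors' arguments satisfies $\theta + \theta' \equiv 0 \pmod{2\pi}$.

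Finally I would close with a short case analysis using the convention that arguments lie in $(-\pi, \pi]$. Since $\theta, \theta' \in (-\pi, \pi]$, their sum lies in $(-2\pi, 2\pi]$, and the congruence above forces this sum to be either $0$ or $2\pi$. The first case gives $\theta' = -\theta$ immediately, with the inequality $\theta' \leq \theta$ just fixing the labeling of the two roots. The second case can occur only when each of $\theta$ and $\theta'$ equals $\pi$, since each is at most $\pi$ and they must sum to $2\pi$; in that situation $\theta = \pi = \theta'$, precisely the second alternative in the lemma. The only real obstacle is noticing the identity $w_+ w_- = 1$; once this is in hand the argument is routine.
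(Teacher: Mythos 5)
Your proposal is correct and uses the same key computation as the paper (the identity $w_+ w_- = 1$); the paper's proof ends right after this computation and leaves the deduction about arguments implicit, whereas you spell out the case analysis on $\theta + \theta' \equiv 0 \pmod{2\pi}$ and the role of the $\frac{\alpha-1}{\alpha}$ prefactor. Same approach, with slightly more detail in the conclusion.
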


\begin{proof}
We have
\begin{align*}
    \left( \frac{v_1+v_2 + 2\sqrt{v_1v_2}}{v_1-v_2} \right)\left( \frac{v_1+v_2 - 2\sqrt{v_1v_2}}{v_1-v_2} \right) &= \frac{(v_1+v_2)^2 - 4v_1v_2}{(v_1-v_2)^2} \\
    &= \frac{(v_1-v_2)^2}{(v_1-v_2)^2} \\
    &= 1,
\end{align*}
so the lemma holds.
\end{proof}

\begin{usethmcounterof}{2x2Theorem}
Define $\mu, \lambda,$ and $\theta$ as above.
\begin{enumerate}
    \item If $|\arg(\frac{\mu}{\lambda})| = \theta$, then both $p$ and $q$ are eigenpairings.
    
    \item If $|\arg(\frac{\mu}{\lambda})| < \theta$, then only $p$ is an eigenpairing.
    
    \item If $|\arg(\frac{\mu}{\lambda})| > \theta$, then only $q$ is an eigenpairing.
\end{enumerate}
\end{usethmcounterof}

\begin{proof}
Notice that condition (a) is equivalent to equation \eqref{JCond} from before, so (a) indeed holds. Now fix some value of $\theta$, and consider the change in the discriminant path $\gamma$ as we continuously shift $\arg(\frac{\mu}{\lambda})$ from $-\pi$ to $\pi$.

If $\theta=0$, then $\gamma$ hits the origin only when $\arg(\frac{\mu}{\lambda}) = 0$, so by our reasoning in Remark \ref{SqrtPaths} it follows that the endpoints of the square root paths may swap at most once through a rotation of $\frac{\mu}{\lambda}$ by $2\pi$. However, a full rotation by $2\pi$ is equivalent to no rotation at all, so in fact the endpoints cannot swap. We know by Lemma \ref{RealRatioLem} that $q$ is an eigenpairing at $\arg(\frac{\mu}{\lambda}) = \pi$ as well, so it is therefore always an eigenpairing in the case that $\theta=0$. It follows by an identical argument that $p$ is always an eigenpairing when $\theta = \pi$.

Otherwise $0 < \theta < \pi$, so the endpoints of the square root paths may swap when $\arg(\frac{\mu}{\lambda}) = \pm \theta$. Again by Lemma \ref{RealRatioLem} we obtain that $q$ is an eigenpairing at $\arg(\frac{\mu}{\lambda}) = \pi$ and $p$ is an eigenpairing at $\arg(\frac{\mu}{\lambda}) = 0$, so in fact the endpoints must swap at $\pm \theta$. Since they cannot swap anywhere else, we are finished.
\end{proof}

\begin{Rmk}
We remark that if $v_1$ has very large magnitude and $v_2$ is very small, then the quantity $\left(\frac{\alpha-1}{\alpha}\right) \left( \frac{v_1+v_2 \pm 2\sqrt{v_1v_2}}{v_1-v_2} \right)$ will be close to the negative real axis, so $\theta$ will be close to $\pi$. In this case, event (b) in the above theorem is likely to occur, so the map $p$ given by $\lambda_j \mapsto \mu_j$ is the most likely eigenpairing. Conversely, if $v_2$ is large and $v_1$ small, then $q$ is the most likely eigenpairing. In this sense, we see that when an eigenvector of $A$ is close to an eigenvector of $B$, their corresponding eigenvalues are likely to pair.

In particular, if $\binom{1}{0}$ lies in the $\mu_1$-eigenspace of $B$, then $p$ is an eigenpairing due to Lemma \ref{SameEvecsLem}. Further, by Lemma \ref{TraceCondition} we may take the associated eigenpath set to be a pair of straight lines. In this case, $q$ is also an eigenpairing if and only if there is some $\alpha \in (0,1)$ such that $(1-\alpha) \lambda_1 + \alpha \mu_1 = (1-\alpha) \lambda_2 + \alpha \mu_2$, or equivalently $\frac{\mu}{\lambda} < 0$.

Similarly, if $\binom{1}{0}$ lies in the $\mu_2$-eigenspace of $B$, then $q$ is always an eigenpairing, and $p$ is also an eigenpairing if and only if $\frac{\mu}{\lambda} > 0$.
\end{Rmk}

\section{The Polynomial Analogs of Theorems 1-3}
A natural extension of the ideas considered so far is the variation in the roots of a complex polynomial of degree $n$ that varies continuously with a real parameter $\alpha$. In order to state the analogs of Theorems 1-3, we first provide some analogous definitions.

We again use $\spec(Q)$ to denote the size-$n$ multiset of roots of the polynomial $Q$, where $Q \in \C[t]$ has degree $n$. Additionally, we write $\|Q\| = \max_j |a_j|$ when $Q = \sum_{j=0}^n a_j t^j$.

\begin{Def}
A \emph{polynomial path} is a continuous function $[0,1] \to \C_n[t]$, where $\C_n[t]$ denotes the set of univariate complex monic polynomials of degree $n$. In this section, we will assume that $\PP_\alpha$ is a polynomial path with $\PP_0 = Q$ and $\PP_1 = R$. The $\emph{convex}$ polynomial path from $Q$ to $R$ is given by $P_\alpha = (1-\alpha)Q + \alpha R$.
\end{Def}

\begin{Def}
As before, the polynomial path $\PP_\alpha$ determines a \emph{$\PP$-root-region}
\begin{align*}
    E_\PP = \bigcup_{\alpha \in [0,1]} \{(\zeta, \alpha) \mid \PP_\alpha(\zeta) = 0\}
\end{align*}
and corresponding \emph{$\PP$-root-path sets} $\{\gamma_1, ..., \gamma_n\}$ such that
\begin{align*}
    \spec(\PP_\alpha) = \{\gamma_1(\alpha), ..., \gamma_n(\alpha)\}
\end{align*}
for all $\alpha \in [0,1]$. Such a $\PP$-root-path set determines a \emph{$\PP$-root-pairing} $\gamma_j(0) \mapsto \gamma_j(1)$.
\end{Def}

\begin{Def}
The \emph{companion matrix path} of $\PP_\alpha = t^n + \sum_{j=0}^{n-1} a_j(\alpha) t^j$ is the matrix path
\begin{align*}
    \CC_\alpha = \begin{bmatrix}
    0 & & & & -a_0(\alpha) \\
    1 & 0 & & & -a_1(\alpha) \\
     & 1 & \ddots & & \vdots \\
     & & \ddots & 0 & -a_{n-2}(\alpha) \\
     0 & & & 1 & -a_{n-1}(\alpha)
    \end{bmatrix},
\end{align*}
so the characteristic polynomial of each $\CC_\alpha$ is $\PP_\alpha$ (see \cite{MAnalysis}). This observation yields the following lemma.
\end{Def}

\begin{Lemma}
Let $\CC$ be the companion matrix path of $\PP$. Then $E_\CC = E_\PP$, and $\{\gamma_1, ..., \gamma_n\}$ is a $\CC$-eigenpath set if and only if it is a $\PP$-root-path set.
\end{Lemma}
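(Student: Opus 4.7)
The plan is to unpack the definitions and use the one nontrivial input, namely the classical fact (cited in the definition) that the characteristic polynomial $\chi_{\CC_\alpha}(t)$ of the companion matrix $\CC_\alpha$ equals $\PP_\alpha(t)$ for every $\alpha$. Once this is established, the two statements of the lemma reduce to a single equality of multisets at each parameter value.

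First I would record that, because $\chi_{\CC_\alpha} = \PP_\alpha$ as elements of $\C[t]$, the size-$n$ multiset of eigenvalues of $\CC_\alpha$ (counted with algebraic multiplicity, i.e.\ with multiplicity as roots of $\chi_{\CC_\alpha}$) coincides with the size-$n$ multiset of roots of $\PP_\alpha$. In our notation this is the multiset equality $\spec(\CC_\alpha) = \spec(\PP_\alpha)$, valid for every $\alpha \in [0,1]$.

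Next, the equality $E_\CC = E_\PP$ is immediate from the definitions: both sets are
\[
\bigcup_{\alpha \in [0,1]} \{(\lambda, \alpha) \mid \lambda \in \spec(\CC_\alpha)\} \;=\; \bigcup_{\alpha \in [0,1]} \{(\zeta, \alpha) \mid \PP_\alpha(\zeta) = 0\},
\]
where the middle equality is just the pointwise multiset identity from the previous step.

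Finally, for the equivalence of $\CC$-eigenpath sets and $\PP$-root-path sets, I would observe that both notions are defined by exactly the same conditions on a collection of continuous functions $\gamma_1, \ldots, \gamma_n : [0,1] \to \C$: continuity on $[0,1]$, together with the multiset identity $\{\gamma_1(\alpha), \ldots, \gamma_n(\alpha)\} = \spec(\CC_\alpha)$ (respectively $\spec(\PP_\alpha)$) for all $\alpha$. Since these two spectra coincide as multisets, the two defining conditions are literally the same, giving the biconditional. There is no real obstacle here; the entire content of the lemma is the companion matrix identity $\chi_{\CC_\alpha} = \PP_\alpha$, which has already been invoked, so the proof amounts to a two-line verification.
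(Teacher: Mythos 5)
Your proof is correct and matches the paper's (implicit) reasoning exactly: the paper states this lemma without a separate proof, treating it as an immediate consequence of the companion-matrix identity $\chi_{\CC_\alpha} = \PP_\alpha$ noted in the preceding definition, which is precisely the observation you unpack.
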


We now state and prove the polynomial analogs of Theorems 1, 2, and 3.

\begin{Thm}[Analog of Theorem \ref{ClosePathsThm}]
Let $\varepsilon > 0$ and let $\PP$ be a polynomial path. Then there is a $\delta > 0$ such that for any polynomial path $\PP'$ with $\|\PP_\alpha - \PP_\alpha'\| < \delta$ for all $\alpha \in [0,1]$ and any $\PP'$-root-path set $\{\gamma_1', ..., \gamma_n'\}$, there is a $\PP$-root-path set $\{\gamma_1, ..., \gamma_n\}$ satisfying $|\gamma_j(\alpha) - \gamma_j'(\alpha)| < \varepsilon$ for all $\alpha \in [0,1]$.
\end{Thm}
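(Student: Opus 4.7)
The plan is to reduce this statement directly to Theorem \ref{ClosePathsThm} via the companion matrix correspondence already introduced, rather than reproving anything from scratch. Given the polynomial path $\PP$, I would form its companion matrix path $\CC$ as defined above, and apply Theorem \ref{ClosePathsThm} to $\CC$ with the given $\varepsilon$. This produces a $\delta > 0$ such that any matrix path $\CC''$ with $\|\CC_\alpha - \CC_\alpha''\| < \delta$ on $[0,1]$ admits, for every $\CC''$-eigenpath set, a $\CC$-eigenpath set that is uniformly $\varepsilon$-close.

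The next step is to check that this same $\delta$ serves as the $\delta$ required by the polynomial statement. For any polynomial path $\PP'$ with $\|\PP_\alpha - \PP_\alpha'\| < \delta$, I would construct its companion matrix path $\CC'$. Since both $\PP_\alpha$ and $\PP_\alpha'$ are monic of degree $n$, the matrices $\CC_\alpha$ and $\CC_\alpha'$ agree in every entry except in the last column, where the row $(j{+}1)$ entries are $-a_j(\alpha)$ and $-a_j'(\alpha)$ respectively. Under the paper's conventions $\|A\| = \max_{i,j}|a_{i,j}|$ for matrices and $\|Q\| = \max_j |a_j|$ for polynomials, this gives the exact equality
\begin{align*}
\|\CC_\alpha - \CC_\alpha'\| = \max_{0 \leq j \leq n-1} |a_j(\alpha) - a_j'(\alpha)| = \|\PP_\alpha - \PP_\alpha'\| < \delta
\end{align*}
for every $\alpha \in [0,1]$, so $\CC'$ qualifies as a matrix path to which the conclusion of Theorem \ref{ClosePathsThm} applies.

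Finally, I would invoke the Lemma stated just before the theorem, which identifies $\PP$-root-path sets with $\CC$-eigenpath sets (and likewise $\PP'$-root-path sets with $\CC'$-eigenpath sets). Given any $\PP'$-root-path set $\{\gamma_1', \ldots, \gamma_n'\}$, this Lemma makes it a $\CC'$-eigenpath set; Theorem \ref{ClosePathsThm} then hands back a $\CC$-eigenpath set $\{\gamma_1, \ldots, \gamma_n\}$ with $|\gamma_j(\alpha) - \gamma_j'(\alpha)| < \varepsilon$ for all $\alpha \in [0,1]$ and all $j$; and the Lemma, applied once more, re-identifies this as a $\PP$-root-path set, completing the argument.

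The only nontrivial point is the norm comparison between polynomial and companion matrix perturbations; all the deep content (the finite cover of singular ambiguities, the bound of Lemma \ref{EigenvalueDeltaLem}, the inductive piecewise construction of the matching eigenpaths) has already been packaged inside Theorem \ref{ClosePathsThm} and need not be revisited. I would therefore expect no real obstacle, since the companion matrix correspondence is an isometry between the natural norms on monic polynomials of fixed degree and the image slice of companion matrices in $M_n$.
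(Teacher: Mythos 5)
Your proposal is correct and takes essentially the same route as the paper's proof: pass to the companion matrix paths, apply Theorem \ref{ClosePathsThm} to the companion path of $\PP$, and translate back via the lemma identifying root-path sets with eigenpath sets. The only difference is that you explicitly verify the norm identity $\|\CC_\alpha - \CC_\alpha'\| = \|\PP_\alpha - \PP_\alpha'\|$ (which holds since the two monic polynomials differ only in the sub-leading coefficients, and these populate exactly the last column of the companion matrix), whereas the paper asserts $\|\CC_\alpha - \CC_\alpha'\| < \delta$ without comment; your extra line is a welcome clarification, not a deviation.
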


\begin{proof}
Let $\CC$ be the companion matrix path of $\PP$. Using Theorem 1, we obtain $\delta>0$ such that for any matrix path $\CC'$ with $\|\CC_\alpha - \CC_\alpha'\| < \delta$ for all $\alpha \in [0,1]$ and any $\CC'$-eigenpath set $\{\gamma_1', ..., \gamma_n'\}$, there is a $\CC$-eigenpath set $\{\gamma_1, ..., \gamma_n\}$ satisfying $|\gamma_j(\alpha) - \gamma_j'(\alpha)| < \varepsilon$ for all $\alpha \in [0,1]$.

Let $\PP'$ be a polynomial path satisfying $\|\PP_\alpha - \PP_\alpha'\| < \delta$, and let $\{\gamma_1', ..., \gamma_n'\}$ be a $\PP'$-root-path set. Then if $\CC'$ is the companion matrix path of $\PP'$, we have $\|\CC_\alpha - \CC_\alpha'\| < \delta$ and a $\CC'$-eigenpath set $\{\gamma_1', ..., \gamma_n'\}$. Thus we may find a $\CC$-eigenpath set $\{\gamma_1, ..., \gamma_n\}$, such that $|\gamma_j(\alpha) - \gamma_j'(\alpha)| < \varepsilon$ for all $\alpha \in [0,1]$. Since $\{\gamma_1, ..., \gamma_n\}$ is also a $\PP$-root-path set, we are finished.
\end{proof}

\begin{Thm}[Analog of Theorem \ref{RippingThm}]
Let $\varepsilon > 0$ and let $\PP$ be a polynomial path with $\PP$-root-path set $\{\gamma_1, ..., \gamma_n\}$. Then there is a polynomial path $\PP'$ admitting a unique $\PP'$-root-path set $\{\gamma_1', ..., \gamma_n'\}$ such that $\|\PP_\alpha - \PP_\alpha'\| < \varepsilon$ and $|\gamma_j(\alpha) - \gamma_j'(\alpha)| < \varepsilon$ for all $\alpha \in [0,1]$ and $j=1,...,n$.
\end{Thm}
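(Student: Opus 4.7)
The plan is to reduce the polynomial statement to its matrix counterpart by passing to companion matrices and then recovering the polynomial perturbation from the characteristic polynomial of the perturbed matrix path. Concretely, I would first let $\CC$ denote the companion matrix path of $\PP$, so that by the preceding lemma, $\{\gamma_1,\ldots,\gamma_n\}$ is already a $\CC$-eigenpath set. The goal is then to use Theorem \ref{RippingThm} to produce a nearby matrix path $\CC'$ with a unique eigenpath set close to $\{\gamma_j\}$, and to take $\PP'_\alpha$ to be the characteristic polynomial of $\CC'_\alpha$.

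The key auxiliary observation is that the map $\Phi: M_n \to \C_n[t]$ sending a matrix to its characteristic polynomial is a fixed polynomial function of the matrix entries (each coefficient of $\det(tI - M)$ is a signed sum of principal minors of $M$). In particular, $\Phi$ is uniformly continuous on any bounded subset of $M_n$. Since $\CC_\alpha$ traces out a compact subset of $M_n$ as $\alpha$ ranges over $[0,1]$, there is some $\varepsilon' > 0$, depending only on $\CC$ and $\varepsilon$, such that whenever $\|M - \CC_\alpha\| < \varepsilon'$ for some $\alpha$, we have $\|\Phi(M) - \PP_\alpha\| < \varepsilon$.

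With $\varepsilon'$ in hand, I would apply Theorem \ref{RippingThm} to $\CC$ with tolerance $\min(\varepsilon, \varepsilon')$ to obtain a matrix path $\CC'$ admitting a unique $\CC'$-eigenpath set $\{\gamma_1',\ldots,\gamma_n'\}$ satisfying $\|\CC_\alpha - \CC'_\alpha\| < \min(\varepsilon, \varepsilon')$ and $|\gamma_j(\alpha) - \gamma_j'(\alpha)| < \varepsilon$ for all $\alpha \in [0,1]$ and $j=1,\ldots,n$. Define $\PP'_\alpha = \Phi(\CC'_\alpha)$; this is continuous in $\alpha$ (since $\Phi$ is continuous and $\CC'$ is), monic of degree $n$, and satisfies $\|\PP_\alpha - \PP'_\alpha\| < \varepsilon$ by the choice of $\varepsilon'$. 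Finally, because $\PP'_\alpha$ is the characteristic polynomial of $\CC'_\alpha$, the multisets $\spec(\PP'_\alpha)$ and $\spec(\CC'_\alpha)$ coincide, so the $\PP'$-root-path sets are exactly the $\CC'$-eigenpath sets; uniqueness then transfers directly, and $\{\gamma_j'\}$ satisfies the desired proximity bound.

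The only mildly nontrivial point is the uniform estimate $\|\PP_\alpha - \PP'_\alpha\| < \varepsilon$, which is where one must be careful to fix $\varepsilon'$ before invoking Theorem \ref{RippingThm}; the compactness of $\CC([0,1])$ in $M_n$ is what makes this legitimate. Beyond that, the argument is a direct translation through the companion construction, and no new ideas beyond those already developed for the matrix case are required.
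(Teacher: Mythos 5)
Your argument follows the paper's own proof essentially verbatim: pass to the companion matrix path, invoke Theorem \ref{RippingThm} with a tolerance $\varepsilon'$ chosen small enough, and take $\PP'_\alpha$ to be the characteristic polynomial of $\CC'_\alpha$. You make explicit the uniform-continuity-on-a-compact-set argument that the paper compresses into ``we may take $\varepsilon'$ to be small enough,'' which is a worthwhile clarification but not a different approach.
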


\begin{proof}
Let $\CC$ be the companion matrix path of $\PP$. By Theorem \ref{RippingThm}, we know for any $\varepsilon' > 0$ there is a matrix path $\CC'$ admitting a unique $\CC'$-eigenpath set $\{\gamma_1', ..., \gamma_n'\}$ such that $\|\CC_\alpha - \CC'_\alpha\| < \varepsilon'$ and $|\gamma_j(\alpha) - \gamma_j'(\alpha)| < \varepsilon$ for $\alpha \in [0,1]$ and $j=1,...,n$. Since the characteristic polynomial of a matrix varies continuously with its entries, we may take $\varepsilon'$ to be small enough so that the characteristic polynomial $\PP_\alpha'$ of $\CC_\alpha'$ satisfies $\|\PP_\alpha - \PP_\alpha'\| < \varepsilon$. 
\end{proof}

\begin{Thm}[Analog of Theorem \ref{ConvexReductionThm}]
Let $f$ and $g$ be continuous functions $[0,1] \to \R$ satisfying $f(0) = g(1) = 1$ and $f(1) = g(0) = 0$ so that $\PP_\alpha = f(\alpha)Q+g(\alpha)R$ is a polynomial path from $Q$ to $R$. If $(f \lor g)(\alpha) \geq 0$ for all $\alpha \in [0,1]$, then any convex root-pairing $p$ is also a $\PP$-root-pairing.
\end{Thm}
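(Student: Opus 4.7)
The plan is to reduce directly to the matrix version, Theorem \ref{ConvexReductionThm}, via the companion-matrix device already used for the preceding two polynomial analogs. Write $A$ and $B$ for the companion matrices of $Q$ and $R$, and set $\CC_\alpha := f(\alpha)A + g(\alpha)B$. Since $\PP$ takes values in the monic polynomials $\C_n[t]$ and $\PP_\alpha = fQ+gR$ has leading coefficient $f(\alpha)+g(\alpha)$, we must have $f+g \equiv 1$ on $[0,1]$. A direct inspection of entries then shows that $\CC_\alpha$ has subdiagonal $1$'s (from $f+g=1$) and last-column entries $-(fq_j+gr_j)$, so $\CC_\alpha$ is precisely the companion matrix of $\PP_\alpha$. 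By the companion-matrix lemma stated just before the analog theorems, $\CC$-eigenpath sets coincide with $\PP$-root-path sets, hence $\CC$-eigenpairings coincide with $\PP$-root-pairings. Specializing the same observation to the convex path $(1-\alpha)Q+\alpha R$ identifies convex root-pairings of $Q,R$ with convex eigenpairings of $A,B$.

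I would then apply Theorem \ref{ConvexReductionThm} to the matrix path $\CC$. Its hypotheses are immediate: the boundary conditions $f(0)=g(1)=1$ and $f(1)=g(0)=0$ yield $\CC_0 = A$ and $\CC_1 = B$, and $(f \lor g)(\alpha)\geq 0$ carries over verbatim from the polynomial statement. Theorem \ref{ConvexReductionThm} then asserts that every convex eigenpairing of $A$ and $B$ is a $\CC$-eigenpairing. Combining this with the two identifications from the previous paragraph, every convex root-pairing of $Q,R$ is a $\PP$-root-pairing, which is exactly what we want.

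There is no serious obstacle here: the only step requiring any work is the companion-matrix identity $C_{fQ+gR} = fC_Q + gC_R$ when $f+g=1$, a one-line check on entries. A small curiosity worth flagging is that, under the forced constraint $f+g\equiv 1$, the hypothesis $(f \lor g)(\alpha)\geq 0$ is in fact automatic, since $\max(f,g) \geq \tfrac{1}{2}$; so the qualifying condition in the statement is vacuous in this monic setting, yet the polynomial analog still parallels the matrix statement, and the companion-matrix reduction above proves it directly.
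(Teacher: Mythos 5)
Your proof is correct and follows essentially the same companion-matrix reduction as the paper's: both identify $f(\alpha)A + g(\alpha)B$ as the companion matrix path of $\PP$ and then invoke Theorem~\ref{ConvexReductionThm} directly. Your explicit observation that monicity of $\PP_\alpha$ forces $f+g\equiv 1$, which both justifies the identity $\CC_\alpha = f(\alpha)A + g(\alpha)B$ for the companion matrix path and renders the hypothesis $(f \lor g) \geq 0$ automatic, is a nice detail the paper leaves implicit.
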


\begin{proof}
Let $\CC$ be the companion matrix path of $\PP$, so $\CC_\alpha = f(\alpha) A + g(\alpha) B$ where $A$ and $B$ are the companion matrices of $Q$ and $R$, respectively. Further, the matrix path $C_\alpha = (1-\alpha)A + \alpha B$ is the companion matrix path of $P_\alpha = (1-\alpha)A + \alpha B$. By Theorem \ref{ConvexReductionThm}, any convex eigenpairing $p$ of $A$ and $B$ is also a $\CC$-eigenpairing. Since $E_C = E_P$ and $E_\CC = E_\PP$, it follows that the convex eigenpairings are exactly the convex root-pairings, and similarly that the $\CC$-eigenpairings are exactly the $\PP$-root-pairings. Thus, our result holds.
\end{proof}

\pagebreak

        
        
        


        
        
        
        

\bibliographystyle{apa}
\bibliography{refs}

\end{document}